\newcommand{\C}{\mathbb{C}}
\newcommand{\HH}{\mathbb{H}}
\newcommand{\N}{\mathbb N}
\newcommand{\R}{\mathbb R}
\newcommand{\Sph}{\mathbb S}
\newcommand{\sff}{\mathrm{I\!I}}
\newcommand{\Ric}{\text{Ric}}
\newcommand{\loc}{\text{loc}}
\newcommand{\dive}{\text{div}}
\newcommand{\p}{\partial}
\newtheorem{theorem}{Theorem}[section]
\newtheorem{lemma}[theorem]{Lemma}
\newtheorem{corollary}[theorem]{Corollary}
\newtheorem{definition}[theorem]{Definition}
\newtheorem{remark}[theorem]{Remark}
\numberwithin{equation}{section}
\begin{document}

 \title[Stable Minimal Hypersurfaces in 4-Manifolds]{Free Boundary Stable Minimal Hypersurfaces in Positively Curved 4-Manifolds}

\author{Yujie Wu}

\email{yujiewu@stanford.edu}

	\begin{abstract}
		We show that the combination of nonnegative 2-intermediate Ricci Curvature and strict positivity of scalar curvature forces rigidity of  two-sided free boundary stable minimal hypersurface in a 4-manifold with bounded geometry and weakly convex boundary.

		This extends the method of Chodosh-Li-Stryker to free boundary minimal hypersurfaces in ambient manifolds with boundary.
	\end{abstract}
	
	\maketitle

	\section{Introdution}
	Recall that a free boundary minimal hypersurface $(M,\p M)$ in a manifold $(X,\p X)$ with boundary is a critical point to the area functional among hypersurfaces whose boundary remains in the boundary of the ambient manifold. $M$ is called stable if its second variation is nonnegative among such hypersurfaces. Then given $\eta$ a choice of unit normal vector field along $M$, we have the following stability inequality for any compactly supported Lipschitz function $\phi$ over $M$,
	\begin{align*}
		\int_M |\nabla_M \phi|^2 \geq \int_M  (|\sff|^2+\Ric(\eta,\eta))\phi^2 +\int_{\partial M} A(\eta,\eta)\phi^2.
	\end{align*}

	In the case $\p M$ and $\p X$ are both empty, then nonnegative Ricci curvature of a closed ambient manifold forces rigidity results of its stable minimal hypersurfaces (Schoen-Yau \cite{SchoeYau1979-ExistenceIncompressibleMinimal},\cite{SchoeYau1979-StructureManifoldsPositive}); while if the ambient manifold is noncompact, to use the same method, we need to bound the volume growth of the minimal hypersurface.

	Chodosh-Li-Stryker \cite{ChodoLiStryk2022-CompleteStableMinimal} are able to use the method of $\mu$-bubble to give an almost-linear volume growth bound for a non-parabolic end of a minimal hypersurface in a noncompact 4-manifold with (suitable) positive curvature assumption.
	In this paper we study the analogous question for free boundary minimal hypersurfaces.

	We note that recently Catino-Mastrolia-Roncoroni \cite{CatinMastrRonco2023-TwoRigidityResults} has given ridigity results of complete stable minimal hypersurfaces in $\R^4$ or a positively curved Riemannian manifold $X^n$ when $n\leq 6$, where the authors look at a suitable positive curvature condition introduced in \cite{ShenYe1996-StableMinimalSurfaces}. 
	A review of progess in this direction can also be found in \cite{ChodoLiStryk2022-CompleteStableMinimal}.

	For an ambient 4-manifold $(X,\p X)$, we say $X$ has weakly convex boundary if the second fundamental form of the boundary is positive semi-definite. We use $\Ric, R$ to denote respectively Ricci and scalar curvature.  The so-called non-negative 2-intermediate Ricci curvature assmuption, denoted as $\Ric_2 \geq 0$, lies between non-negative sectional curvature and non-negative Ricci curvature, and will be explained in section \ref{setup}.

	\begin{theorem}
		Consider $(X^4, \partial X)$ a complete Riemannian manifold with weakly convex boundary, $R \geq 2$, $\Ric_2 \geq 0$, and weakly bounded geometry. Then any complete stable two-sided immersion of free boundary minimal hypersurface $(M,\partial M)\hookrightarrow (X,\partial X)$ is totally geodesic, $Ric(\eta,\eta)=0$ along $M$ and $A(\eta,\eta)=0$ along $\p M$, for $\eta$ a choice of unit normal over $M$.
	\end{theorem}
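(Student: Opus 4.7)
The plan is to adapt the three-stage strategy of Chodosh--Li--Stryker to the free boundary setting, using the weak convexity of $\p X$ throughout to handle all boundary contributions.

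\textbf{Stage 1 (reduction to volume growth).} In dimension four, summing the pairwise sums $\sec(\eta,e_i) + \sec(\eta,e_j)$ over the three pairs in an orthonormal frame of $TM$ gives $2\Ric(\eta,\eta) = \sum_{i<j}(\sec(\eta,e_i)+\sec(\eta,e_j))$, so $\Ric_2 \geq 0$ forces $\Ric(\eta,\eta) \geq 0$. Combined with $|\sff|^2 \geq 0$ and, by weak convexity, $A(\eta,\eta) \geq 0$ on $\p M$, the right-hand side of the stability inequality is a sum of nonnegative integrals. If cutoffs $\phi_k \to 1$ with $\int_M |\nabla_M \phi_k|^2 \to 0$ exist on $M$, Fatou and monotone convergence yield
\begin{align*}
    0 \;\geq\; \int_M \bigl(|\sff|^2 + \Ric(\eta,\eta)\bigr) + \int_{\p M} A(\eta,\eta),
\end{align*}
forcing each integrand to vanish, which is precisely the conclusion. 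Standard logarithmic cutoffs meet this requirement whenever $M$ has at most linear volume growth, so the goal becomes to establish such a bound.

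\textbf{Stage 2 (linear volume growth via free boundary $\mu$-bubbles).} Given a non-parabolic end $E$ of $M$, I would run the warped $\mu$-bubble construction of Chodosh--Li--Stryker inside $X$, imposing a free boundary condition on $\p X$. Weak convexity of $\p X$ serves as the barrier forcing minimizers to be smooth and orthogonal to $\p X$, with a nonnegative boundary contribution in the second variation. Plugging $\phi \equiv 1$ into the bubble's second variation and combining $R_X \geq 2$ with $\Ric_2 \geq 0$ and the Gauss--Bonnet formula on the 2D leaves (where weak convexity again bounds the boundary geodesic curvature) should give a uniform diameter bound for the bubbles. Sweeping $E$ by such bubbles then produces at most linear volume growth, completing the input for Stage 1.

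\textbf{Main obstacle.} The technical heart of the proof is the free boundary $\mu$-bubble theory: existence and regularity of minimizers near $\p X$, orthogonality at $\p X$, and the favorable sign of boundary integrals in the second variation. A secondary recurring issue is that admissible test functions for the free boundary stability inequality do not vanish on $\p M$, so every integration by parts generates a boundary integral along $\p M$ whose sign must be read off from weak convexity of $\p X$.
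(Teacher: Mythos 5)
Your overall architecture (stability inequality plus cutoffs of vanishing Dirichlet energy, with $\mu$-bubbles supplying geometric control of an end) matches the paper's, and your Stage 1 reduction is correct as far as it goes. But there are two genuine gaps between what Stage 2 delivers and what Stage 1 needs. First, the $\mu$-bubble sweep-out only controls the non-parabolic end(s); it says nothing about parabolic ends, and a parabolic end can have superlinear volume growth, so "linear volume growth of $M$" is not what you obtain and logarithmic cutoffs alone will not close the argument. The paper instead (a) proves there is \emph{at most one} non-parabolic end --- this is a separate theorem requiring the refined stability inequality applied to $\abs{\nabla u}$ for a harmonic function $u$ with finite Dirichlet energy and Neumann condition on $\p M$, which in turn forces one to develop the parabolicity theory for manifolds with noncompact boundary (regularity of the mixed Dirichlet--Neumann problem at corners, after perturbing the corner angle to be small) --- and (b) handles each parabolic piece by a different mechanism entirely: on a parabolic component one has harmonic functions $f_i \to 1$ with $\int \abs{\nabla f_i}^2 \to 0$ by definition of parabolicity, and these are spliced with the annular cutoffs on the single non-parabolic end to build the global test function. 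Your proposal is silent on both points.

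Second, even on the non-parabolic end, a diameter bound for the separating bubbles (hence for the annuli $M_k$) does not by itself yield a volume bound: you must convert bounded diameter into bounded volume, and this is precisely where the weakly bounded geometry hypothesis enters --- first to prove uniform curvature estimates $\sup_M \abs{\sff} \le C$ for the stable free boundary hypersurface via a blow-up to the Chodosh--Li flatness theorem in $\R^4$ (with a reflection across $\p\HH_+$ in the boundary case), and then to bound the volume of intrinsic balls of fixed radius in $M$ by a covering argument. Your proof never invokes weakly bounded geometry, so this step is missing. A smaller inaccuracy: in the bubble construction the barrier forcing the minimizer away from $\p_\pm N$ is the blow-up of the prescribing function $h$, not convexity of $\p X$; convexity (in fact $A_2 \ge 0$ suffices until the very last step) is used only for the sign of the boundary integrals $\int_{\p\Sigma} A(\nu_\Sigma,\nu_\Sigma)\phi^2 u$ and for the Neumann-type condition $\p_\eta f \ge -k_{\p\Sigma} f$ needed in the Bonnet--Myers-type diameter lemma on the leaves, which is a second stabilization argument with $f = uw$ rather than a direct Gauss--Bonnet computation with $\phi \equiv 1$.
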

	In particular, any compact manifold $(X^4,\p X)$ with positive sectional curvature and weakly convex boundary will satisfy the assumption above. This gives the following nonexistence result:
	\begin{corollary}
		There is no complete two-sided stable free boundary minimal immersion in  a compact manifold $(X^4,\p X)$ with positive sectional curvature and weakly convex boundary.
	\end{corollary}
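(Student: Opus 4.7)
The plan is to extend the Chodosh--Li--Stryker strategy to the free boundary setting, using weak convexity of $\p X$ to ensure the extra boundary term in the stability inequality falls on the favorable side: since $A(\eta,\eta) \geq 0$ along $\p M$, the integral $\int_{\p M} A(\eta,\eta)\phi^2$ is nonnegative and can be either retained or discarded at will. The first step is to show that every end of $M$ is parabolic with almost-linear volume growth via a free boundary $\mu$-bubble construction on $M$. One sets up a warped prescribed mean curvature functional of the form
\[
\mathcal{A}(\Omega) = \mathcal{H}^{2}(\p^* \Omega \cap \mathrm{int}\, M) - \int_\Omega h \, d\mathcal{H}^{3},
\]
minimized among Caccioppoli sets with free boundary on $\p M$. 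Because $\p X$ is weakly convex, the boundary contribution in the first and second variation formulas has a favorable sign, so existence, regularity, and stability of free boundary $\mu$-bubbles go through as in the interior case. Iterating this construction traps any putative non-parabolic end in a region of controlled size, forcing almost-linear volume growth of $M$.

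Next, plug a logarithmic cutoff $\phi_r$ supported in $B_r(p)\cap M$ into the free boundary stability inequality. The term $\int_{\p M} A(\eta,\eta)\phi_r^2 \geq 0$ is kept on the right. Using the Gauss equation for a minimal hypersurface and $\Ric_2 \geq 0$, the interior integrand $|\sff|^2 + \Ric(\eta,\eta)$ can be bounded below by $\tfrac{1}{2}R_X - \tfrac{1}{2}R_M + (\text{nonneg})$; combining $R_X \geq 2$ with a conformal rescaling on $M^3$ that absorbs $R_M$ (and which carries a Robin-type boundary condition on $\p M$ whose sign is controlled by $A(\eta,\eta)\geq 0$) yields an inequality of the form
\[
\int_M |\nabla \phi_r|^2 \geq \int_M (1 + \text{nonneg}) \phi_r^2.
\]
Sending $r \to \infty$ and invoking almost-linear volume growth forces each nonnegative integrand to vanish pointwise, giving $|\sff|\equiv 0$ on $M$, $\Ric(\eta,\eta)\equiv 0$ on $M$, and $A(\eta,\eta) \equiv 0$ on $\p M$, which is exactly the conclusion.

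The main obstacle will be the free boundary $\mu$-bubble analysis: establishing existence, regularity and a workable second variation for free boundary $\mu$-bubbles in sufficient detail to make the CLS iteration go through, and coupling the conformal change on $M^3$ with a Robin boundary condition on $\p M$ so that the favorable sign from weak convexity of $\p X$ is genuinely exploited rather than merely preserved. Getting the $\mu$-bubble iteration and the conformal--Robin analysis to cooperate with the weakly bounded geometry hypothesis is the central technical point of the extension.
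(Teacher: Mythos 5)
Your proposal re-derives the machinery of the main theorem (Theorem 6.1), but the statement you were asked to prove is the Corollary, which is a \emph{nonexistence} result. The paper's own proof of the corollary is a one-line application of the theorem: compactness gives weakly bounded geometry automatically, positive sectional curvature gives $\Ric_2 > 0$ and (after rescaling) $R \geq 2$, so the theorem applies to any putative complete two-sided stable free boundary minimal immersion $(M,\partial M)\hookrightarrow(X,\partial X)$, yielding $\Ric(\eta,\eta)\equiv 0$ along $M$. Since positive sectional curvature forces $\Ric > 0$ everywhere, this is a contradiction, so no such $M$ exists.

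Your argument has a genuine gap at the very end: you arrive at ``$|\sff|\equiv 0$, $\Ric(\eta,\eta)\equiv 0$ on $M$, and $A(\eta,\eta)\equiv 0$ on $\partial M$, which is exactly the conclusion,'' but this is the conclusion of the \emph{rigidity} theorem, not of the corollary. You never invoke positive sectional curvature to observe that $\Ric(\eta,\eta)=0$ is impossible, so you have not actually excluded the existence of $M$. You also do not verify that the corollary's hypotheses (compactness, positive sectional curvature, weakly convex boundary) imply the theorem's hypotheses ($R\geq 2$ after rescaling, $\Ric_2\geq 0$, weakly bounded geometry), which is the other half of what the paper's short argument does. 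As a minor further point, your phrase ``every end of $M$ is parabolic'' is not what the $\mu$-bubble analysis gives --- the paper shows there is at most one non-parabolic end (from the stability/harmonic-function argument) and then bounds the volume growth of that end via the $\mu$-bubble diameter estimate --- but this is an imprecision in recounting the main theorem's proof rather than an error affecting the corollary, since the corollary should simply cite the theorem.
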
  
	We will note two aspects that are mainly different from the case without boundary in \cite{ChodoLiStryk2022-CompleteStableMinimal} and require new ingredients. 

	The first is the notion of parabolicity and non-parabolicity for an end $E$ of manifolds with noncompact boundary, where we need to look at a (weakly) harmonic function $f$ with mixed (Dirichelt-Neumann) boundary conditions on two different parts of the boundary $\p E =\p_0 E \cap \p_1 E$. Standard ellipic regularity tells us that $f$ is smooth away from the points of intersection $\p _0 E \cap \p_1 E$.  By the work of Miranda \cite{Miran1955-SulProblemaMisto} we can see that $f$ is continuous (and bounded) around each point of intersection. Then the work of Azzam and Kreyszig \cite{AzzamKreys1982-SolutionsEllipticEquations} gives that if the interior angle of interesection $\theta$ is small, then $f$ is $C^{k,\alpha}$ for $k$ and $\alpha$ depending on $\theta$. This allows us to control the number of non-parabolic ends of $M$.
	\begin{theorem} 
		Let $(X^4,\partial X)$ be a complete manifold with $\Ric_2 \geq 0, A_2 \geq 0,$  and $(M,\partial M)$ a free boundary two-sided stable minimal immersion with infinite volume, then for any compact set $K \subset M$, there is at most 1 non-parabolic component in $M \setminus K$.
	\end{theorem}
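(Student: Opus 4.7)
The plan is to argue by contradiction. Suppose $M\setminus K$ has two non-parabolic components $E_1,E_2$. Following the Li--Tam construction adapted to the free boundary setting, I would exhaust $M$ by smooth domains $\Omega_1 \subset \Omega_2 \subset \cdots$ with $K\subset \Omega_1$ and $\p \Omega_i$ meeting $\p M$ transversally, and solve the mixed Dirichlet--Neumann problem
\begin{align*}
\Delta f_i = 0 \text{ in } \Omega_i^\circ,\quad f_i=1 \text{ on } \p\Omega_i \cap E_1,\quad f_i=0 \text{ on } \p\Omega_i\cap (M\setminus E_1),\quad \p_\nu f_i =0 \text{ on } \p M \cap \Omega_i.
\end{align*}
Non-parabolicity of $E_1,E_2$ provides uniform Dirichlet energy bounds on the sequence $f_i$, and standard Harnack estimates ensure the limit $f$ satisfies $0\leq f\leq 1$, $\int_M |\nabla f|^2 < \infty$, and is non-constant (asymptotically $0$ on $E_2$ and bounded away from $0$ on $E_1$).

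The regularity of $f$ is immediate in the interior of $M$ and up to the smooth part of $\p M$ (away from the corners $\p\Omega_i \cap \p M$) by standard Neumann theory. At the corners the problem is mixed Dirichlet--Neumann: Miranda's theorem gives continuity and $L^\infty$ control of $f$ at each corner point, and by choosing the exhaustion so that the interior dihedral angle at each corner is small enough, the Azzam--Kreyszig theorem provides $C^{k,\alpha}$ regularity sufficient to justify the integration by parts used below. This corner analysis is the essential new ingredient relative to the closed-boundary case.

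Once the harmonic function is in hand, the strategy parallels Chodosh--Li--Stryker: combine the stability inequality applied to a test function of the form $\phi = |\nabla f|^\alpha \zeta$ (for a cutoff $\zeta$ and a carefully chosen power $\alpha$) with the Bochner identity $\tfrac{1}{2}\Delta|\nabla f|^2 = |\nabla^2 f|^2 + \Ric_M(\nabla f,\nabla f)$. Using the Gauss equation, $\Ric_M(\nabla f,\nabla f)$ decomposes into ambient sectional sums of the type controlled by $\Ric_2 \geq 0$, plus extrinsic $|\sff|^2$ terms which match those appearing on the right-hand side of stability. The free boundary condition $\nabla_\nu f = 0$ along $\p M$ produces boundary integrands that are controlled by $A_2 \geq 0$ on $\p X$ together with $A(\eta,\eta)$ from the stability inequality. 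After cancellation, one concludes that $\nabla f$ is parallel and $f$ is affine; since $f$ is bounded and $M$ has infinite volume, $f$ is forced to be constant, contradicting the nontrivial behavior on two ends.

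The principal obstacle is the corner setup: one must select the $\Omega_i$ so that $\p\Omega_i$ meets $\p M$ at a dihedral angle within the Azzam--Kreyszig range while still separating $E_1,E_2$, and verify that the resulting regularity is strong enough for all boundary integrations—in particular those involving the second fundamental form of $M$ and the second fundamental form $A$ of $\p X$ produced by the Bochner--stability manipulation. A secondary technical point is calibrating the exponent $\alpha$ in the test function so that $\Ric_2 \geq 0$ is precisely what is needed to close the estimate on a $3$-dimensional free boundary stable minimal hypersurface in a $4$-manifold.
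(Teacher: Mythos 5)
Your proposal takes essentially the same route as the paper: produce a non-constant bounded harmonic function with finite Dirichlet energy by an exhaustion with mixed Dirichlet--Neumann data (the paper's Theorem~\ref{HarFinEne}, resting on the Miranda and Azzam--Kreyszig corner regularity developed in Section~\ref{pnp}), then run a Schoen--Yau/Bochner argument in which $\Ric_2\geq 0$ controls the interior via the Gauss equation and $A_2\geq 0$ gives the free-boundary term a favorable sign (the paper's Theorem~\ref{stableineq1}). The only cosmetic differences are that the paper plugs $\phi\sqrt{|\nabla u|^2+\varepsilon}$ (rather than a general power $|\nabla f|^\alpha\zeta$) into stability together with the improved Kato inequality, and concludes $|\nabla u|$ is constant and hence zero by finite Dirichlet energy on an infinite-volume manifold, rather than deducing that $f$ is ``affine''.
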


	Here we write $A$ as the second fundamental form of $\p X$ in $X$, then $A_2 \geq 0$ is an intermediate assumption lying between convexity and mean convexity, which will be explained in Section \ref{setup}.

	The second ingredient is the bound of volume growth on a ball of fixed radius in $M$. In \cite{ChodoLiStryk2022-CompleteStableMinimal}, since $M$ has no boundary, with a uniform lower Ricci bound, we can obtain volume bound via Bishop-Gromov volume comparison theorem. 
	To apply the same for the free boundary case, one can exploit the assumption that $X$ has convex boundary. On the other hand, we can actually use the weakly bounded geometry assumption (that is already needed if one needs to apply blow-up argument to an arbitrary noncompact Riemann manifold).

	\begin{lemma}
        Let $(X^n,\partial X,g)$ be a complete Riemannian manifold with weakly bounded geometry at scale $Q$, and $(M^{n-1},\p M) \hookrightarrow (X,\p X)$ a complete immersed submanifold with uniformly bounded second fundamental form, then the following is true,
        \begin{itemize}
            \item there is $0<N<\infty$ such that for any $p\in M$, the maximum number of disjoint balls of radius $\delta$ centered around points in $B^M_{4\delta}(p)$ is bounded by N,
            \item for any $R>0$, there is a constant $C=C(R,Q)$ such that the volume of balls of radius $R$ around any point in $M$ is bounded by $C$.
        \end{itemize}
    \end{lemma}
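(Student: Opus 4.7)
The plan is to reduce both conclusions to the observation that $M$ has uniformly bounded \emph{intrinsic} geometry --- bounded sectional curvature together with a bounded second fundamental form of $\p M \subset M$ --- and then argue by comparison.

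First I would establish the intrinsic bounded geometry of $M$. The Gauss equation expresses the intrinsic sectional curvature of $M$ as the ambient sectional curvature of $X$ plus quadratic expressions in the second fundamental form $\sff$ of $M$: the former is bounded on scale $Q$ by weakly bounded geometry of $X$, and the latter is bounded by hypothesis, so $M$ has uniformly bounded sectional curvature. At a point of $\p M$, the free boundary condition $M \perp \p X$ together with the smoothness of $\p X$ in the weak bounded geometry sense yields that the second fundamental form of $\p M \subset M$ equals the restriction of $A$ (the second fundamental form of $\p X \subset X$) to $T\p M$; hence it is uniformly bounded, and $\p M \subset M$ is weakly convex whenever $\p X \subset X$ is.

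Next I would fix $\delta>0$ small enough, depending only on $Q$ and the bound on $|\sff|$, so that for every $q \in M$ the intrinsic exponential map (half-exponential near $\p M$) gives coordinates on $B^M_\delta(q)$ in which the metric is uniformly $C^0$-close to Euclidean. This yields constants $0 < c \le C$ with
\[
c\delta^{n-1} \le \mathrm{Vol}(B^M_\delta(q)) \le C\delta^{n-1} \qquad \text{for every } q \in M.
\]
The first bullet then follows by direct volume comparison: any disjoint collection $\{B^M_\delta(q_i)\}_{i=1}^N$ with $q_i \in B^M_{4\delta}(p)$ lies inside $B^M_{5\delta}(p)$, so $N \cdot c\delta^{n-1} \le \mathrm{Vol}(B^M_{5\delta}(p)) \le C(5\delta)^{n-1}$, giving $N \le 5^{n-1} C/c$ independent of $p$.

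For the second bullet I would invoke a Bishop-Gromov-type comparison. With the sectional curvature of $M$ bounded below and $\p M \subset M$ weakly convex, one can double $M$ across $\p M$ to obtain a $C^{1,1}$ Riemannian manifold whose sectional curvature is bounded below in a synthetic sense, and apply Bishop-Gromov in the double; this bounds $\mathrm{Vol}(B^M_R(p))$ by a multiple of the volume of an $R$-ball in the appropriate space form and produces the desired $C(R,Q)$. The main technical obstacle is precisely this step: classical Bishop-Gromov is formulated for smooth manifolds without boundary, so one must justify the application either via the doubling construction (whose metric is only $C^{1,1}$, invoking an Alexandrov or $\mathrm{CD}(K,n)$ version of volume comparison) or via a Heintze-Karcher argument adapted to manifolds with weakly convex boundary. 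Once this is handled, the remainder follows routinely from the uniform intrinsic bounded geometry of $M$.
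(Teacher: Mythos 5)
Your approach to the first bullet is essentially sound and morally parallel to the paper's, but the second bullet contains a genuine gap, and in fact you identify it yourself without resolving it. The lemma as stated makes \emph{no} convexity assumption on $\p X$ (or on $\p M \subset M$): it applies to any immersed submanifold with bounded second fundamental form inside a manifold with weakly bounded geometry. Your Bishop--Gromov-via-doubling argument crucially needs $\p M \subset M$ to be weakly convex for the doubled space to have a synthetic lower curvature bound, and you derive that convexity only from an assumed weak convexity of $\p X$ --- an assumption that is absent from the lemma. Indeed, the paper remarks explicitly in the introduction that one ``can exploit the assumption that $X$ has convex boundary'' to run Bishop--Gromov, but that the lemma is designed to avoid this and instead lean only on weakly bounded geometry. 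Your argument therefore proves a weaker statement than the lemma claims, and at the cost of importing Alexandrov/$\mathrm{CD}(K,n)$ machinery.

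The paper's proof sidesteps volume comparison entirely. After establishing the first bullet (via the local diffeomorphism $\Psi : (S,s)\to(M,p)$ from the weakly bounded geometry pullback, showing $\Psi(B^S_r(s)) = B^M_r(p)$ for small $r$, and then comparing small-ball volumes in $S$ to Euclidean ones), it runs the elementary inductive covering argument of Bamler--Zhang: a maximal disjoint collection of $\delta$-balls centered in $B^M_{4\delta}(p)$ has at most $N$ elements, maximality gives $B^M_{4\delta}(p)\subset \cup B^M_{2\delta}(p_i)$, a triangle-inequality argument upgrades this to $B^M_{2\delta+r}(p)\subset \cup B^M_r(p_i)$ for all $r\ge 4\delta$, and then induction yields $|B^M_{2\delta k}(q)| \le C^k$. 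This requires no curvature comparison, no doubling, and no boundary convexity --- exactly the generality the lemma is after. Note also that the first-bullet step needs care because $M$ is only an \emph{immersion}; the paper handles this by working with the pulled-back manifold $S$ (which is locally embedded in a Euclidean half-ball) rather than with $M$ directly, whereas your ``exponential coordinates on $B^M_\delta(q)$'' would need a uniform injectivity radius lower bound that does not obviously follow from $|\sff|$ bounds alone for an immersion.
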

	Proof of the lemma used an inductive covering argument in Bamler-Zhang \cite{BamleZhang2015-HeatKernelCurvature}. Preliminaries and outline of the paper is given in Section \ref{setup}.

	\textbf{Acknowledgements.} The author wants to thank Otis Chodosh for introducing this problem, for his continuous support and encouragement, and for many helpful discussions and comments on earlier drafts of this paper. The author wants to thank Richard Bamler for discussing the ideas of Lemma 2.1 in \cite{BamleZhang2015-HeatKernelCurvature}, Shuli Chen, Chao Li and Jared Marx-Kuo for interest in this work and comments on  the first draft. The author thanks Han Hong and Jia Li for pointing out typos in the first version of this paper in Theorem 6.4.

	\section{Set Up}\label{setup}

    We first set up some notations and definitions in this section.
    Recall that an immersed submanifold $M \hookrightarrow X$ is called minimal if its mean curvature vanishes everywhere. Throughout the paper we use the convention that $\sff_M(Y,Z)=-\langle \overline{\nabla}_Z Y, \nu_{M}\rangle$ given a choice of normal vector field $\nu_M$ for a hypersurface and $\overline{\nabla}$ the Levi-Civita connection on the ambient manifold $X$. We define mean curvature as $H_M=\text{tr}(\sff).$ In this convention, mean curvature of a sphere with outward unit normal in the Euclidean space is positive.

	In this paper, we also reserve the notation $\p M$ to denote the boundary of a continuous manifold instead of a subset.

    We first define the notion of free boundary minimal immersion. Consider an immersion of hypersurface $(M,\p M) \hookrightarrow (X,\p X)$, both manifolds with nonempty boundary, here we always require $\p M \subset \p X$ when writing $(M,\p M) \hookrightarrow (X,\p X)$. We write $\sff$ for the second fundamental form of $M \hookrightarrow X$  and $A$ for the second fundamental form of $\p X \hookrightarrow X$.  
    \begin{definition}
        We say that $M$ is a free boundary minimal immersed hypersurface if 
        \begin{itemize}
            \item the mean curvature $H=\trace(\sff)$ vanishes everywhere,
            \item $M$ meets $\p X$ orthogonally along $\p M$ (that is, the outward unit normal of $\p M$ agrees with the outward unit normal of $\p X$; so the second fundamental form of $\p M \hookrightarrow M$ is the same as restriction of the second fundamental form of $\p X \hookrightarrow X$ on $T\p M$).
        \end{itemize}
       
    \end{definition}

    The above definition has an equivalent characterization via variation of area.
 
    For any 1-parameter family of immersions $\varphi_t:(M,\p M) \hookrightarrow (X,\p X)$ with $t\in (-\epsilon,\epsilon)$, and $\varphi_0$ parametrizing $(M,\p M)\hookrightarrow (X,\p X)$, we write $V(x)=\frac{d}{dt}\vert_{t=0}(\varphi_t(M))$; we further require $V$ to be compactly supported along $M$, and $\p (\varphi_t(M))=\varphi_t(\p M)\subset \p X$, forcing $V$ to be parallel to the boundary of $X$ along $\p M$. Then the first  variation of area give,
    \begin{equation}
        \frac{d}{dt}\Big\vert_{t=0} \text{Area}(\varphi_t(M))=\int_M \dive_{M} V =-\int_M V\cdot H+\int_{\p M} V\cdot \nu_{\p M},
    \end{equation}
    with $\nu_{\p M}$ the outward unit normal of $\p M \hookrightarrow M$.

    Recall that an immersion $M\hookrightarrow X$ is called two-sided if there is a globally defined continuous unit normal vector field $\nu$.

    \begin{definition}
        A two-sided immersed free boundary minimal hypersurface $(M,\p M) \hookrightarrow (X,\p X)$ is stable if for any variation $\varphi_t$ (as defined above)  with vector field $V=f\nu$, the following stability inequality holds,
        \begin{align}
            0&\leq \frac{d^2}{dt^2}\Big\vert_{t=0} \text{Area}(\varphi_t(M))\\
            &=\int_M |\nabla_M f|^2-(|\sff|^2+\Ric_X(\nu,\nu))f^2-\int_{\p M}A(\nu,\nu)f^2.
        \end{align}
    \end{definition}

    We now introduce the curvature assumptions we made on the ambient manifolds. 
	The following curvature condition lies between nonnegative Ricci curvature and nonnegative sectional curvature (see also \cite{ChodoLiStryk2022-CompleteStableMinimal}). 
    \begin{definition}
        We say that $X$ has $\Ric_2 \geq 0$, i.e. nonnegative 2-intermediate Ricci curvature , if
	\begin{equation}
		R(v,u,u,v)+R(w,u,u,w)\geq 0,
	\end{equation} 
	for any $x\in X$ and any orthonormal vectors $u,v,w$ of $T_xM$, where $R(\cdot,\cdot,\cdot,\cdot)$ represents the Riemann curvature tensor of $X$. 
    \end{definition}
    \begin{remark}
        Note since $\Ric$ is symmetric, as long as the dimension of $X$ is  at least 3, $\Ric_2 \geq 0$ implies that $\Ric(u,u) \geq 0$ for any vector $u$ in the tangent plane of $X$ and so $\Ric \geq 0$ everywhere.
    \end{remark}

	Using $\Ric_2 \geq 0$ of the ambient manifold and Gauss Equation, we can control the Ricci curvature from below by the second fundamental form of a minimal immersion.
	\begin{lemma}[\cite{ChodoLiStryk2022-CompleteStableMinimal}, Lemma 2.2] \label{RicXtoM}
        Consider $(M^3,\p M) \hookrightarrow (X^4,\p X)$ immersed free boundary minimal hypersurface, if X has $\Ric_2 \geq 0$, then
		\begin{equation}
			\Ric_M \geq -|\sff|^2.
		\end{equation}
	\end{lemma}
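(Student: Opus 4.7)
The plan is to reduce the bound to a pointwise computation via the Gauss equation, using exactly two ingredients: $\Ric_2 \geq 0$ of $X$ to kill the ambient curvature contribution, and minimality of $M$ (i.e.\ $\trace(\sff)=0$) to control the second fundamental form terms. The free boundary condition plays no role in the interior pointwise bound and is not needed here.

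Fix $p \in M$ and a unit $u \in T_pM$. Complete $u$ to an orthonormal basis $\{u, e_2, e_3\}$ of $T_pM$ and let $\nu$ be a unit normal of $M$ in $X$ at $p$. By the Gauss equation,
\begin{equation*}
\Ric_M(u,u) = \sum_{i=2}^{3}\bigl[R_X(e_i,u,u,e_i) + \sff(e_i,e_i)\sff(u,u) - \sff(e_i,u)^2\bigr].
\end{equation*}
The triple $(u,e_2,e_3)$ is orthonormal in $T_pX$, so the curvature assumption $\Ric_2 \geq 0$ immediately gives $R_X(e_2,u,u,e_2) + R_X(e_3,u,u,e_3) \geq 0$. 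Thus the sum of the ambient curvature terms is nonnegative and can be discarded.

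For the second fundamental form contribution, minimality gives $\sff(u,u) + \sff(e_2,e_2) + \sff(e_3,e_3) = 0$, so
\begin{equation*}
\sum_{i=2}^{3}\sff(e_i,e_i)\,\sff(u,u) = -\sff(u,u)^2.
\end{equation*}
Combining, we obtain
\begin{equation*}
\Ric_M(u,u) \geq -\sff(u,u)^2 - \sff(e_2,u)^2 - \sff(e_3,u)^2.
\end{equation*}
Since $\sff(u,u)^2 + \sff(e_2,u)^2 + \sff(e_3,u)^2$ is a partial sum of the squared entries of $\sff$ in the basis $\{u,e_2,e_3\}$, it is bounded above by $|\sff|^2$, which yields the desired inequality $\Ric_M(u,u) \geq -|\sff|^2$.

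There is no real obstacle here: the argument is essentially the same as Chodosh--Li--Stryker's Lemma~2.2 in the closed setting, transported verbatim to the interior of the free boundary hypersurface. The only thing to verify is that the choice of $\Ric_2$ plus the pair of tangent directions $(e_2,e_3)$ is precisely what is needed to absorb both Gauss-equation ambient terms simultaneously, which it does because $\dim M = 3$ contributes exactly two such terms.
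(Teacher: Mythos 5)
Your proof is correct and takes the same approach the paper defers to: the paper's proof simply cites Chodosh--Li--Stryker's Lemma 2.2 for interior points and extends by continuity, and your writeup is precisely that pointwise Gauss-equation computation (which in fact works verbatim at boundary points as well, making the continuity remark unnecessary).
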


	\begin{proof}
		Proof for interior points is the same as \cite{ChodoLiStryk2022-CompleteStableMinimal}; for boundary points we can extend by continuity.
	\end{proof}
	\begin{remark}
        The proof works in other dimensions too, the same conclusion holds for all $X^n$ with $n\geq 3$. When $n=3$, we would need $X$ to have positive sectional curvature. If $n\geq 4,$ we only need the following weaker assumption named $\Ric_{n-2}\geq 0$, meaning for any orthonormal vectors $e_1,...,e_{n-1}$ at a tangent plane of $X$, we have $$\sum_{k=2}^{n-1} R(e_k,e_1,e_1,e_k)\geq 0.$$
    \end{remark}

	We can in fact get a sharper bound with a constant depending on the dimension.

    \begin{lemma} [\cite{ChodoLiStryk2022-CompleteStableMinimal}, Lemma 4.2]   \label{RicXtoM2}
        Consider $(M^{n-1}, \p M) \hookrightarrow (X^n,\p X)$ immersed free boundary minimal  hypersurface, if $X$ has $\Ric_{n-2} \geq 0$, then
		\begin{equation}
			\Ric_M \geq -\frac{n-2}{n-1}|\sff|^2.
		\end{equation}
	\end{lemma}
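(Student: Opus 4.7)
My plan is to sharpen the Gauss-equation computation used in Lemma~\ref{RicXtoM} by actually exploiting the trace-free structure of $\sff$ that minimality provides, rather than just the sign of one curvature term. The starting point is the same: at an interior point $p \in M$, I pick a unit tangent vector $v \in T_p M$, extend $v = f_1$ to an orthonormal frame $f_1, \ldots, f_{n-1}$ of $T_p M$, apply the Gauss equation to each $R_M(f_k, v, v, f_k)$ and sum over $k = 2, \ldots, n-1$. With $h$ the scalar second fundamental form associated to $\sff$, this yields
\begin{equation*}
\Ric_M(v,v) = \sum_{k=2}^{n-1} R_X(f_k, v, v, f_k) + h(v,v) \sum_{k=2}^{n-1} h(f_k, f_k) - \sum_{k=2}^{n-1} h(f_k, v)^2.
\end{equation*}
The $\Ric_{n-2} \geq 0$ hypothesis on $X$ discards the first sum from below, and minimality identifies $\sum_{k=2}^{n-1} h(f_k, f_k)$ with $-h(v,v)$. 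Folding the resulting $-h(v,v)^2$ into the last sum collapses the right-hand side to $-\sum_{k=1}^{n-1} h(f_k, v)^2 = -|Sv|^2$, where $S$ denotes the shape operator associated to $h$.

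The step I expect to be the crux, and the only place the improved constant $\tfrac{n-2}{n-1}$ enters, is a sharp pointwise estimate on $|Sv|^2$. Diagonalizing $S$ at $p$ with eigenvalues $\lambda_1, \ldots, \lambda_{n-1}$, minimality reads $\sum_i \lambda_i = 0$, so writing $\lambda_i = -\sum_{j \neq i} \lambda_j$ and applying Cauchy--Schwarz yields
\begin{equation*}
\lambda_i^2 \leq (n-2) \sum_{j \neq i} \lambda_j^2 = (n-2)\bigl(|\sff|^2 - \lambda_i^2\bigr),
\end{equation*}
i.e.\ $\lambda_i^2 \leq \tfrac{n-2}{n-1}|\sff|^2$ for every $i$. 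Since $|Sv|^2 = \sum_i \lambda_i^2 a_i^2$ when $v = \sum_i a_i e_i$ is decomposed in the eigenbasis of $S$, this bounds $|Sv|^2$ by $\tfrac{n-2}{n-1}|\sff|^2 |v|^2$ and closes the argument at interior points. It is precisely the use of minimality here, which upgrades the naive $\lambda_i^2 \leq |\sff|^2$ implicit in Lemma~\ref{RicXtoM}, that produces the sharp constant.

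For boundary points $p \in \p M$ I would argue exactly as in Lemma~\ref{RicXtoM}: the inequality $\Ric_M(v, v) \geq -\tfrac{n-2}{n-1}|\sff|^2 |v|^2$ is a closed condition and the interior points are dense in $M$, so it extends by continuity. I do not anticipate a genuine obstacle anywhere. The Gauss-equation manipulation is routine, the $\Ric_{n-2}$ hypothesis is exactly tailored to the one-index sum that appears, and the entire improvement over Lemma~\ref{RicXtoM} is concentrated in the elementary Cauchy--Schwarz step above.
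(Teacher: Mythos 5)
Your proof is correct, and since the paper gives no independent argument for this lemma (it simply cites Chodosh--Li--Stryker, Lemma~4.2), your write-up supplies exactly the computation the citation stands in for. The Gauss-equation reduction $\Ric_M(v,v) \geq -|Sv|^2$ via minimality and $\Ric_{n-2} \geq 0$, followed by the Cauchy--Schwarz estimate $\lambda_i^2 \leq \tfrac{n-2}{n-1}|\sff|^2$ for the eigenvalues of a trace-free shape operator, is the standard route to the sharp constant and is the same approach as the cited source; the continuity extension to boundary points mirrors the remark given for Lemma~\ref{RicXtoM}.
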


	We also define an analogous ``2-convexity'' condition for $\partial X \hookrightarrow X$, lying between convexity and mean convexity.
	\begin{definition}
		For $(X,\partial X)$ a complete manifold with boundary, recall $A$ is the second fundamental form of $\partial X \hookrightarrow X$, we say that $A_2 \geq 0$ if for any orthonormal vectors $e_1, e_2$ on a tangent plane of $\p X$, we have $A(e_1,e_1)+A(e_2,e_2) \geq 0$.
	\end{definition}
    This condition will be useful combined with the stability inequality in section \ref{1NonparaEnd}.

	Also, to obtain blow up analysis needed for an arbitrary ambient Riemannian manifold, we require $(X,\partial X)$ to have weakly bounded geometry, defined as below.

	\begin{definition}\label{defWBG}
		We say a complete Riemannian manifold with boundary $(X,\partial X,g)$ has weakly bounded geometry (up to the boundary) at scale $Q$, if for this $Q>0,$ there is $\alpha \in (0,1)$ such that for any point $x \in X$, 
        \begin{itemize}
            \item there is a pointed $C^{2,\alpha}$ local diffeomorphism $\Phi: (B_{Q^{-1}}(a),a)\cap \HH_+ \rightarrow(U,x) \subset X,$ for some point $a \in \R^n$, here $\HH_+$ is the upper half space in $\R^n$;
            \item and if $\partial X \cap U \neq \emptyset$, then $\Phi^{-1}(\partial X \cap U) \subset \partial \HH_+$.
        \end{itemize}
		Furthermore, the map $\Phi$ has,
        \begin{itemize}
            \item $e^{-2Q} g_0\leq \Phi^{\star} g \leq e^{2Q} g_0 \text{ as two forms, with } g_0 \text{ the standard Euclidean metric;}$
            \item $\|\p_k \Phi^{\star} g_{ij}\|_{C^{\alpha}} \leq Q,$ where $i,j,k$ stands for indices in Euclidean space.
        \end{itemize}
	\end{definition}
    We will prove two consequences of this condition in the next section: one is the curvature estimates for stable free boundary minimal hypersurface following a result of Chodosh and Li \cite{ChodoLi2021-StableMinimalHypersurfaces},\cite{ChodoLi2022-StableAnisotropicMinimal}-- any two-sided complete stable minimal hypersurface in $\R^4$ is flat; the other is a volume control of balls of fixed radius by a constant depending on the coefficient $Q$ in the definition above.

	Until now we don't really need to restrict the ambient manifold to dimension 4. However, the dimension restriction is essential to the following theorem, where the $\mu-$bubble technique is needed to get a diameter bound using positive scalar curvature.

	\begin{theorem}
		Consider $(X^4,\p X)$ a complete manifold with scalar curvature $R\geq 2$, and $(M,\p M) \hookrightarrow (X,\p X)$ a two-sided stable immersed free boundary minimal hypersurface.
		Let $N$ be a component of $\overline{M \setminus K}$ for some compact set $K$, with $\p N =\p_0 N \cup \p_1 N, \p_0 N \subset \p M$ and $\p_1 N \subset K$. If there is $p\in N$ with $d_N(p,\p_1 N)>10\pi,$ then we can find a Caccioppoli set $\Omega \subset B_{10\pi}(\p_1N)$ whose reduced boundary has that: any component $\Sigma$ of $\overline{\p \Omega \setminus \p N}$ has diameter at most $2\pi$ and intersect with $\p_0 N$ orthogonally.
	\end{theorem}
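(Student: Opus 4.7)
The plan is to adapt the $\mu$-bubble construction of Gromov (as used by Chodosh--Li--Stryker in the boundaryless case) to the free boundary setting. The overall strategy is to minimize a prescribed-mean-curvature functional on a tubular slab around $\partial_1 N$ whose minimizer $\Omega$ is forced to lie in the slab; the boundary components $\Sigma$ of $\Omega$ then carry a $\mu$-bubble stability inequality which, together with $R_X \geq 2$, forces a diameter bound via a Bonnet--Myers-type argument.

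First I would smooth the distance-to-$\partial_1 N$ function on $N$ to obtain $\rho : N \to [0, 10\pi]$ with $\rho = 0$ on (a neighborhood of) $\partial_1 N$ and $\rho \equiv 10\pi$ outside $B_{10\pi}(\partial_1 N)$; the hypothesis $d_N(p, \partial_1 N) > 10\pi$ ensures that the slab $\{0 < \rho < 10\pi\}$ has nontrivial interior. Next fix a reference set $\Omega_0 = \{\rho < 1\}$ and a smooth weight $h : (0, 10\pi) \to \R$ blowing up to $+\infty$ as $\rho \to 0^+$ and to $-\infty$ as $\rho \to 10\pi^-$, tuned so that the ODE linking $h$, $h'$, and $R \geq 2$ which drives the sought diameter bound holds throughout the interval with some margin to spare (a cotangent-type $h$ works). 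Then minimize
$$\mathcal{A}(\Omega) = \mathcal{H}^2(\partial^* \Omega \setminus \partial M) \;-\; \int_N (\chi_\Omega - \chi_{\Omega_0})\, h(\rho)\, d\mathrm{vol}$$
over Caccioppoli sets $\Omega$ with $\Omega \triangle \Omega_0$ compactly contained in the slab. Existence of a minimizer follows from the standard GMT compactness arguments of Zhu and Chodosh--Li; the poles of $h$ serve as barriers preventing $\Omega$ from touching $\{\rho = 0\}$ or $\{\rho = 10\pi\}$, so $\Omega \subset B_{10\pi}(\partial_1 N)$. Interior regularity of $\partial^* \Omega$ is automatic since $\dim M = 3$. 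Free boundary regularity along $\partial_0 N \subset \partial M$, including orthogonal intersection, comes from testing first variation against deformations of $\Omega$ that slide along $\partial M$; these are admissible because $M \hookrightarrow X$ is free boundary, so ambient deformations tangent to $\partial X$ preserve $\partial \Omega \cap \partial M \subset \partial M$.

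For the diameter bound on each component $\Sigma_0$ of $\overline{\partial \Omega \setminus \partial N}$, I would use the $\mu$-bubble stability inequality (the second variation of $\mathcal{A}$ at the minimizer)
$$\int_{\Sigma_0} |\nabla \phi|^2 - (|\sff_{\Sigma_0}|^2 + \Ric_M(\nu,\nu) + \langle \nabla_M h, \nu\rangle)\phi^2 \;\geq\; \int_{\partial \Sigma_0} A_{\partial M}(\nu_\Sigma, \nu_\Sigma) \phi^2,$$
combine it with the trace of the Gauss equation along $\Sigma_0 \hookrightarrow M \hookrightarrow X$, the prescribed mean curvature $H_{\Sigma_0} = h(\rho)$, and $R_X \geq 2$ to bound the total intrinsic Gauss curvature of $\Sigma_0$ from below, and then conclude $\mathrm{diam}(\Sigma_0) \leq 2\pi$ by a conformal Bonnet--Myers argument on $\Sigma_0$. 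The purpose of the weight $h$ is precisely to convert the ambient scalar bound $R_X \geq 2$ into an effective lower Gauss curvature bound on the induced $2$-surface, and the $10\pi$ width of the slab is exactly what accommodates $h$ satisfying the required ODE with margin.

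The main obstacle is the boundary term $A_{\partial M}(\nu_\Sigma, \nu_\Sigma)$ in the free boundary $\mu$-bubble stability: since this theorem does not assume pointwise convexity of $\partial X$, this term has no fixed sign a priori, and absorbing it into the diameter estimate requires using the slack built into the ODE for $h$. Arranging $h$ so that this margin simultaneously absorbs the boundary contribution, controls the $\langle \nabla h, \nu\rangle$ term, and still yields effective Gauss curvature at least $1/4$ on each $\Sigma_0$ (so that Bonnet--Myers gives the $2\pi$ bound) is the delicate step, and is the reason for choosing the slab width $10\pi$ rather than the sharp $\pi$.
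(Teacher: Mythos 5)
Your plan correctly identifies the overall shape — a $\mu$-bubble with cotangent weight on a slab around $\partial_1 N$, existence/regularity via GMT, and a second-variation-driven diameter estimate — but it misses the two ingredients that actually make the paper's argument close, and these are not minor.

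\textbf{Unweighted vs.\ warped $\mu$-bubble.} You propose to minimize $\mathcal{H}^2(\partial^*\Omega) - \int (\chi_\Omega-\chi_{\Omega_0}) h$. The second variation of that functional on $\Sigma_0$ controls $|\sff_{\Sigma_0}|^2 + \Ric_N(\nu,\nu) + \langle\nabla h,\nu\rangle$, and the Gauss equation on $\Sigma_0\hookrightarrow N$ then produces the \emph{intrinsic} scalar curvature $R_N$, not the ambient $R_X$. Since $R_N = R_X - 2\Ric_X(\nu_N,\nu_N) - |\sff_N|^2$ with $H_N=0$ and the last two terms are unbounded a priori, $R_X\geq 2$ gives you nothing at this level. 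The paper resolves this by first using the stability of $M\hookrightarrow X$ to build a positive weight $u$ solving $(\Delta_N + \Ric_X(\nu_N,\nu_N)+|\sff_N|^2)u = 0$ with the Robin condition $\partial_\nu u = A(\nu_N,\nu_N)u$ on $\partial_0 N$, and then minimizes the \emph{warped} functional $\int_{\partial\Omega} u - \int(\chi_\Omega-\chi_{\Omega_0})hu$. It is precisely the $\Delta_N u$ term in the warped second variation that cancels $\Ric_X(\nu_N,\nu_N)+|\sff_N|^2$ and leaves a clean $\frac12(R_X - 1 - R_{\Sigma_0})$ with $R_X\geq 2$ usable. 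Without the warping your diameter estimate cannot get started.

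\textbf{The boundary term $A(\nu_\Sigma,\nu_\Sigma)$.} You say this term ``has no fixed sign a priori'' and propose to ``absorb it into the ODE slack for $h$.'' This cannot work: it is a boundary integral over $\partial\Sigma_0$, while the choice of $h$ only affects interior terms on $\Sigma_0$; there is no mechanism to trade one against the other in the stability inequality. The paper handles it differently: it constructs a \emph{second} conformal weight $w>0$ on $\Sigma_0$ satisfying a Robin condition $\partial_\eta w = A(\nu_\Sigma,\nu_\Sigma)w$ that exactly cancels this boundary term in the second variation, then feeds $f=uw$ into a generalized Bonnet--Myers lemma (the Appendix). The required sign on $\partial_\eta f$ for that lemma reduces, via $A(\nu_\Sigma,\nu_\Sigma)+A(\nu_N,\nu_N) = H_{\partial X} - A(T,T)$, to $H_{\partial X}\geq 0$. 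Note the paper's later restatement (Theorem \ref{width}) does in fact add the assumption $H_{\partial X}\geq 0$; the boundary term is not handled by slack in $h$ but by this extra hypothesis together with the second weight.

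So the construction, existence, interior and free-boundary regularity, and the role of $h$ as barrier are all on track, but the heart of the argument — the warped weight $u$ from stability of $M$, the second weight $w$ for the boundary term, and the $H_{\partial X}\geq 0$ hypothesis feeding the Bonnet--Myers step — is missing, and the proposed substitutes do not work.
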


	We will introduce the notion of Caccioppoli sets and the $\mu$-bubble technique in section \ref{mububble}.
    Now we can state our main theorem properly. 
	\begin{theorem}
		Consider $(X^4, \partial X)$ a complete Riemannian manifold with weakly convex boundary, $R \geq 2$, $\Ric_2 \geq 0$,  and weakly bounded geometry, then any complete stable two-sided immersion of free boundary minimal hypersurface $(M,\partial M)\hookrightarrow (X,\partial X)$ is totally geodesic, $Ric(\eta,\eta)=0$ along $M$ and $A(\eta,\eta)=0$ along $\p M$, for $\eta$ a choice of normal bundle over $M$.
	\end{theorem}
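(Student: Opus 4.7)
The strategy mirrors Chodosh--Li--Stryker in the closed setting: drop the nonnegative ambient contributions from the stability inequality and produce a sequence of cutoffs whose Dirichlet energies vanish in the limit, forcing the remaining curvature quantities to vanish. Starting from the stability inequality
$$\int_M |\nabla \phi|^2 \geq \int_M (|\sff|^2 + \Ric_X(\nu,\nu))\phi^2 + \int_{\p M} A(\nu,\nu)\phi^2,$$
the remark following the definition of $\Ric_2\geq 0$ yields $\Ric_X(\nu,\nu)\geq 0$, and the weakly convex boundary hypothesis yields $A(\nu,\nu)\geq 0$. So it is enough to exhibit compactly supported Lipschitz $\phi_i\geq 0$ on $M$ with $\phi_i\nearrow 1$ pointwise and $\int_M|\nabla\phi_i|^2\to 0$: monotone convergence and nonnegativity of the three summands then force $|\sff|\equiv 0$, $\Ric_X(\nu,\nu)\equiv 0$ on $M$, and $A(\nu,\nu)\equiv 0$ on $\p M$. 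If $M$ is compact this is immediate by $\phi\equiv 1$; note also that $R\geq 2$ is used only to construct the cutoffs, not to drop terms.

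\textbf{The noncompact case.} Assume $M$ is noncompact. By the at-most-one-non-parabolic-end theorem stated in the introduction, I split into two branches. If every end of $M$ is parabolic, the mixed Dirichlet--Neumann harmonic analysis (relying on the corner regularity of Miranda and Azzam--Kreyszig cited above) directly provides Ahlfors-type cutoff sequences with vanishing Dirichlet energy, finishing the proof. Otherwise there is a unique non-parabolic end $E$, and I will upgrade the analysis to at most linear volume growth on $E$; the classical logarithmic cutoff $\phi_R=\max\{0,\min\{1,\,2-\log d(\cdot,p)/\log R\}\}$ then satisfies $\int_M|\nabla\phi_R|^2\leq C/\log R\to 0$.

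\textbf{Linear volume growth on $E$ via $\mu$-bubbles.} Apply the $\mu$-bubble theorem stated above to a component $N$ of $\overline{M\setminus K}$ inside $E$: each component of the resulting free-boundary Caccioppoli slice $\overline{\p\Omega\setminus \p N}$ has intrinsic diameter at most $2\pi$ and meets $\p M$ orthogonally. Iterating slab by slab exhausts $E$ by annuli of bounded width, each containing only finitely many such pieces. To convert this combinatorial picture into a quantitative volume bound I invoke Lemma 1.3: weakly bounded geometry of $X$ together with a uniform bound on $|\sff_M|$ (the latter coming from the Chodosh--Li curvature estimate applied in the free-boundary setting after a blow-up) give $\mathrm{Vol}(B^M_1(x))\leq C$ uniformly. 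Combining the slab count with this uniform ball bound yields $\mathrm{Vol}(B^M_R(p)\cap E)\leq CR$, closing the loop.

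\textbf{Main obstacle.} The hardest step is orchestrating the $\mu$-bubble in the free-boundary category: one must show existence of the bubble with orthogonal intersection at $\p X$, control the possible corner phenomena where $\p M$ meets the slicing surface, and propagate the one-slice diameter bound into a genuinely \emph{global} linear volume estimate on $E$ rather than mere control of a single slab. The parabolic branch is secondary but still delicate, since the corner regularity results cited above are essential for even defining a useful capacity with mixed boundary data and hence for producing the Ahlfors-type cutoffs.
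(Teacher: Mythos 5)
Your overall plan is the right one and matches the paper at the structural level: drop the nonnegative ambient and boundary terms, use Theorem~5.5 to get one nonparabolic end, use the $\mu$-bubble width bound plus weakly bounded geometry to control volume, and feed a vanishing-energy cutoff into the stability inequality. However, there is a genuine gap in the nonparabolic branch, and the step ``iterating slab by slab'' is substantially undersold.

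The gap: when there is one nonparabolic end $E$, your cutoff $\phi_R=\max\{0,\min\{1,2-\log d(\cdot,p)/\log R\}\}$ is defined on all of $M$, and the energy estimate $\int_M|\nabla\phi_R|^2\lesssim 1/\log R$ needs linear volume growth \emph{globally}. But Theorem~5.12 only gives $\mathrm{Vol}(M_k)\leq C_0$ for the annuli $M_k$ of the nonparabolic end; the complement of $K\cup E$ consists of parabolic components $P_k$ on which there is no volume control at all. Your logarithmic cutoff has a gradient of size $\sim 1/(d\log R)$ on every parabolic piece meeting the annulus $\{R<d<R^2\}$, and those pieces can carry arbitrarily large volume, so $\int_M|\nabla\phi_R|^2$ is not controlled. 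This is precisely why the paper does not use a distance-based cutoff: it decomposes $M$ into the nonparabolic annuli $M_k$ and the parabolic blocks $P_k$, and builds a \emph{hybrid} test function $f_i$ that decays linearly in $\rho$ on $\bigcup M_k$ (where $\mathrm{Vol}(M_k)\leq C_0$ makes the energy $\lesssim iC_0/(iL)^2$) and is $\phi(kL)u_k$ on each $P_k$ with $u_k$ taken from Lemma~\ref{parabolic0energy} so that $\int_{P_k}|\nabla u_k|^2<1/i^2$. The two contributions are summed separately and both vanish as $i\to\infty$. You need the second half of this construction; a single radial cutoff does not suffice.

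Secondary issue: ``each \ldots containing only finitely many such pieces'' does not by itself give $\mathrm{Vol}(M_k)\leq C_0$. The paper's route is: (i) reduce to $M$ simply connected so that each $M_k$ is connected; (ii) locate a $\mu$-bubble slice $\Sigma_k$ with $\mathrm{diam}(\Sigma_k)\leq 2\pi$ that separates $\partial E_k$ from $\partial E_{k+1}$; (iii) combine with $L=20\pi$ to conclude $\mathrm{diam}(M_k)\leq 4L+c$; (iv) then invoke Lemma~\ref{WBGvol} to bound the volume of a ball of radius $4L+c$. Without the connectedness step and the explicit diameter argument, ``finitely many slices'' yields neither a uniform diameter nor a uniform volume bound for the slab, so you cannot yet invoke the ball-volume lemma.
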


    \section{Weakly Bounded Geometry}
    We start with the first consequence, curvature estimates for free boundary stable minimal hypersurface in manifolds with weakly bounded geometry.
    \begin{lemma} \label{blowup}
		Let $(X^n,\partial X,g)$ be a complete Riemannian manifold with weakly bounded geometry, and $(M^{n-1},\p M) \hookrightarrow (X,\p X)$ a complete stable immersed free boundary minimal hypersurface, then
		\begin{equation*}
			\sup_{q\in M} |\sff(q)| \leq C <\infty,
		\end{equation*}
		for a constant $C=C(X,g)$ independent of $M$.
	\end{lemma}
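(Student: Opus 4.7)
The plan is a standard blow-up contradiction in the spirit of the bounded-geometry curvature estimate of Chodosh--Li, with extra care taken at $\partial X$ for the free boundary case. Suppose for contradiction that the conclusion fails, so that there exist $q_i\in M$ with $|\sff(q_i)|\to\infty$. First, I would run an Ecker--Huisken-type point-picking argument, carried out inside the charts $\Phi_i$ supplied by Definition~\ref{defWBG}, to produce $\tilde q_i\in M$ with $\lambda_i := |\sff(\tilde q_i)|\to\infty$ and $|\sff|\le 2\lambda_i$ on the intrinsic ball $B^M_{s_i/\lambda_i}(\tilde q_i)$ for some $s_i\to\infty$. Since $\lambda_i\to\infty$, eventually this ball sits inside a single chart.

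Next, I would pull $M$ back via $\Phi_i$ and rescale the chart metric by $\lambda_i^2$, producing metrics $g_i$ on balls of radius $\lambda_i Q^{-1}\to\infty$ in $\R^n$ (intersected with $\lambda_i\HH_+$ if $\tilde q_i$ stays bounded from $\partial X$). The uniform $C^{2,\alpha}$ control in Definition~\ref{defWBG} forces $g_i\to g_0$ in $C^{1,\alpha}_\loc$ on $\R^n$ or $\HH_+$; moreover the second fundamental form of $\partial\HH_+$ in $g_i$ is $O(\lambda_i^{-1})\to 0$, so $\partial\HH_+$ becomes totally geodesic in the limit. The rescaled hypersurfaces $M_i$ pass through the origin with $|\sff_{M_i}|(0)=1$ and satisfy $|\sff_{M_i}|\le 2$ on intrinsic balls of radius $s_i\to\infty$, so standard compactness for (free boundary) minimal immersions with a uniform second fundamental form bound, combined with the $C^{1,\alpha}$ convergence of $g_i$ and the orthogonal intersection condition, yields a $C^{2,\alpha}_\loc$ subsequential limit $M_\infty$: either a complete two-sided stable minimal hypersurface in $\R^n$, or a complete free boundary two-sided stable minimal hypersurface in $\HH_+$ meeting $\partial\HH_+$ orthogonally, and in either case $|\sff_{M_\infty}|(0)=1$.

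Finally, I would apply the Chodosh--Li Bernstein-type theorem cited in the introduction. In the interior case it forces $M_\infty$ to be flat, contradicting $|\sff_{M_\infty}|(0)=1$. In the boundary case I would double $M_\infty$ by reflection across the totally geodesic $\partial\HH_+$: the orthogonal intersection together with elliptic regularity for the minimal surface equation makes the double smooth, and stability passes to the double by symmetrization of test functions, the boundary term $\int_{\partial M_\infty} A(\nu,\nu)\phi^2$ vanishing because $A\equiv 0$ in the limit. Chodosh--Li applied to the double then yields a contradiction. The most delicate step I expect is precisely this boundary case: verifying smoothness of the rescaled limit up to $\partial\HH_+$ and preservation of stability under doubling, both of which rest on the vanishing of $A$ in the limit --- which is exactly what weakly bounded geometry delivers through the $\lambda_i^{-1}$ rescaling of $A$.
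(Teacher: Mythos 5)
Your proposal follows essentially the same blow-up strategy as the paper: point-picking at scale $|\sff|^{-1}$, rescaling inside the weakly-bounded-geometry charts to obtain $C^{1,\alpha}_{\loc}$ convergence to the Euclidean metric, extracting a complete stable minimal limit in $\R^4$ or a free boundary one in $\HH_+$ meeting $\partial\HH_+$ orthogonally, and then invoking reflection plus the Chodosh--Li Bernstein theorem. The one technicality you gloss over is that $M$ is only immersed, so the paper passes through an abstract pointed manifold $(S_i,s_i)$ with a local diffeomorphism to $M_i$ and an immersion into the chart (the ``pullback operation'' of Chodosh--Li--Stryker, extended here over the free boundary portion) rather than literally pulling $M$ back via $\Phi_i$; modulo that bookkeeping, the argument matches.
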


    \begin{proof}
        We follow the proof as given in \cite{ChodoLiStryk2022-CompleteStableMinimal}.
		We prove that for any compact set $K \subset M$, we have the following curvature estimates:
		\begin{equation}
			\max_{q\in K} |\sff(q)|\min\{1, d_M(q,\partial_1 K)\} \leq C <\infty,
		\end{equation}
		with $\p M \cap K= \p_0 K$ and $\p K \setminus \p M= \p_1 K$,

        Towards a contradiction, assume there is a sequence of compact sets $K_i \subset M_i \hookrightarrow X$ the latter being a complete stable  immersed free boundary minimal hypersurface, and \begin{equation}
            \max_{q\in K_i} |\sff_i(q)|\min\{1,d_{M_i}(q,\p_1 K_i)\} \rightarrow \infty.
        \end{equation}
        Then by compactness of $K_i$ we can find $p_i \in K_i \setminus \p_1 K_i$ with
        \begin{equation}
            |\sff_i(p_i)|\min\{1,d_{M_i}(p_i,\p_1 K_i)\}=\max_{q\in K_i} |\sff_i(q)|\min\{1,d_{M_i}(q,\p_1 K_i)\} \rightarrow \infty.
        \end{equation}
        Define $r_i:=|\sff_i(p_i)|^{-1}\rightarrow 0$ and $x_i$ the image of $p_i$ in $X$. Using the weakly bounded geometry assumption and a pullback operation as in \cite{ChodoLiStryk2022-CompleteStableMinimal} Appendix B, we can find a sequence of pointed 3-manifolds $(S_i,s_i)$, local diffeomorphisms $\Psi_i:(S_i,s_i) \rightarrow (K_i,p_i)$ with the boundary components mapped correspondingly $\Psi_i (\p_l S_i)= \p_l K_i (l=0,1)$, and immersions $F_i:(S_i,s_i)\hookrightarrow (B(a_i,Q^{-1})\cap \HH_+,a_i)$ so that the following diagram commutes (writing $B_i:=B(a_i,Q^{-1})\cap \HH_+$), and that $F_i:S_i \rightarrow (B_i,\Phi^{\star}_i g)$ is a two-sided stable minimal immersion, in the free boundary sense along $\p_0 S_i$ but not $\p_1 S_i$, 
		\[
		\xymatrix{
		S_{i} \ar[r]^{F_{i}} \ar[d]_{\Psi_{i}}& B_i\ar[d]^{\Phi_{i}}\\
		M_i \ar[r] & X.
		}
		\]
		
		Note that in the weakly bounded geometry condition we may also require the Euclidean norm of $a_i$ is no more than $Q^{-1}$.

        We can now consider the blow-up sequence
        \begin{equation}
            \tilde{F_i}:(S_i,s_i) \rightarrow (\hat{B_i}, a_i), \,\, \hat{B_i}=B(a_i,r_i^{-1}Q^{-1})\cap \HH_+ \text{ with metric } 
			r_i^{-2}\Phi^{\star}_i g.
        \end{equation}
		By assumption of weakly bounded geometry, $(\hat{B_i}, a_i)$ converges to the Euclidean metric in $C^{1,\alpha}$ on any compact sets. We now consider $S_i$ with metric induced from $\tilde{F_i}$. By the point picking argument, for any point $q$ in a ball of fixed radius $R>0$ around $s_i$, we have a uniform bound on $|\tilde{\sff}_{S_i}(q)|\leq C(R)$. The weakly bounded geometry condition then gives $|\hat{\sff}_{S_i}(q)|\leq C'(R)$ for the immersion $\hat{F}:(S_i,s_i)\rightarrow (\hat{B_i},a_i)$, the latter with Euclidean metric $g_0$. This allows us to write a connected component of $B^{S_i}_{\mu}(q)$ as a graph of a function $f_i$ over a subset $B_r(0) \cap \HH_i$ of $T_q S_i$ for some $\mu,r >0$, here $B_r(0)$ is the Euclidean ball and $\HH_i$ is some halfspace in $\R^3$ that may not go through the origin. 
		
		Now following the same argument as in \cite{ChodoLiStryk2022-CompleteStableMinimal}, we know that the functions $f_i$ have uniformly bounded $C^{2,\alpha}$ norm. To continue the argument as in \cite{ChodoLiStryk2022-CompleteStableMinimal}, we can extend the graph $f_i$ from $B_r(0)\cap \HH_i$ to all of $B_r(0)$ and $f_i$ still has uniformly bounded $C^{2,\alpha}$ norm (but the extended part is not minimal as a hypersurface in $\hat{B_i}$). This gives us that on any bounded set, $(S_i,s_i)$ has injectivity radius bounded away from $0$ and bounded sectional curvature, with respect to the metric $(\tilde{F_i})^{\star} (r_i^{-2}\Phi_i^{\star}g)$.
		
		Then we can use the same argument in \cite{ChodoLiStryk2022-CompleteStableMinimal} and pass to the limit, to get  a subsequence converging to a complete minimal immersion $(S_{\infty},s_{\infty})$ in $\R^4$, or one that is minimal on $\HH_+$ and that intersect the $\p \HH_+$ orthogonally, furthermore $|\sff_{\infty}(s_{\infty})|=1$ (note that under this blow-up sequence, $\tilde{\sff_i}(s_i)=1$ by the choice of $r_i$ and $\tilde{d}_{S_i}(s_i,\p_1 S_i) \rightarrow \infty$). In the latter case we can use reflection principle (see for example Guang-Li-Zhou \cite{GuangLiZhou2020-CurvatureEstimatesStable}) and reduce to a complete minimal immersion in $\R^4$, which is a contradiction to the result of Chodosh and Li \cite{ChodoLi2021-StableMinimalHypersurfaces},\cite{ChodoLi2022-StableAnisotropicMinimal}-- any complete two-sided stable minimal hypersurface in $\R^4$ is flat.
    \end{proof}
	\begin{remark}
		The pullback operation in \cite{ChodoLiStryk2022-CompleteStableMinimal} applies to open manifolds without boundary(an interior ball of small radius in $K_i$ near $p_i$), in our case for the proof above, we need to extend over the free boundary part of this small ball, apply \cite{ChodoLiStryk2022-CompleteStableMinimal} to the extended open manifold and one can check that we still get a free boundary immersion near $\p_0 S_i$.
	\end{remark}

    Now we prove the following volume control theorem for a manifold with weakly bounded geometry. This argument follows as in Lemma 2.1 in Bamler-Zhang \cite{BamleZhang2015-HeatKernelCurvature}.
	In this paper given an immersion $M \hookrightarrow X$, we write the intrinsic distance function as $d_M(\cdot,\cdot)$ and extrinsic distance function as $d_X(\cdot,\cdot)$.
    \begin{lemma} \label{WBGvol}
        Let $(X^n,\partial X,g)$ be a complete Riemannian manifold with weakly bounded geometry at scale $Q$, and $(M^{n-1},\p M) \hookrightarrow (X,\p X)$ a complete immersed submanifold with bounded second fundamental form, then the following is true,
        \begin{itemize}
            \item there is $0<N<\infty$ such that for any $p\in M$, the maximum number of disjoint balls of radius $\delta$ centered around points in $B^M_{4\delta}(p)$ is bounded by N,
            \item for any $R>0$, there is a constant $C=C(R,Q)$ such that the volume of balls of radius $R$ around any point in $M$ is bounded by $C$.
        \end{itemize}
    \end{lemma}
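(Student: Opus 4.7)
The plan is to work chart-by-chart in the weakly bounded geometry structure on $X$ and exploit the uniform bound $|\sff|\le \Lambda$ on $M$ to linearize the picture at a fixed small scale. Concretely, I would first produce a length scale $r_0 = r_0(Q,\Lambda,n)>0$ with the following property: for every $p \in M$ sitting over $x \in X$, composing the immersion with the inverse chart $\Phi_x^{-1}$ from Definition \ref{defWBG} represents the component of $M$ through $p$ on the intrinsic ball $B^M_{r_0}(p)$ as a $C^{2,\alpha}$ graph (a half-graph over $\p \mathbb{H}_+$ if $p \in \p M$) of a function on an affine hyperplane, with $C^{2,\alpha}$-norm bounded in terms of $Q$ and $\Lambda$ only. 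From such a graph representation one reads off the two-sided volume comparison
\[
c\,r^{n-1} \;\le\; \operatorname{vol}_M\!\bigl(B^M_r(p)\bigr) \;\le\; C\,r^{n-1}, \qquad 0 < r \le r_0,
\]
with $c,C$ depending only on $Q, \Lambda, n$.

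For the first bullet I would fix $\delta := r_0/10$. Any disjoint family $\{B^M_\delta(p_i)\}_{i=1}^k$ with centers in $B^M_{4\delta}(p)$ is contained in $B^M_{5\delta}(p)$, so summing the lower bound in the above comparison against the upper bound yields $k\,c\,\delta^{n-1}\le C(5\delta)^{n-1}$, whence $k\le N := 5^{n-1}C/c$, uniformly in $p$ and $M$.

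The second bullet I would obtain by the inductive covering argument of Bamler--Zhang. Fix $R>0$ and choose a maximal $\delta$-separated subset $\{q_j\}_{j=1}^K$ of $B^M_R(p)$, so the $B^M_{\delta}(q_j)$ are disjoint while the $B^M_{2\delta}(q_j)$ cover $B^M_R(p)$. Treating $p$ and the $q_j$ as vertices of a graph in which two points are adjacent whenever they lie at intrinsic distance at most $4\delta$, one sees that any $q_j$ is reachable from $p$ in at most $\lceil R/(3\delta)\rceil+1$ steps, and the first bullet forces each vertex to have at most $N$ neighbors among the $\{q_j\}$. Hence $K \le N^{\lceil R/\delta\rceil}$; multiplying by the uniform upper bound $C(2\delta)^{n-1}$ on the volume of each covering ball gives $\operatorname{vol}_M(B^M_R(p)) \le C(R,Q)$ as required.

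The main obstacle I anticipate is the uniform graph representation in the first step. The bound on $|\sff|$ is intrinsic with respect to the pullback metric $\Phi_x^{\star}g$, not the Euclidean metric on the chart, so one has to convert it into bounds on the Hessian of the graph function using only the $C^{2,\alpha}$ control on $\Phi_x^{\star}g$, with all constants tracked down to depend solely on $Q$ and $\Lambda$. At free boundary points the straightened boundary $\p \mathbb{H}_+$ need not agree with the affine tangent plane one would like to graph over, so a small coordinate rotation (or a boundary-adapted normal chart) is needed before the implicit function theorem applies uniformly. Once this uniform graph description is in hand, the packing estimate and the chaining step are essentially combinatorial.
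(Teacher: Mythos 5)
Your proposal is correct and takes essentially the same approach as the paper: both reduce the first bullet to a small-scale two-sided volume comparison for intrinsic balls (the paper via the pullback $(S,\Psi)$ of Lemma~\ref{blowup} and the identity $\Psi(B^S_r(s))=B^M_r(p)$, you via a uniform $C^{2,\alpha}$ graph representation in the chart, which is the same local picture once one lifts through the local diffeomorphism $\Phi$), and both then run the Bamler--Zhang inductive covering/chaining for the second bullet. The graph-representation step you flag as the main obstacle is exactly what the paper's pullback operation packages, and it is standard given bounded $|\sff|$ together with the $C^{1,\alpha}$ control on $\Phi^{\star}g$ from weak bounded geometry; your remaining discrepancies (the separation-versus-covering radii in your maximal net, and the exact step count $\lceil R/(3\delta)\rceil$) are harmless bookkeeping.
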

    
    \begin{proof}
		To prove the first claim, we first prove that there is a fixed $0<r_0<Q^{-1}$ such that for any point  $p$ in $M$, we have for any $r<r_0$, $\Psi(B^S_r(s)) = B^M_r(p)$, here $\Psi$ comes from applying the pullback operation as in the previous lemma, i.e. we have the following commutative diagram, with local diffeomorphism $\Psi: (S,s) \rightarrow (M,p)$ and immersion $F:(S,s)\rightarrow (B,a)$, with $B=B_{Q^{-1}}(a) \cap \HH_+$,
		\[
		\xymatrix{
		(S,s) \ar[r]^{F} \ar[d]_{\Psi}& (B,a)\ar[d]^{\Phi}\\
		(M,p) \ar[r] & (X,x)
		}
		\]

		Note since image of any path in $B^S_r(s)$ is again a path in $B^M_r(p)$ and $\Psi$ is a local isometry, we have $\Psi(B^S_r(s)) \subset B^M_r(p)$. To prove the other direction, we look at a point $q$ connected to $p$ by a shortest path of unit speed $I(t):[0,l] \rightarrow M(l<r)$ , again since $\Psi$ is a local isometry we can find a path in $S$ with unit speed $J(t):[0,\epsilon] \rightarrow S, J(0)=s$, that is mapped isometrically to $I$ under $\Psi$. Writing Im$(I)$ for the image of $I(t)$ in M, we note that the preimage $\Psi^{-1}($Im$(I)$) is a union of paths in $S$ since $\Psi$ is a local isometry, one of the component must contain $J(t)$, which we denote as $J(t)$ from now on. The length of $J$ (denoted as $t_0$) is at least $l$, since if not, then as $t\rightarrow t_0$, $\Psi(J(t))$ converges to a point on the path $I(t)$, whose preimage in $J$ still lies in $B^S_r(s)$ and can be used to extend $J$ longer. Therefore $J$ must also reach a preimage of $q$ at length $l<r$.
		So we get $B^M_r(p)\subset\Psi(B^S_r(s))$.

		We now prove the first claim. Let $8\delta < r_0$, then we have that $\Psi(B^S_{4\delta}(s))=B^M_{4\delta}(p)$ by the above proof. For any disjoint balls $B^M_{\delta}(p_i)$ with $p_i \in B^M_{4\delta}(p)$, we must have $s_i \in B^S_{4\delta}(s)$, so that $\Psi(B^S_{\delta})(s_i)=B^M_{\delta}(p_i)$, therefore $B^S_{\delta}(s_i)$ are disjoint. 
	
		Note that $S \rightarrow B$ also has bounded second fundamental form, and the weakly bounded geometry assumption says the pullback metric via $\Phi$ is comparable to the Euclidean metric as two forms, which
		implies that the volume of $B^S_{5\delta}(s)$ is bounded above by $C \delta^{n-1}$, and the volume of $B^S_{\delta}(s_i)$ is bounded from below by $\C' \delta^{n-1}$ for some constant $C,C'$ depending on $Q$(here we may choose $r_0$ to be even smaller depending on the second fundamental form). Therefore, the number of such points $s_i$ is bounded by a fixed constant $N$, and so is the number of $p_i$.

		We now prove the second claim.
		We want to bound the volume of $B^M_R(p)$ for any given $R>0$ and any $p\in M$, and we may assume $R>r_0>8\delta$. Let $(B^M_{\delta}(p_i))_{i=1}^k$ be a choice of pairwise disjoint balls with centers in $B^M_{4\delta}(p)$ and with the maximum $k$ ($k \leq N$). By maximality, 
		\begin{equation}\label{bam1}
			B^M_{4\delta}(p)\subset \cup_{i=1}^k B^M_{2\delta}(p_i).
		\end{equation}
		We now argue that for all $r \geq 4\delta$, 
		\begin{equation}\label{bam2}
			B^M_{2\delta+r}(p)\subset \cup_{i=1}^k B^M_{r}(p_i).
		\end{equation}
		Consider a point $y \in B^M_{2\delta+r}(p)$, and a path $\gamma(t)$ (reparametrized by arc length) from $p$ to $y$ with length $l<r+2\delta$. Then by (\ref{bam1}) there is some point $p_i$ so that $\gamma(4\delta) \in \overline{B^M_{2\delta}(p_i)}$. We have,
		\begin{equation*}
			d(p_i,y) \leq l-4\delta+d(\gamma(4\delta),p_i) \leq l- 2\delta<r,
		\end{equation*}
		completing the proof of (\ref{bam2}).

		We now prove by induction that for any $k\geq 2$ and any $q \in M$, the volume of $B^M_{2\delta k}(q)$ is bounded by a constant $C^k$ with $C=C(Q,N,\delta)$. For $k=2$, this is already proved in the first claim. Now assuming the claim is true for some $k \geq 2$, then using (\ref{bam2}) for $r=2\delta k$ gives,
		\begin{equation*}
			|B^M_{2(k+1)\delta}(q)|\leq N C^k \leq C^{k+1}.
		\end{equation*} 
		Choosing $k$ large enough we can bound the volume of $B^M_R(q)$ for any given $R>0$. 
    \end{proof}

	\section{Parabolicity on Manifolds with Noncompact Boundary} \label{pnp}

	Given a manifold with boundary $(M^n,\p M)$, and any continuous submanifold $E^n$, recall we reserve the notation $\p E$ to denote the manifold boundary of $E$ (instead of as a subset in $M$). Therefore we can decompose $\p E =\p_1E \cup \p_0 E$ where $\p_0 E=\p E\cap \p M$ and $\p_1 E =\overline{\p E \setminus \p M}$. And we say that $\p_1 E \cap \p_0 E$ at an angle $\theta(x) \in (0,\pi)$, if for any $x \in \p_1 E \cap \p_0 E$ 
%	and any orthonormal basis of $T_x M$, 
	the hyperplane $T_x \p_1 E$ and $T_x \p_0 E$ intersect at angle $\theta(x)$ in the interior of $E$. 
%	Note the definition is independent of the choice of orthonormal basis. 
	In this paper we only consider domains $E$ in $M$ that are smooth except at the intersections $\p_1 E \cap \p_0 E$, we call these points corner points.

	\begin{definition}
		Consider $(M^n,\partial M)$ complete manifold with noncompact boundary. An end of $(M,\partial M)$ is a sequence of complete continuous n-dimensional submanifold $(E_k)_{k\geq 0}$ with boundary, where each $E_k$ is a noncompact connected component of $M\setminus C_k$ for compact continuous submanifold $C_k \subset C_{k+1}$, and $E_{k+1} \subset E_k$. 

		When $C_k=C_{k+1}=K$and $E_k=E_{k+1}=E$ for all $k \geq 0$, we will also call $E$ an end with respect to the compact set $K$.  
	\end{definition}

	\begin{definition}
		For any end $E$ of $M$, we  say that $\partial E $ intersect with the boundary $\partial M$ transversally (or at an angle $\theta(x) \in (0,\pi) $) if $\partial_0 E$ and $\partial_1 E$ intersect transversally (or at an angle $\theta(x) $) as submanifolds in $M$, that is, for any point $x \in \p_1 E \cap \p_0 E$, the tangent planes $T_x\p_0 E$ and $T_x\p_1 E$ are not equal (or at an angle $\theta(x)$).
	\end{definition}

	In the following theorem we show how we can purturb the angle of intersection of an end in an arbitrarily small neighborhood.

	\begin{theorem}\label{IntersectionAngle}
		Consider $(M^n,\partial M)$ complete orientable manifold with noncompact boundary and let $d_M(p,\cdot)$ be the continuous distance function from a fixed point $p\in M$ (we will mollify it to be smooth on a compact set in $M$ without changing the notation). Then for almost every $c>0$, the preimage $E_c=d^{-1}_M([c,\infty))$ is a submanifold with boundary and intersects with the boundary $\partial M$ transversally. Furthermore given any $\delta>0$ and constant $\theta \in (0,\frac{\pi}{2})$, we can find another continuous submanifold $E$ within the $\delta$-neighborhood of $E_c$ so that the angle between the tangent planes $T_x \p_1 E$ and $T_x \p_0 E$ is equal to $\theta.$ The submanifold $E$ is smooth except at the corners.
	\end{theorem}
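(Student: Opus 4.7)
My plan is to treat the two assertions separately. For the first, I would invoke Sard's theorem twice: after mollifying $d_M(p, \cdot)$ to a smooth function (still denoted $d_M$) on the compact region of interest, Sard's theorem applied to $d_M : M \to \mathbb{R}$ shows that almost every $c > 0$ is a regular value, so $d_M^{-1}(c)$ is a smooth embedded hypersurface and $E_c$ is a submanifold with boundary $\partial_1 E_c = d_M^{-1}(c)$. Applying Sard a second time to the restriction $d_M|_{\partial M}$ shows that almost every $c$ is a regular value there; this regularity is precisely the condition that $d_M^{-1}(c)$ meet $\partial M$ transversally. Intersecting the two full-measure sets of regular values then yields the first claim.

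For the perturbation, fix such a $c$ and set $\Gamma = \partial_1 E_c \cap \partial M$, which is compact (being closed inside the compact sphere $d_M^{-1}(c)$) and a smooth codimension-$2$ submanifold of $M$. The idea is to keep $\Gamma$ fixed as the new corner and tilt $\partial_1 E_c$ only in a small tube around $\Gamma$. I would use the rank-$2$ normal bundle $N\Gamma \subset TM$ and the exponential map to set up Fermi-type coordinates $(y, s, t)$ on a tubular neighborhood of $\Gamma$ of radius $\rho$ (with $\rho$ small, to be chosen): $y \in \Gamma$, $t \geq 0$ is the distance to $\partial M$ (so $\partial M = \{t = 0\}$), and $s$ is the signed distance along $\partial M$ to $\Gamma$. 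In these coordinates $\partial_1 E_c$ is locally a smooth graph $s = H(t, y)$ with $H(0, y) = 0$ and $\partial_t H(0, y) = \cot \theta(y)$, where $\theta(y) \in (0, \pi)$ is the current intersection angle.

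The modification is then the explicit interpolation
\[
\widetilde{H}(t, y) \;=\; H(t, y) + \chi(t)\bigl(\cot \theta - \partial_t H(0, y)\bigr)\, t,
\]
where $\chi : [0, \infty) \to [0, 1]$ is a smooth cutoff with $\chi(0) = 1$ and support in $[0, \rho/2]$. By construction $\widetilde H(0, y) = 0$, so $\Gamma$ remains the corner; $\partial_t \widetilde H(0, y) = \cot \theta$, so the new intersection angle is $\theta$ everywhere on $\Gamma$; and $\widetilde H \equiv H$ outside a $\rho/2$-tube around $\Gamma$. Choosing $\rho$ small, depending on $\delta$ and on the compact range of $\theta(y)$, forces the $C^0$-size of the modification to be less than $\delta$, so the region $E$ obtained by replacing $\partial_1 E_c$ with the perturbed graph lies in the $\delta$-neighborhood of $E_c$, is smooth away from $\Gamma$, and meets $\partial M$ at the constant angle $\theta$.

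The main technical obstacle is setting up the Fermi coordinates globally on a tube around $\Gamma$. The cleanest route is via the exponential map of $N\Gamma$, using the orientations of $M$ and of $\Gamma \subset \partial M$ (the latter inherited as the boundary of $\partial_0 E_c$ in $\partial M$) to fix a global frame of the rank-$2$ normal bundle. Failing that, one covers the compact $\Gamma$ by finitely many local Fermi charts and patches the local modifications via a partition of unity; because the corrections above affect only the slope at $t = 0$ and are linear in $t$ near $\Gamma$, the patched modification is still smooth, achieves the prescribed angle $\theta$, and respects the $\delta$-neighborhood bound.
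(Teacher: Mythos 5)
Your proposal is correct and follows essentially the same route as the paper: Sard's theorem applied to the mollified distance function and to its restriction to $\partial M$ for the transversality statement, followed by Fermi-type coordinates along the corner set $\Gamma=\partial_1 E_c\cap\partial M$ in which $\partial_1 E_c$ is a graph over the normal direction (justified, as in the paper, by the transversality just established), and a cutoff modification of that graph near $t=0$ to prescribe the slope. Your interpolation $\widetilde H = H + \chi(t)\bigl(\cot\theta-\partial_t H(0,y)\bigr)t$ is just a cleaner packaging of the paper's ``concatenate with the linear graph and smooth the jump with a bump function.''
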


	\begin{proof}
		We first consider the continuous distance function $h=d_M(p,\cdot)$, for any $N>0$ and any $\delta >0$, there is a mollification $\bar{h}$ such that $\bar{h}$ is smooth on $B^M_N(p)$ and $\|h-\bar{h}\|_{L^{\infty}(B^M_N(p))}<\delta/2$. Then it is a standard proof (see for example in \cite{GuillPolla1974-DifferentialTopology} section 2.1) that for almost every $0<c<N$, the map $d\bar{h}_x: T_xM \rightarrow \R$ and the map $d\bar{h}_x:T_x \p M \rightarrow \R$ are both nonzero, and the preimage $E_c=\bar{h}^{-1}([c,\infty)$ is a continuous submanifold intersecting $\p M$ transversally and is smooth except at the corners. We now show that we can purturb to arrange the angle of interesection to be any constant $\theta\in (0,\frac{\pi}{2})$ in a $\frac{\delta}{2}-$neighborhood of $E_c$.

		We denote the intersection $\partial M \cap \partial E_c=:I,$ note $I$ is orientable because it's the preimage of the regular value $s$ of the function $d(p,\cdot)$ restricted to the boundary by \cite{Lee2012-IntroductionSmoothManifoldsa} Proposition 15.23.
		Using a unit normal vector field $\mu$ of $I\subset \partial M$ that is outwarding pointing with respect to $E_c$, we find a local coordinates $(z,t)$ within the $\delta'-$neighborhood of $I \subset \partial M$($\delta'$ to be decided), here $(z,t)$ means $(z,0)\in I$ and $(z,t)$ stands for the point $\exp^{\p M}_{(z,0)}(t \mu)$(the exponential map on $\p M$). Now similarly using the outward pointing unit normal $\nu$ of $\partial M \subset M,$ we build a local coordinates denoted as $(z,t,r)=\exp^M_{(z,t)}(r\nu)$. Denote the projection map  onto the last coordinate $r$ as $P_r:E_c\rightarrow \R,$ then $0$ is a regular value of $P_r$ because if $dP_r(x): T_xE_s \rightarrow \R$ is zero for some point $x\in I=P_r^{-1}(0)$, then $T_x E_c \subset T_x \partial M$, contradiction to the transversal intersection of them we just proved. Further note $dP_r$ is zero restricted to $T_x \partial M,$ espeically in the directions on $T_x I.$ Now fix a point $(z_0,0,0) \in I,$ and consider the slice $S_{z_0}=\{(z,t,r)\in E_c,z=z_0\}$ in the $rt-$plane, then $P_r$ restricted to $S_{z_0}$ has that $dP_r$ is nonzero around a neighborhood of origin, so the tangent line along $S_{z_0}$ is never parallel to the $t$-axis in this neighborhood, meaning we can write $S_{z_0}$ as a graph $(z_0,t(r),r)$ in this neighborhood (the function $t(r)=t_{z_0}(r)$ also depends on $z_0$ but we omit the notation). 

		Now we can concatenate the graph $t(r)$ with the linear map $\bar{t}(r)=\tan(\theta) r$, at $r=\delta''$ for some $\delta''<\delta'$, to get a new function $\hat{t}(r)$ with jump singularity at $r=\delta''$, and using a bump function $\phi(r)$ supported near the singularity, we have the function $\hat{t}(r)(1-\phi(r))$  gives the graph bounding our desired E together with $E_c$. Given a fixed $\theta \in (0,\frac{\pi}{2})$, we can choose $\delta',\delta''$ small enough so that the modification happens within the $\frac{\delta}{2}-$neighborhood of $E_c$.
	\end{proof}

	From now on, in this section we will mostly follow the discussion in \cite{ChodoLiStryk2022-CompleteStableMinimal} where the case is for manifolds without boundary.

	\begin{definition}[Parabolic Component] 
		Let $(M^n,\p M)$ be a complete Riemannian manifold with noncompact boundary, and $E$ an end with respect to some compact $K$.
		We say that $E$ is parabolic if there is no  positive harmonic function $f \in C^{2,\alpha}(E)$, for some $\alpha>0$, so that, 
		\begin{equation*}
			f\big|_{\partial_1 E}=1,\quad \partial_{\nu}f\big|_{\partial_0 E}=0, \quad f\big|_{E^{\circ}}<1,
		\end{equation*}
		with $\nu$ the outward pointing unit normal of $\p M$.

		Otherwise we say that $E$ is nonparabolic.
	\end{definition}

	We note that if $E$ is nonparabolic, then there is a harmonic function $f$ on $E$ that is $C^{2,\alpha}$ across the corners, in the sense that it can be extended to an open neighborhood of $E$ in $M$. 

	We first deal with the regularity issue arising in the above definition. That is, when $\p_1 E \cap \p_0 E \neq \emptyset$, a weakly harmonic function over $E$ may not lie in the class $C^2(E)$ or even $C^1(E)$. The following theorem says that if we purturb the angle of intersection of $\p_1 E \cap \p_0 E$ to be small, we will have enough regularity.

	\begin{theorem} \label{regularity}
		Consider a connected compact Riemannian manifold with boundary $(K,\partial K =\partial_1 K \cup\partial_0 K)$, and $\partial_1 K$ intersect with $\partial_0 K$ transversally as smooth codimension 1 submanifolds, with constant angle $\theta \in (0,\pi/4)$ contained in $K$. We write $\nu$ as the outward pointing unit normal at each boundary ($\nu$ exists almost everywhere, i.e. except at the corner points). Then a weakly harmonic function $f \in W^{1,2}(K)$ with prescribed boundary condition: $f\vert_{\partial_1 K}=g\vert_{\partial_1 K}$, and $\nabla_{\nu}f\vert_{\partial_0 K}=\nabla_{\nu}g\vert_{\partial_0 K}$ with $ g\in C^{2,\alpha(\theta)}(K)$, is also  $C^{2,\alpha(\theta)}$ for some fixed $\alpha(\theta)>0.$
	\end{theorem}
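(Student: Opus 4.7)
My plan is to reduce to a local problem near each corner point on $\Gamma := \partial_1 K \cap \partial_0 K$ and invoke the Azzam--Kreyszig corner regularity theorem, handling regularity away from $\Gamma$ by standard Schauder theory. First, I would set $u := f - g \in W^{1,2}(K)$, so that $u$ is a weak solution of
\begin{equation*}
\Delta u = -\Delta g, \qquad u\big|_{\partial_1 K}=0, \qquad \nabla_\nu u\big|_{\partial_0 K}=0,
\end{equation*}
with right-hand side and boundary data of class $C^{0,\alpha(\theta)}$. This homogenizes the boundary conditions and reduces the problem to $C^{2,\alpha(\theta)}$ regularity for $u$. Away from $\Gamma$, the usual interior, Dirichlet-boundary, and Neumann-boundary Schauder estimates immediately give $u \in C^{2,\alpha(\theta)}_{\mathrm{loc}}(K \setminus \Gamma)$, so only a neighborhood of each $x_0 \in \Gamma$ requires new input.

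Next, near a fixed $x_0 \in \Gamma$ I would straighten both boundary faces simultaneously: because the two smooth hypersurfaces $\partial_0 K$ and $\partial_1 K$ meet at the constant angle $\theta$ along a smooth submanifold, one can construct a $C^{2,\alpha}$ coordinate chart centered at $x_0$ sending them to two hyperplanes of $\mathbb{R}^n$ meeting at angle $\theta$, i.e.\ a model wedge $W_\theta$. Pulled back into these coordinates $u$ satisfies a uniformly elliptic, divergence-form equation with Hölder-continuous coefficients on $W_\theta$, with a Dirichlet condition on one face and a conormal (oblique Neumann) condition on the other. At this point the Miranda estimate \cite{Miran1955-SulProblemaMisto} cited in the introduction yields continuity and $L^\infty$ bounds of $u$ up to the edge of $W_\theta$, which provides the baseline on which to bootstrap.

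The decisive step is then to apply Azzam--Kreyszig \cite{AzzamKreys1982-SolutionsEllipticEquations}: the solution of a mixed Dirichlet--Neumann boundary value problem on a wedge of opening angle $\theta$ lies in $C^{k,\alpha}$ as soon as the critical singular exponent $\pi/(2\theta)$ strictly exceeds $k+\alpha$. Under our hypothesis $\theta \in (0,\pi/4)$ one has $\pi/(2\theta) > 2$, so there exists an explicit $\alpha(\theta) \in (0,1)$ with $2 + \alpha(\theta) < \pi/(2\theta)$, yielding $u \in C^{2,\alpha(\theta)}$ in a neighborhood of $x_0$. Covering $\Gamma$ by finitely many such coordinate patches and combining with the Schauder bounds on $K \setminus \Gamma$ produces the claimed global regularity of $u$, and hence of $f = u + g$.

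The step I expect to be most delicate is verifying that the Azzam--Kreyszig framework, typically formulated for model wedges with constant-coefficient Laplacian, continues to govern the regularity after the straightening: the pulled-back metric has only Hölder-continuous coefficients, and the Neumann condition generally becomes an oblique conormal condition rather than a pure normal derivative. A standard freezing-of-coefficients argument at $x_0$, combined with a perturbative absorption of the lower-order error into the inhomogeneity, should reduce matters to the flat model; one has to check, however, that the singular exponent controlling the corner asymptotics is still $\pi/(2\theta)$ and is not shifted by the obliqueness, so that the regularity threshold $\theta < \pi/4$ really suffices.
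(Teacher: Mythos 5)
Your proposal follows essentially the same route as the paper: subtract $g$ to homogenize the mixed boundary conditions, use standard elliptic regularity away from the corner set, and at the corners invoke Miranda for continuity followed by Azzam(--Kreyszig) to get $C^{2,\alpha(\theta)}$ under the hypothesis $\theta\in(0,\pi/4)$. The only cosmetic differences are that you use Schauder theory where the paper runs an $L^2$/$H^k$ iteration away from the corners, and you flag explicitly the variable-coefficient/obliqueness issue after straightening, which the paper handles implicitly by working in normal coordinates.
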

	\begin{proof}
		The function $u=g-f$ satisfies $\Delta u= \Delta g=:h$ and has Dirichlet boundary condition over $\partial_1 K$ and Neumann boundary condition over $\partial_0 K$. Then $u$ is the unique solution to the following problem, in a  complete subspace of $W^{1,2}(K)$, namely
		$$\int_K \nabla u \cdot \nabla \phi= - \int_K h\phi, \quad \forall \phi \in C^{\infty}_c(K\setminus \p_1 K),$$
		\text{over the set }$\mathcal{S}:=\left\{ u \in W^{1,2}(K), u\vert_{\p_1 K}=0\right\}.$

		We note that a unique solution exists by Lax-Milgram, and we have that the $W^{1,2}$ norm of the solution $u$ is finite since,
		\begin{equation*}
			\int_K \nabla u \cdot \nabla u =-\int_K hu \leq \|h\|_{L^2} \|u\|_{L^2} \leq C \|h\|_{L^2} \|\nabla u\|_{L^2},
		\end{equation*}
		where in the last step we used Poincar\'e inequality since $u \vert_{\partial_1 K}=0$ ($\partial_1 K \neq \emptyset$).
		So away from the corners we can continue with standard iteration scheme (see for example \cite{Evans2010-PartialDifferentialEquations}, \cite{AmbroCarloMassa2018-LecturesEllipticPartial} and \cite{GilbaTrudi2001-EllipticPartialDifferential}) to get for any $k\in \N$, $\|u\|_{H^k} \leq C' \|u\|_{H^1} \leq C(h,K)$, where $\|u\|_{H^k}:=\|\nabla^k u\|_{L^2(K)}$. We briefly write the process using partition of unity here.

		Given any interior ball $B_r\subset B_R \subset K^{\circ}$ consider a bump function supported on $B_R$  and $\phi=1$ on $B_r$. Then $\Delta (\phi u)=(\Delta \phi) u + 2\nabla u \cdot \nabla \phi + h\phi \in L^2$, so we have that $\|\phi u\|_{H^2} \leq C' (\|\Delta (\phi u)\|_{L^2}+\|u\|_{H^1})\leq C(R,r,h) \|u\|_{H^1}$. Differentiating the equation again and iterate the process, we get the claimed bounds on $H^k$ norm of $u$ on $B_r$. So we can get $C^{\infty}_{loc}$ bounds on any compact set in the interior.

		A similar process holds if $B_r\subset B_R$ are balls centered around a boundary point $B_R \cap \partial_0 K = \emptyset$. Consider $\phi f$ with $\phi$ compactly supported in $B_R$ but is equal to 1 on $B_r$ (including points on the boundary), look at $\phi u$ on $B_R$ (and flatten the intersetion of $B_R$ and $\partial_1 K$, this is not an issue since we only want to bound $u$ in $B_r$). Then the same process as above applies using boundary estimates. 
		
		For purely Neumann condition a similar treatment holds. We need to choose bump functions $\phi$ supported in boundary coordinates charts, so that on the boundary of $B_R$, $\phi=1, \partial_{\nu}\phi=0$, to make sure $\partial_{\nu} (\phi f)=0$ on the boundary of $B_R$ (again we flatten the intersection of $B_R$  and $\partial_0 K$). Then using boundary estimates for Neumann conditions, we again have the above property.

		If $B_R$ is a ball centered around a point on the corners: $\partial_1 
		K \cap \partial_0 K$, we have $\Delta  u=h$ on $B_R$, using normal coordinates for small $r$, the function $u$ solves a uniformly elliptic nonhomogeneous equation, both in the weak sense and classically everywhere except at the corners. We choose a smooth bump function $\phi$ like in the Neumann case, i.e. $\phi=1$ and $\p_{\nu} \phi=0$ on the boundary of $B_R$. Then by the work of Miranda \cite{Miran1955-SulProblemaMisto},  $u\phi$ is (H\"older) continuous (and bounded)  on $B_R$, and under this assumption, using the method of barrier functions, Azzam  \cite{azzam1981smoothness} gives that $u \in C^{2,\alpha(\theta)}(B_r)$ for $\theta \in (0,\pi/4)$. 
%		Writing $\delta(x)=d(x,\partial_1 M \cap \partial_0 M)$, 
		The following bounds holds on $B_r$ for $r <\frac{R}{2}$ (\cite{azzam1981smoothness}):
		\begin{equation} \label{regbound}
			\sup_{x\in B_r} |u(x)|+\sup_{x,y\in B_r}\frac{|D^2u(x)-D^2 u(y)|}{d_K(x,y)^{\alpha}}\leq C
%			\delta(x)|Du(x)| + \delta^2(x)|D^2 u(x)| \leq C(\delta(x))^{2+\alpha(\theta)},
		\end{equation}
		where the constant only depend on the manifold $K$, the function $g$ and the constant $\alpha$.
		In particular, on any compact set in $K$, $u$ has bounded $C^{2,\alpha}$ norm and so does $f$, i.e. $\|f\|_{C^{2,\alpha}}(K)\leq C(g,K)$. We will make use of the bound soon.
	\end{proof}

	\begin{remark}
		The book of Miranda \cite{Miran1970-PartialDifferentialEquations}, the paper of Liebermann \cite{Liebe1986-MixedBoundaryValue} and of Azzam and Kreyszig \cite{AzzamKreys1982-SolutionsEllipticEquations} give a nice review  over regularity of solutions of mixed boundary value problem. 
	\end{remark}

	In this paper, when we say that an end is parabolic or non-parabolic, we always mean that $\p_1 E\cap \p_0 E$ with a constant angle in $(0,\pi/4)$. Applying Hopf Lemma (see \cite{GilbaTrudi2001-EllipticPartialDifferential} Lemma 3.4) we have the following maximum principle.

	\begin{theorem}\label{MaxiP}
		If $K$ is compact in $M$, and $f$ is harmonic on $K$ with $\p_{\nu} f \vert_{\p_0 K}=0$, then
		\begin{equation*}
			\max_{\p_0 K} f \leq \max_{\p_1 K}f, \quad \min_{\p_0 K} f \geq \min_{\p_1 K} f.
		\end{equation*}
		In particular, $\max_{K} f=\max_{\p_1 K} f$ and $\min_{K} f=\min_{\p_1 K} f$.
	\end{theorem}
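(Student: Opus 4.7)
The plan is the standard combination of the strong maximum principle for harmonic functions in the interior with Hopf's boundary-point lemma on the Neumann portion $\p_0 K$, using the regularity supplied by Theorem \ref{regularity} to justify pointwise application of these tools. The corners $\p_0 K \cap \p_1 K$ require no separate treatment, since they already lie in $\p_1 K$ and hence only contribute to the favorable side of the inequality.

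First, by Theorem \ref{regularity} the weakly harmonic $f$ belongs to $C^{2,\alpha(\theta)}(K)$, so $f$ is continuous on the compact set $K$ and attains $M := \max_K f$ at some $x_0 \in K$, and the Neumann condition $\p_\nu f = 0$ is realized classically on the smooth part of $\p_0 K$. If $x_0 \in K^\circ$, the strong maximum principle forces $f \equiv M$ on the connected component of $x_0$; continuity then propagates $f \equiv M$ to all of $K$, and both inequalities hold trivially. If $x_0 \in \p_0 K \setminus \p_1 K$, then $\p_0 K$ is a smooth hypersurface in a neighborhood of $x_0$, so one can choose an open ball $B \subset K^\circ$ with $\overline{B} \cap \p K = \{x_0\}$; the Hopf lemma then gives $\p_\nu f(x_0) > 0$ unless $f$ is locally constant, which contradicts the hypothesis $\p_\nu f|_{\p_0 K} = 0$ (and the locally constant case reduces to the preceding one by unique continuation). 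The only remaining possibility is $x_0 \in \p_1 K$, which is exactly the desired conclusion $\max_K f = \max_{\p_1 K} f$, and in particular $\max_{\p_0 K} f \leq \max_{\p_1 K} f$. The symmetric lower bound follows by applying the same reasoning to $-f$, which is again harmonic with vanishing Neumann data on $\p_0 K$.

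The one point needing care is the applicability of Hopf's lemma at $x_0 \in \p_0 K \setminus \p_1 K$: one needs both the interior-sphere condition and a classical pointwise one-sided normal derivative. Both ingredients are furnished by the smoothness of $\p_0 K$ away from the corner set together with the $C^{2,\alpha}$ regularity from Theorem \ref{regularity}; the corner points, where the regularity would be the weakest, never need to be analyzed here because they belong to $\p_1 K$ by definition.
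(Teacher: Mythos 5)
Your proof is correct and is the same argument the paper has in mind: the paper simply cites Hopf's lemma (Gilbarg--Trudinger, Lemma 3.4) and states the result, whereas you spell out the standard three-way case analysis (interior point, smooth Neumann-boundary point, Dirichlet-boundary point) together with the observations that the regularity from Theorem~\ref{regularity} is what legitimizes the pointwise use of Hopf and that the corner points cause no trouble because they already lie in $\p_1 K$. Your handling of the degenerate locally-constant case via unique continuation is the right patch for the one place Hopf's lemma alone would not immediately close the argument.
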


	\begin{lemma}\label{parabolic0energy}
		Let $(M,\partial M)$ be a complete Riemannian manifold. Let $K\subset M$ be a compact subsest of $M.$ Let $E\subset M$ be an unbounded component of $M\setminus K$, fix $p\in E$ and consider $B_{R_i}(p)$. Assume $E$ is parabolic, then there are positive harmonic functions $f_i$ on $E\cap B_{R_i}$ with 
		$$f_i\rvert_{\partial_1 E}=1, \nabla_{\nu}f_i\rvert_{\partial_0 E}=0,  f_i \rvert_{\partial_1 B_{R_i}}=0,$$
		with $R_i \rightarrow \infty$. Then $f_i \rightarrow 1$ in $C^{2,\alpha}_{\rm{loc}}(E)$ and $\lim_i \int_E |\nabla f_i|^2=0.$
	\end{lemma}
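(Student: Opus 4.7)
The plan is to construct each $f_i$ as the unique solution of a mixed Dirichlet--Neumann problem on $\Omega_i := E \cap B_{R_i}(p)$, show the sequence is monotone and extract a $C^{2,\alpha}_{\loc}$ limit $f_\infty$, invoke the parabolicity hypothesis to force $f_\infty \equiv 1$, and finally read off the energy decay from integration by parts. To set up, I would choose radii $R_i \to \infty$ and (after applying Theorem~\ref{IntersectionAngle} to perturb the mollified distance spheres slightly) arrange each outer boundary $\p_1 B_{R_i}\cap E$ to be a smooth hypersurface meeting $\p_0 E$ at a constant angle in $(0,\pi/4)$. On $\Omega_i$, existence of $f_i$ as the weak solution of
\begin{align*}
\Delta f_i = 0 \text{ on } \Omega_i, \quad f_i\big|_{\p_1 E} = 1, \quad f_i\big|_{\p_1 B_{R_i}\cap E} = 0, \quad \p_\nu f_i\big|_{\p_0 E\cap B_{R_i}}=0,
\end{align*}
follows from Lax--Milgram exactly as in the proof of Theorem~\ref{regularity}, and the same theorem supplies $C^{2,\alpha}$ regularity through the corners thanks to the angle condition. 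The strong maximum principle combined with Theorem~\ref{MaxiP} then forces $0 < f_i < 1$ on $\Omega_i^\circ$.

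Monotonicity follows by comparing $f_{i+1}$ and $f_i$ on $\Omega_i$: their difference is harmonic, vanishes on $\p_1 E$, satisfies the Neumann condition on $\p_0 E\cap B_{R_i}$, and equals $f_{i+1}\ge 0$ on $\p_1 B_{R_i}\cap E$, so Theorem~\ref{MaxiP} yields $f_{i+1} \ge f_i$. Thus $f_i \nearrow f_\infty$ pointwise with $0 < f_1 \le f_\infty \le 1$. The uniform $L^\infty$ bound combined with the estimate \eqref{regbound} gives uniform $C^{2,\alpha}$ bounds on every compact subset of $E$, promoting the convergence to $C^{2,\alpha}_{\loc}(E)$. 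The limit $f_\infty$ is harmonic on $E$ and realizes the boundary conditions in the definition of parabolicity; if $f_\infty < 1$ at even one point of $E^\circ$, the strong maximum principle would force $f_\infty < 1$ throughout $E^\circ$, contradicting parabolicity. Hence $f_\infty \equiv 1$.

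For the energy decay, integration by parts on $\Omega_i$ gives
\begin{align*}
\int_{\Omega_i} |\nabla f_i|^2 = \int_{\p \Omega_i} f_i\,\p_\nu f_i - \int_{\Omega_i} f_i\,\Delta f_i = \int_{\p_1 E} \p_\nu f_i,
\end{align*}
because $\Delta f_i = 0$, the Neumann condition kills the $\p_0 E$ contribution, and $f_i=0$ kills the $\p_1 B_{R_i}$ contribution (the codimension-two corner set has measure zero on $\p \Omega_i$). Since $\p_1 E$ is compact and $f_i \to 1$ in $C^{2,\alpha}$ on a fixed neighborhood of it, $\p_\nu f_i \to 0$ uniformly on $\p_1 E$, so the right-hand side vanishes in the limit.

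The main obstacle I anticipate is obtaining the uniform $C^{2,\alpha}$ estimates up to the two families of corners — the fixed set $\p_0 E \cap \p_1 E$ and the moving family $\p_0 E \cap \p_1 B_{R_i}$. This is exactly what the careful choice of $R_i$ via Theorem~\ref{IntersectionAngle} and the corner regularity bound \eqref{regbound} are designed to handle; away from those corners, standard interior and boundary Schauder estimates suffice.
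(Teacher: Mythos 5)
Your proof is correct and follows essentially the same route as the paper: construct the $f_i$ as solutions of the mixed Dirichlet--Neumann problem, invoke Theorem~\ref{regularity} and the bound \eqref{regbound} for uniform $C^{2,\alpha}_{\loc}$ control, use parabolicity to force the limit to be $\equiv 1$, and obtain the energy decay by integration by parts. The one small deviation is that you establish pointwise monotonicity $f_{i+1}\ge f_i$ via Theorem~\ref{MaxiP} to upgrade subsequential to full-sequence convergence, whereas the paper instead notes the Dirichlet energies are decreasing and extracts a subsequential limit; both are minor bookkeeping devices inside the same argument.
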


	\begin{remark}
		Again we may choose $R_i$ and mollify the boundary $\partial B_{R_i} \cap \partial M$ without relabeling so that the angle of intersection is $\theta \in (0,\pi/4)$. We will omit this step later when mollification is needed.
	\end{remark}

	\begin{proof}
		Let $f_i$ be the minimizer of Dirichlet energy over $B_{R_i}$ given the above boundary conditions.
		We first claim that $f_i$ has finite and decreasing Dirichlet energy. Since given a Lipschitz domain in $\R^n$,  a function is in $W_0^{1,2}$ (zero trace) if and only if it can be approximated by a sequence of compactly supported smooth functions, and $E$ has Lipschitz boundary, using a partition of unity, the same  holds for on $E$. So if we extend $f_1$ by zero on $B_{R_i}\setminus B_{R_1}$ we get another candidate and that we may assume $\int_E |\nabla f_{i+1}|^2 \leq \int_E |\nabla f_i|^2\leq  \int_E |\nabla f_1|^2=C_1.$

		Using Lemma \ref{regularity} and maximum principle, we know that $\|f_i\|_{C^0}\leq 1$, for all $i\geq 0$. Now using equation (\ref{regbound}), we know that $ \sup_i  \|f_i\|_{C^{2,\alpha}(K')}$ is finite for any compact subset $K' \subset E$. 
		
		We also have that $f_i$ subsequentially converge in $C^{2,\alpha}_{loc}$ (for some $\alpha >0$) to a harmonic function $1\leq f \leq 0$ on $E$, and by parabolicity and maximum principle, $f=1$ everywhere on $E$, and we have:
		\begin{equation*}
			\int_E |\nabla f_i|^2 = \int_{\partial_1 E} f_i \nabla_{\nu} f_i \rightarrow 0,
		\end{equation*}
		using the uniform convergence to $f=1$ in $C^1_{\loc}-$norm near $\p_1 E$.
	\end{proof}

	We note that nonparabolicity is inherited by subsets. The proof of the lemma below is analogous to Proposition 3.5 in \cite{ChodoLiStryk2022-CompleteStableMinimal}  if we use Lemma \ref{regularity} to deal with regularity of mixed boundary value problem.
	\begin{lemma}
		Consider $K\subset \hat{K}$ compact subset in $(M,\p M)$, with each component of $M \setminus \hat{K}$ and $M \setminus K $ is smooth except at the corners. If $E$ is a nonparabolic component of $M \setminus K$, then there is a nonparabolic component of $M \setminus \hat{K}$.
	\end{lemma}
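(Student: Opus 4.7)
The plan is to argue by contradiction, following Proposition 3.5 in \cite{ChodoLiStryk2022-CompleteStableMinimal} but equipped with Lemma \ref{regularity} and Lemma \ref{parabolic0energy} to manage the mixed boundary value problem. Suppose every component of $M \setminus \hat{K}$ contained in $E$ is parabolic. Since $\hat K$ is compact, $\partial \hat K$ is a compact manifold with boundary, so only finitely many components of $M \setminus \hat K$ meet $E$; denote those lying in $E$ by $E_1, \dots, E_m$. After mollifying the relevant interfaces as in Theorem \ref{IntersectionAngle} so that every corner along $\partial \hat K \cap \partial M$ meets at a fixed angle in $(0, \pi/4)$, each $E_j$ is an end of $M$ whose boundary satisfies the regularity hypotheses of Lemma \ref{parabolic0energy}. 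Fix $p \in E \cap \hat K$ and pick an exhaustion $R_i \to \infty$ with $\hat K \subset B_{R_i}(p)$ for all $i$.

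Apply Lemma \ref{parabolic0energy} on each parabolic $E_j$ to obtain harmonic functions $f_i^{(j)}$ on $E_j \cap B_{R_i}$ with $f_i^{(j)}|_{\partial_1 E_j} = 1$, $\partial_\nu f_i^{(j)}|_{\partial_0 E_j} = 0$, $f_i^{(j)}|_{E_j \cap \partial B_{R_i}} = 0$, and $\int_{E_j} |\nabla f_i^{(j)}|^2 \to 0$. Define a glued comparison function on $E \cap B_{R_i}$ by
\[
\phi_i := 1 \text{ on } E \cap \hat K, \qquad \phi_i := f_i^{(j)} \text{ on } E_j \cap B_{R_i}.
\]
Because $\partial_1 E_j \subset \partial \hat K$ and $f_i^{(j)} \equiv 1$ there, $\phi_i$ agrees across every interface and lies in $W^{1,2}(E \cap B_{R_i})$; its trace equals $1$ on $\partial_1 E \subset \hat K$ and $0$ on $E \cap \partial B_{R_i}$ (using $\hat K \subset B_{R_i}$), and the Neumann condition along $\partial_0 E$ holds piecewise. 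Hence $\phi_i$ is an admissible competitor in the variational problem whose minimizer $F_i$ is furnished by Lemma \ref{parabolic0energy}, and energy minimization yields
\[
\int_E |\nabla F_i|^2 \;\leq\; \int_E |\nabla \phi_i|^2 \;=\; \sum_{j=1}^m \int_{E_j \cap B_{R_i}} |\nabla f_i^{(j)}|^2 \;\longrightarrow\; 0.
\]

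Lemma \ref{parabolic0energy} combined with Theorem \ref{MaxiP} then forces $E$ to be parabolic: if $E$ were nonparabolic and $h$ witnessed this (positive, harmonic, $h|_{\partial_1 E} = 1$, Neumann on $\partial_0 E$, $h < 1$ somewhere), then the mixed boundary maximum principle would give $F_i \leq h$ on $E \cap B_{R_i}$, contradicting the $C^{2,\alpha}_{\mathrm{loc}}$ convergence $F_i \to 1$ forced by the vanishing energy above. This contradicts the nonparabolicity of $E$ and completes the proof. The main obstacle is verifying that $\phi_i$ is a legitimate $W^{1,2}$ competitor across the codimension-one interfaces $\partial_1 E_j$: this depends on the matching trace $f_i^{(j)}|_{\partial_1 E_j} = 1$ together with the corner regularity from Lemma \ref{regularity}, which in turn is why we must first mollify all interfaces to have intersection angles in $(0, \pi/4)$ and why the Lax--Milgram setup from the proof of Lemma \ref{parabolic0energy} is crucial for admitting the glued competitor.
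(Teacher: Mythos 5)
Your argument is correct and is essentially the proof the paper intends: the paper defers to Proposition 3.5 of Chodosh--Li--Stryker, which is exactly this glued-competitor/capacity-subadditivity argument, and you have correctly upgraded it with the corner regularity of Lemma \ref{regularity} and the maximum principle of Theorem \ref{MaxiP} to handle the mixed boundary conditions. The only detail glossed over is that bounded components of $E \setminus \hat{K}$ (to which Lemma \ref{parabolic0energy} does not apply, being stated for unbounded components) should simply receive $\phi_i \equiv 1$, which costs no energy and does not affect the conclusion.
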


	The above lemma, together with {Theorem \ref{IntersectionAngle}}  says that, starting with any nonparabolic end $E_1:=E \subset M\setminus K$, we can build a sequence of nonparabolic sets $E_k$ with $\partial_1 E_k\cap \partial M$ contained correspondingly in any small neighborhood of $\partial B_{R_k}(p),$  intersecting with $\partial M$ at angle $\theta$ for any $\theta \in (0,\pi/4),$ for any $R_k$ in a open dense set of $(0,\infty).$ Hence we have the following definition.
	
	\begin{definition}[Nonparabolic Ends]
		Let $(E_k)$ be an end with each $\partial E_k$ intersecting with $\partial M$ at angle $\theta \in (0,\pi/4)$ and smooth except at the corners, we say that $(E_k)$ is a nonparabolic end if $k\geq 0$, the component $E_k$ is nonparabolic.
	\end{definition}

	We also note that the unique minimal barrier function on a nonparabolic end has finite Dirichelt energy, a fact we will use in Section \ref{1NonparaEnd}.

	\begin{theorem}\label{finiteDE}
		If $E$ is a nonparabolic end of $M$, then there is a harmonic function $f$ over $E$ with $f \vert_{\partial_1 E}=1$ and $\nabla_{\nu} f\vert_{\partial_0 E}=0$, that is minimal among all such harmonic functions and has finite Dirichlet energy.
	\end{theorem}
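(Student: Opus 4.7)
The plan is to construct $f$ as a monotone limit of solutions of mixed boundary value problems on an exhaustion of $E$. Fix $p \in E$ and, via Theorem \ref{IntersectionAngle}, choose radii $R_i \to \infty$ so that each $\tilde E_i := E \cap B_{R_i}(p)$ has corners only where $\p B_{R_i}$ meets $\p_0 E$, with constant angle in $(0,\pi/4)$. Writing $\p_0 \tilde E_i := \p \tilde E_i \cap \p M$, solve on $\tilde E_i$ the mixed BVP
\[
\Delta f_i = 0, \qquad f_i\big|_{\p_1 E} = 1, \qquad f_i\big|_{\p B_{R_i}\cap E^\circ} = 0, \qquad \p_\nu f_i\big|_{\p_0 \tilde E_i} = 0,
\]
obtained as the Dirichlet energy minimizer subject to the two Dirichlet traces (the Neumann condition arising as the natural boundary condition). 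Existence follows by Lax--Milgram as in Lemma \ref{parabolic0energy}, Theorem \ref{regularity} upgrades $f_i$ to $C^{2,\alpha}$, and Theorem \ref{MaxiP} gives $0 \le f_i \le 1$.

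Next I would establish monotonicity and extract a limit. On $\tilde E_i$ the difference $f_{i+1} - f_i$ is harmonic, vanishes on $\p_1 E$, is Neumann on $\p_0 \tilde E_i$, and equals $f_{i+1} \ge 0$ on $\p B_{R_i} \cap E^\circ$; Theorem \ref{MaxiP} therefore yields $f_i \le f_{i+1}$. With $f_i \le 1$ the pointwise limit $f := \lim_i f_i$ exists, and estimate \eqref{regbound} makes $(f_i)$ locally bounded in $C^{2,\alpha}$, so convergence is in $C^{2,\alpha}_{\loc}(E)$ and $f$ is a $C^{2,\alpha}$ harmonic function satisfying $f|_{\p_1 E} = 1$ and $\p_\nu f|_{\p_0 E} = 0$. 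To show $f \not\equiv 1$ (needed both for nontriviality and for comparison with the constant solution), let $u$ be the positive harmonic witness of nonparabolicity: $u|_{\p_1 E} = 1$, $\p_\nu u|_{\p_0 E} = 0$, $u < 1$ somewhere in $E^\circ$. Then $u - f_i$ is harmonic on $\tilde E_i$, vanishes on $\p_1 E$, is Neumann on $\p_0 \tilde E_i$, and equals $u \ge 0$ on $\p B_{R_i} \cap E^\circ$, so Theorem \ref{MaxiP} gives $f_i \le u$ and hence $f \le u < 1$ at some point.

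For the Dirichlet energy bound I would exhibit a uniform competitor. Pick a smooth cutoff $\chi$ on $M$ with $\chi = 1$ on $\p_1 E$ and compact support; for $i$ large $\chi$ vanishes on $\p B_{R_i} \cap E^\circ$ and is therefore admissible in the variational characterization of $f_i$, giving
\[
\int_{\tilde E_i} |\nabla f_i|^2 \le \int_{\tilde E_i} |\nabla \chi|^2 \le \int_M |\nabla \chi|^2 =: C_0 < \infty.
\]
The $C^{2,\alpha}_{\loc}$ convergence then yields $\int_E |\nabla f|^2 \le C_0$. For the minimality claim, if $g$ is any positive harmonic function on $E$ with $g|_{\p_1 E} = 1$ and $\p_\nu g|_{\p_0 E} = 0$, then on $\tilde E_i$ the difference $g - f_i$ is harmonic, vanishes on $\p_1 E$, is Neumann on $\p_0 \tilde E_i$, and equals $g \ge 0$ on $\p B_{R_i} \cap E^\circ$; Theorem \ref{MaxiP} gives $g \ge f_i$, and passing to the limit $g \ge f$.

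I expect the main technical hurdle to be justifying each application of Theorem \ref{MaxiP} on domains carrying mixed Dirichlet/Neumann data with corners. This is precisely what forces the angle constraint $\theta \in (0,\pi/4)$ provided by Theorem \ref{IntersectionAngle} and the up-to-corner $C^{2,\alpha}$ regularity of Theorem \ref{regularity}; without these, the Hopf-lemma inputs to Theorem \ref{MaxiP} would fail at the corner set and all four comparisons above (monotonicity, comparison with $u$, upper bound by $\chi$, and minimality) would be on shaky ground.
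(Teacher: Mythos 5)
Your proof is correct and follows essentially the same route as the paper: solve the mixed Dirichlet--Neumann problem on an exhaustion of $E$ with the $(0,\pi/4)$ corner angle arranged via Theorem \ref{IntersectionAngle}, get uniform $C^{2,\alpha}$ bounds from Theorem \ref{regularity}, pass to a $C^{2,\alpha}_{\loc}$ limit, and use the maximum principle (Theorem \ref{MaxiP}) to compare with any competing barrier $g$. Two small points of divergence are worth noting. First, you make the monotonicity $f_i \le f_{i+1}$ explicit (whereas the paper only invokes the subsequential-convergence argument of Lemma \ref{parabolic0energy}); this is a slight tidying-up that gives you a direct limit rather than a subsequential one, though it does not change anything essential. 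Second, your bound on the Dirichlet energy uses a fixed cutoff competitor $\chi$ in the variational characterization, whereas the paper integrates by parts to write $\int_{\Omega_i}|\nabla f_i|^2 = \int_{\p_1 E} f_i\,\nabla_\nu f_i$ and then invokes the uniform $C^{2,\alpha}$ estimate (\ref{regbound}) on a compact neighborhood of $\p_1 E$. Your competitor argument is a bit more robust, since it sidesteps any dependence on boundary regularity near $\p_1 E$; the paper's Green's-identity computation is slightly more precise but needs the regularity estimate already in hand. Either way the uniform bound passes to the limit by $C^{2,\alpha}_{\loc}$ convergence on compact subsets, as you observe. The only thing I would spell out a little more is the final passage $\int_E |\nabla f|^2 \le C_0$: one should exhaust $E$ by compact sets, use locally uniform convergence of $|\nabla f_i|^2$ to get the bound on each compactum, and then take a supremum -- a routine step, but worth stating given that the statement being proved is precisely about finiteness of a global integral.
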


	\begin{proof}
		By definition of nonparabolicity, there is a positive harmonic function $g$ with $g\vert_{\partial_1 E}=1, \partial_{\nu} g\vert_{\partial_0 E} =0$. We solve over an exhaustion $\cup_{i\in \N}\Omega_i = E$, the following mixed boundary value problem (each $\Omega_i$ contains $\partial_1 E$),
		\begin{align*}
			\Delta f_i=0, \quad f_i\vert_{\partial_1 E}=1, \quad \partial_{\nu} f_i\vert_{\partial_0 E}=0, \quad f_i\vert_{\partial \Omega_i \setminus (\partial_1E \cup \partial_0 E)}=0.
		\end{align*}
		We may assume all the corners of $\Omega_i$ has interior angle in $(0,\pi/4)$. Maximum principle then gives that $f_i \leq g$ over $\Omega_i$.
		Using the same argument as in Lemma \ref{parabolic0energy}, we have that $f_i$ converge in $C^{2,\alpha}_{\loc}$ to a positive barrier function over $E$, that is bounded by $g$. Since this argument applies for arbitrary $g$, we have that $f$ is the unique minimal barrier function. Now we show $f$ has finite Dirichlet energy.
		\begin{align*}
			\int_{\Omega_i} |\nabla f_i|^2 = \int_{\partial_1 E} f_i\nabla_{\nu} f_i \leq C_0,
		\end{align*}
		where the last inequality is bounded by a constant we again used equation (\ref{regbound}) near a compact set containing $\p_1 E$. Now we can let $i\rightarrow \infty$ in the equation below to get that $f$ has finite Dirichlet energy.
		\begin{align*}
			\int_{\Omega_i} |\nabla f|^2 =\lim_{l>i} \int_{\Omega_i} |\nabla f_l|^2 \leq C_0.
		\end{align*}

	\end{proof}

	\section{At Most One Nonparabolic End}\label{1NonparaEnd}

	We follow the same method in \cite{ChodoLiStryk2022-CompleteStableMinimal} to show that under a suitable condition ($A_2 \geq 0$) for the boundary $\p X$ of an ambient manifold $X$ with $\Ric_2 \geq 0$, any  free boundary stable minimal hypersurface with infinite volume can only have at most 1 nonparabolic end. We begin with the following theorem.

	\begin{theorem}\label{HarFinEne}
		Consider $(M,\partial M)$ a complete manifold, $K \subset M$ compact and $E_1, E_2$ are two nonparabolic components of $M\setminus K$. Then there is a  nonconstant bounded harmonic function with finite Dirichlet energy on $M.$
	\end{theorem}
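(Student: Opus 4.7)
My plan is to adapt the Li--Tam construction to the mixed Dirichlet--Neumann setting. After slightly enlarging $K$ by means of Theorem \ref{IntersectionAngle} and the inheritance property of nonparabolicity stated just before the definition of nonparabolic ends, I may assume $\partial K$ meets $\partial M$ transversally at a constant angle in $(0,\pi/4)$ while retaining two nonparabolic components $E_1,E_2$ of $M\setminus K$. I would then fix an exhaustion $K\subset\Omega_1\subset\Omega_2\subset\cdots$ of $M$ with each $\partial_1\Omega_i$ smooth away from corners and meeting $\partial M$ at constant angle in $(0,\pi/4)$, and solve the mixed boundary value problem
\begin{align*}
\Delta w_i=0 \text{ on } \Omega_i,\quad w_i=1 \text{ on } \partial_1\Omega_i\cap E_1,\quad w_i=0 \text{ on } \partial_1\Omega_i\setminus E_1,\quad \partial_\nu w_i=0 \text{ on } \partial_0\Omega_i.
\end{align*}
Theorem \ref{regularity} produces $C^{2,\alpha}$-solutions with the uniform corner bound (\ref{regbound}), and Theorem \ref{MaxiP} gives $0\leq w_i\leq 1$. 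A diagonal subsequence then converges in $C^{2,\alpha}_{\loc}(M)$ to a bounded harmonic function $w$ on $M$ with $\partial_\nu w=0$ along $\partial M$.

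I expect the hardest step to be ruling out $w\equiv\text{const}$; for this I would use the minimal positive barriers $f_j$ on $E_j$ furnished by Theorem \ref{finiteDE}, together with their finite-radius approximants $g_{j,i}$ harmonic on $E_j\cap\Omega_i$, equal to $1$ on $\partial_1 E_j$, to $0$ on $\partial_1\Omega_i\cap E_j$, and Neumann on $\partial_0 E_j\cap\Omega_i$, which converge to $f_j$ in $C^{2,\alpha}_{\loc}$. Nonparabolicity of $E_j$ implies $f_j<1$ somewhere in its interior. Supposing $w\equiv c$, the uniform convergence $w_i\to c$ on the compact set $\partial_1 E_1\subset K$ lets me compare boundary data on $E_1\cap\Omega_i$: for any small $\epsilon>0$ and large $i$, the differences $(1-w_i)-(1-c\pm\epsilon)g_{1,i}$ have sign-appropriate boundary values on the interior (non-$\partial M$) boundary pieces of $E_1\cap\Omega_i$ and Neumann data on $\partial_0 E_1\cap\Omega_i$, so Theorem \ref{MaxiP} yields
\begin{align*}
(1-c-\epsilon)g_{1,i}\leq 1-w_i\leq (1-c+\epsilon)g_{1,i}\text{ on } E_1\cap\Omega_i.
\end{align*}
Passing to the limit $i\to\infty$ at an interior point of $E_1$ and then $\epsilon\to 0$ forces $f_1\equiv 1$ on $E_1$ unless $c=1$, contradicting nonparabolicity. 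The mirror sandwich $(c\pm\epsilon)g_{2,i}$ around $w_i$ on $E_2\cap\Omega_i$ forces $c=0$, which is incompatible; hence $w$ is nonconstant.

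For finite Dirichlet energy I would set $h_{1,i}:=1-g_{1,i}$. Since $w_i\geq h_{1,i}=0$ on $\partial_1 E_1$, $w_i=h_{1,i}=1$ on $\partial_1\Omega_i\cap E_1$, and both satisfy the Neumann condition on $\partial_0 E_1\cap\Omega_i$, the maximum principle gives $w_i\geq h_{1,i}$ on $E_1\cap\Omega_i$, whence $\partial_\nu w_i\leq\partial_\nu h_{1,i}$ on $\partial_1\Omega_i\cap E_1$. Integrating by parts on $\Omega_i$, using the Neumann condition along $\partial M$ and the Dirichlet data elsewhere,
\begin{align*}
\int_{\Omega_i}|\nabla w_i|^2=\int_{\partial_1\Omega_i\cap E_1}\partial_\nu w_i\leq\int_{\partial_1\Omega_i\cap E_1}\partial_\nu h_{1,i}=\int_{E_1\cap\Omega_i}|\nabla g_{1,i}|^2\leq C_0,
\end{align*}
where the last bound is the uniform Dirichlet-energy estimate obtained inside the proof of Theorem \ref{finiteDE}. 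Lower semicontinuity under the $C^{2,\alpha}_{\loc}$ convergence then yields $\int_M|\nabla w|^2\leq C_0$. The principal technical obstacle throughout is the mixed Dirichlet--Neumann regularity at the corner set $\partial_1\Omega_i\cap\partial M$, which is exactly what Theorems \ref{IntersectionAngle} and \ref{regularity} are designed to absorb uniformly in $i$.
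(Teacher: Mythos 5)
Your argument is correct, and it sits in the same Li--Tam framework as the paper (exhaustion, mixed Dirichlet--Neumann problems on $\Omega_i$, regularity at the corners via Theorems \ref{IntersectionAngle} and \ref{regularity}, compactness, maximum principle). The execution of the two key steps is genuinely different, though. The paper prescribes the boundary data on $\partial_1 B_{R_i}$ to be the barrier functions themselves --- $h_1$ on the sphere portion in $E_1$ and $1-h_2$ on the sphere portion in $E_2$ --- so that the maximum principle immediately traps the limit between $1-h_2$ and $h_1$ on the respective ends (giving nonconstancy) and the Dirichlet principle with the competitor glued from $h_1$, $1-h_2$ gives the uniform energy bound $C(\|\nabla f_1\|^2+\|\nabla h_1\|^2+\|\nabla h_2\|^2)$ in one line. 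You instead use the indicator data $\mathbf{1}_{E_1}$ on $\partial_1\Omega_i$, which costs you two extra arguments: the $\epsilon$-sandwich $(1-c\pm\epsilon)g_{1,i}$ against $1-w_i$ (and its mirror on $E_2$) to force the incompatible conclusions $c=1$ and $c=0$, and the flux comparison $\partial_\nu w_i\le\partial_\nu h_{1,i}$ on $\partial_1\Omega_i\cap E_1$ to transfer the uniform energy bound from Theorem \ref{finiteDE} to $w_i$. Both of these are sound --- in particular your flux identity $\int_{\Omega_i}|\nabla w_i|^2=\int_{\partial_1\Omega_i\cap E_1}\partial_\nu w_i$ correctly uses the Neumann condition and the vanishing Dirichlet data outside $E_1$, and your nonconstancy proof is actually more explicit than the paper's one-sentence justification. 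The trade-off is that the paper's choice of boundary data makes both conclusions nearly automatic, while yours is closer to the classical Li--Tam formulation and isolates exactly where nonparabolicity of \emph{each} end enters.
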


	\begin{proof}
		By definition of parabolicity, on each end $E_s(s=1,2)$ we can find a harmonic function $1\geq h_s(x)>0$ with $h_s|_{\partial_1 E} =1, \partial_{\nu} h_s \vert_{\partial_0 E}=0$. Using Lemma \ref{finiteDE}, we may assume that each $h_s$ has finite Dirichlet energy.
		
		We solve for harmonic functions $f_i$ on $B_{R_i}$ (again mollifying the boundary to get small intersection angle with $\partial M$) such that $f_{\partial_1 B_{R_i} \cap E_1}=h_1$, $f_{\partial_1 B_{R_i} \cap E_2}=1-h_2$, $f_i=0$ on other components of $\partial_1 B_{R_i}$, and $\partial_{\nu}f_i \vert_{\partial_0 M} =0$. Using a similar argument to that in section 4, we have that $$\sup_i\|\nabla f_i\|^2_{L^2(B_{R_i})}\leq C(\|\nabla f_1\|^2_{L^2(B_{R_1})}+\|\nabla h_1\|^2_{L^2}+\|\nabla h_2\|^2_{L^2})<\infty,$$ and that $f_i$ converges in $C^{2,\alpha}_{\loc}$ to a harmonic function on $M$ with finite Dirichlet energy. The function takes value in $[0,1]$ by maximum principle, and is nonconstant by arrangement at the two ends $E_1, E_2$. 
	\end{proof}

	\begin{theorem} \label{stableineq1}
		Let $(X^4,\partial X)$ be a complete manifold with $\Ric_2 \geq 0,$ and $(M^3,\partial M)$ a free boundary orientable stable minimal immersion, given a smooth harmonic function $u$ on M with Neumann boundary condition, we have the following estimates:
		\begin{align*}
			&\frac{1}{3}\int_M \phi^2 |\sff|^2 |\nabla u|^2 + \frac{1}{2}\int_M \phi^2 |\nabla |\nabla u||^2 \\
			\leq & \int_M |\nabla \phi|^2 |\nabla u|^2
			+ \int_{\partial M} |\nabla u| \nabla_{\nu} |\nabla u| \phi^2 - A(\eta,\eta)|\nabla u|^2\phi^2.
		\end{align*}
		Here $\sff$ is the second fundamental form of $M \rightarrow X$ and $A$ is the second fundamental form of $\partial X \rightarrow X$, $\nu \perp T\partial M$ in $TM$ and $\eta \perp M$ in $X$.

		If we have $A_2 \geq 0$, then:
		\begin{equation} \label{stabineq2}
			\frac{1}{3}\int_M \phi^2 |\sff|^2 |\nabla u|^2 + \frac{1}{2}\int_M \phi^2 |\nabla |\nabla u||^2 \leq \int_M |\nabla \phi|^2 |\nabla u|^2
		\end{equation}
	\end{theorem}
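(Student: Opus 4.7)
The plan is to derive the inequality from two ingredients: the free boundary stability inequality applied to the test function $f=\phi|\nabla u|$, and the Bochner identity for the harmonic function $u$, combined with refined Kato and Lemma \ref{RicXtoM2}. The free boundary stability inequality with $f=\phi|\nabla u|$ reads
\begin{equation*}
\int_M (|\sff|^2+\Ric_X(\eta,\eta))\phi^2|\nabla u|^2+\int_{\partial M} A(\eta,\eta)\phi^2|\nabla u|^2 \leq \int_M |\nabla(\phi|\nabla u|)|^2,
\end{equation*}
and expanding the right side gives $\int_M\phi^2|\nabla|\nabla u||^2+2\phi|\nabla u|\nabla\phi\cdot\nabla|\nabla u|+|\nabla u|^2|\nabla\phi|^2$. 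I would handle the cross term using Bochner.

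Next, for the harmonic function $u$, Bochner's formula on $M^3$ gives $\tfrac{1}{2}\Delta|\nabla u|^2=|\nabla^2 u|^2+\Ric_M(\nabla u,\nabla u)$. Multiplying by $\phi^2$ and integrating by parts (producing the boundary term $\int_{\partial M}\phi^2|\nabla u|\partial_\nu|\nabla u|$ and $-2\int_M\phi|\nabla u|\nabla\phi\cdot\nabla|\nabla u|$), then invoking the refined Kato inequality $|\nabla^2 u|^2\geq \tfrac{3}{2}|\nabla|\nabla u||^2$ (valid for harmonic functions in dimension $3$) together with Lemma \ref{RicXtoM2} (which yields $\Ric_M\geq -\tfrac{2}{3}|\sff|^2$ since $n=4$) produces an upper bound for the cross term $2\int_M\phi|\nabla u|\nabla\phi\cdot\nabla|\nabla u|$. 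Substituting this bound back into the stability inequality and discarding the nonnegative term $\int_M\Ric_X(\eta,\eta)\phi^2|\nabla u|^2$ (which is $\geq 0$ because $\Ric_2\geq 0$ implies $\Ric\geq 0$) rearranges to the first claim, with boundary contribution $\int_{\partial M}|\nabla u|\partial_\nu|\nabla u|\phi^2-A(\eta,\eta)|\nabla u|^2\phi^2$ on the right.

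For the second claim, I would show that the $A_2\geq 0$ hypothesis makes the boundary contribution nonpositive. The Neumann condition $\langle\nabla u,\nu\rangle=0$ on $\partial M$ means $\nabla u\in T\partial M$, so differentiating this identity along $\nabla u$ gives the pointwise formula
\begin{equation*}
|\nabla u|\partial_\nu|\nabla u|=\nabla^2u(\nabla u,\nu)=-\langle\nabla u,\nabla_{\nabla u}\nu\rangle=-\sff_{\partial M}(\nabla u,\nabla u) \quad \text{on } \partial M.
\end{equation*}
The free boundary condition identifies $\sff_{\partial M}$ with $A$ on $T\partial M$, so $|\nabla u|\partial_\nu|\nabla u|=-A(\nabla u,\nabla u)$. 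Setting $e_1=\nabla u/|\nabla u|\in T\partial M$ and observing that $\eta\in T\partial X$ (because $M\perp\partial X$ along $\partial M$) and $\eta\perp e_1$, the vectors $e_1,\eta$ form an orthonormal pair in $T_x\partial X$, whence $A_2\geq 0$ gives $A(e_1,e_1)+A(\eta,\eta)\geq 0$. Hence the boundary integrand $-A(\nabla u,\nabla u)-A(\eta,\eta)|\nabla u|^2$ is nonpositive, and dropping it yields the second inequality.

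The main obstacle I anticipate is not the algebra but the regularity of $|\nabla u|$ at its critical set, since Bochner's formula and Kato's inequality for $|\nabla u|$ break down where $\nabla u=0$. I would address this by replacing $|\nabla u|$ with $(|\nabla u|^2+\varepsilon)^{1/2}$, running the same computation (Bochner still applies to $|\nabla u|^2$), and sending $\varepsilon\to 0$ using dominated convergence on the compact support of $\phi$. A secondary technical point is verifying that $\phi$ may be taken as a valid Lipschitz test function in the free boundary stability inequality without requiring $\phi$ to vanish on $\partial M$; this is standard but needs the product $\phi|\nabla u|$ to have the correct free-to-move-along-$\partial X$ behavior, which is immediate since the stability inequality accepts any compactly supported Lipschitz $\phi$ without Dirichlet condition on $\partial M$.
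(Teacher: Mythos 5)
Your proposal is correct and takes essentially the same route as the paper: the stability inequality with test function $\phi|\nabla u|$ (regularized via $\sqrt{|\nabla u|^2+\epsilon}$), Bochner's formula with the improved Kato constant $\tfrac{3}{2}$ and Lemma~\ref{RicXtoM2} to absorb the cross term, and for the second inequality the observation that the Neumann condition turns $|\nabla u|\partial_\nu|\nabla u|$ into $-A(\nabla u,\nabla u)$ so that $A_2\geq 0$ applied to the orthonormal pair $\nabla u/|\nabla u|,\eta\in T\partial X$ kills the boundary term. The only cosmetic difference is that you state Bochner as a separate integrated identity used to bound the cross term, whereas the paper integrates the cross term by parts to produce $-\phi^2|\nabla u|\Delta|\nabla u|$ and then applies the pointwise Bochner/Kato/Ricci estimate; these are the same computation.
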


	\begin{proof}

		Using the second variation for orientable hypersurfaces we have for any family of immersion with speed $\frac{d}{dt}\big\vert_{t=0} \varphi_t(M)=\phi \eta$:
		\begin{align*}
			0&\leq \frac{d^2}{dt^2}\Big\vert_{t=0} \text{Area}(\varphi_t(M))\\
			&=\int_M |\nabla_M \phi|^2 - (|\sff|^2+\Ric(\eta,\eta))\phi^2 -\int_{\partial M} A(\eta,\eta)\phi^2
		\end{align*}

		Fixing any compact supported smooth function $\phi,$ we plug in $\sqrt{|\nabla u|^2 +\epsilon}\phi$ to the second variation formula then let $\epsilon \rightarrow 0$ to get the following,
		\begin{align*}
			0 &\leq \int_M |\nabla \phi|^2|\nabla u|^2 + \phi^2 |\nabla |\nabla u||^2+ \langle \nabla \phi^2, \nabla |\nabla u| \rangle |\nabla u| - |\sff|^2|\nabla u|^2 \phi^2 -\int_{\partial M} |\nabla u|^2 A(\eta,\eta)\phi^2\\
			&=\int_M |\nabla \phi|^2|\nabla u|^2 -|\nabla u|\Delta |\nabla u|\phi^2- |\sff|^2|\nabla u|^2 \phi^2 +\int_{\partial M} \phi^2(-|\nabla u|^2 A(\eta,\eta)+|\nabla u|\nabla_{\nu}|\nabla u|),
		\end{align*}
		here we have used that $\Ric_2 \geq 0$ implies $\Ric_X \geq 0$.
		Note over $M^{\circ}$ we have the following (see also \cite{ChodoLiStryk2022-CompleteStableMinimal}):
		\begin{align*}
			\Delta |\nabla u|^2 = 2 \Ric(\nabla u,\nabla u) + 2 |\nabla^2 u|^2&, \quad \text{Bochner's Formula}\\
			|\nabla^2 u|^2\geq \frac{3}{8} |\nabla u|^{-2} |\nabla |\nabla u|^2|^2&, \quad \text{Improved Kato's Inequality}\\
			\Ric(\nabla u,\nabla u)  \geq \frac{-2}{3} |\sff|^2|\nabla u|^2&, \quad \text{Lemma \ref{RicXtoM2}}
		\end{align*}
		These together imply $|\nabla u|\Delta |\nabla u| \geq \frac{-2}{3} |\sff|^2 |\nabla u|^2 +\frac{1}{2}|\nabla |\nabla u||^2,$ which we can plug into the last inequality, to get:
		\begin{equation*}
			\int_M \frac{1}{3} |\sff|^2 |\nabla u|^2 \phi^2 +\frac{1}{2}|\nabla |\nabla u||^2 \phi^2 \leq \int_M |\nabla \phi|^2 |\nabla u|^2 +\int_{\partial M} \phi^2(|\nabla u| \nabla_{\nu} |\nabla u|- |\nabla u|^2 A(\eta,\eta) ).
		\end{equation*}

		Note using Neumann condition we get $0=\nabla_{\nabla u}\langle \nabla u,\nu \rangle=\langle \nabla_{\nabla u} \nabla u,\nu \rangle+\langle \nabla u, \nabla_{\nabla u} \nu \rangle.$
		So we can compute the boundary terms:
		\begin{align*}
			\int_{\partial M} & |\nabla u| \nabla_{\nu} |\nabla u| \phi^2 - A(\eta,\eta)|\nabla u|^2\phi^2\\
			= \int_{\partial M} & - |\nabla u|^2(\langle \frac{\nabla u}{|\nabla u|},\nabla_{\frac{\nabla u}{|\nabla u|}} \nu\rangle + A(\eta,\eta))\phi^2)\\
			= \int_{\partial M} & - |\nabla u|^2(\langle \frac{\nabla u}{|\nabla u|},\nabla_{\frac{\nabla u}{|\nabla u|}} \nu\rangle+ \langle \eta, \nabla_\eta \nu \rangle)\phi^2
		\end{align*}
		Using that $A(e_1,e_1)+A(e_2,e_2) \geq 0$ if $e_1 \perp e_2$ (note $\eta \perp M$ while $\nabla u$ is along $M$), the above integrand over the boundary is now nonnegative, and  we have the inequality:
		\begin{equation*}
			\frac{1}{3}\int_M \phi^2 |\sff|^2 |\nabla u|^2 + \frac{1}{2}\int_M \phi^2 |\nabla |\nabla u||^2 \leq \int_M |\nabla \phi|^2 |\nabla u|^2
		\end{equation*}
	\end{proof}

	\begin{theorem} \label{1npend}
		Let $(X^4,\partial X)$ be a complete manifold with $\Ric_2 \geq 0,$ and the boundary of $X$ has second fundamental form satisfying $A_2 \geq 0,$  and $(M,\partial M)$ a free boundary orientable stable minimal immersion with infinite volume, then for any compact set $K \subset M$, there is at most 1 nonparabolic component in $M \setminus K$. In particular, M has at most one non-parabolic end.
	\end{theorem}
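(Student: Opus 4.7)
The approach is the classical Schoen--Yau strategy adapted to the free boundary setting: convert two nonparabolic ends into a global harmonic function with finite Dirichlet energy, plug it into the improved stability inequality, and use the infinite volume hypothesis to reach a contradiction. Assume for contradiction that $M\setminus K$ has two distinct nonparabolic components $E_1, E_2$ for some compact $K$. Then Theorem \ref{HarFinEne} produces a nonconstant bounded harmonic function $u\in C^{2,\alpha}(M)$ with Neumann boundary condition $\p_{\nu}u|_{\p M}=0$ and finite Dirichlet energy $\int_M|\nabla u|^2<\infty$.

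Fix $p\in M$ and for each $R>0$ choose a Lipschitz cutoff $\phi_R$ with $\phi_R\equiv 1$ on the intrinsic geodesic ball $B_R^M(p)$, $\phi_R\equiv 0$ outside $B_{2R}^M(p)$, and $|\nabla\phi_R|\le 2/R$; since $M$ is complete, $\phi_R$ is compactly supported. Applying inequality (\ref{stabineq2}) of Theorem \ref{stableineq1} (which uses $\Ric_2\ge 0$ together with $A_2\ge 0$) to $u$ and $\phi_R$ gives
\begin{equation*}
\tfrac{1}{3}\int_M \phi_R^2|\sff|^2|\nabla u|^2+\tfrac{1}{2}\int_M \phi_R^2|\nabla|\nabla u||^2\le \int_M|\nabla\phi_R|^2|\nabla u|^2\le \frac{4}{R^2}\int_{B_{2R}^M(p)\setminus B_R^M(p)}|\nabla u|^2.
\end{equation*}
Since $|\nabla u|^2\in L^1(M)$, the annular tail on the right tends to $0$ as $R\to\infty$, and monotone convergence on the left then forces both $|\sff|\,|\nabla u|\equiv 0$ and $\nabla|\nabla u|\equiv 0$ on $M$.

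The second identity says $|\nabla u|$ is constant on $M$ (assumed connected). If that constant were positive, the infinite volume hypothesis would give $\int_M|\nabla u|^2=\infty$, contradicting the finite Dirichlet energy of $u$; hence $|\nabla u|\equiv 0$, so $u$ is constant, contradicting the nonconstancy from Theorem \ref{HarFinEne}. This rules out two nonparabolic components of $M\setminus K$ and establishes the first claim. The ``in particular'' statement about ends follows by enlarging $K$ to realize any two given nonparabolic ends as distinct nonparabolic components of $M\setminus K$.

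The main obstacle is ensuring that the boundary term in the stability inequality carries a favorable sign; this is precisely what the hypothesis $A_2\ge 0$ buys us. As recorded in the proof of Theorem \ref{stableineq1}, combining $A_2\ge 0$ with the Neumann condition on $u$ (which makes $\nabla u$ tangent to $\p M$, hence perpendicular to the ambient normal $\eta$) converts the boundary integrand $-|\nabla u|^2\bigl(\langle \nabla u/|\nabla u|,\nabla_{\nabla u/|\nabla u|}\nu\rangle+A(\eta,\eta)\bigr)$ into a nonnegative quantity, so inequality (\ref{stabineq2}) is clean and the cutoff argument closes. A secondary subtlety is the regularity of $u$ at corners of the domains used in Theorem \ref{HarFinEne}, but this has already been addressed by Theorem \ref{regularity} via the small-angle condition $\theta\in(0,\pi/4)$.
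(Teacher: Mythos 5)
Your proof is correct and follows essentially the same route as the paper: both invoke Theorem \ref{HarFinEne} to produce a nonconstant bounded harmonic function with finite Dirichlet energy, plug it into inequality (\ref{stabineq2}) with a linear logarithmic-type cutoff, and conclude $|\nabla u|$ is constant, which combined with infinite volume and finite Dirichlet energy forces $u$ constant, a contradiction. The only cosmetic difference is the cutoff: you use a single-parameter $\phi_R$ (equal to $1$ on $B_R$, $0$ outside $B_{2R}$) with the tail controlled by integrability of $|\nabla u|^2$, while the paper uses a two-parameter cutoff $(R_1,R_i)$ and the crude bound $4\int_M|\nabla u|^2/(R_i-R_1)^2\to0$; both close the argument equally well.
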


	\begin{proof}
		Since we can apply inequality (\ref{stabineq2}) of Theorem (\ref{stableineq1}), we have, for any compactly supported smooth function $\phi,$
		\begin{equation*}
			\frac{1}{3}\int_M \phi^2 |\sff|^2 |\nabla u|^2 + \frac{1}{2}\int_M \phi^2 |\nabla |\nabla u||^2 \leq \int_M |\nabla \phi|^2 |\nabla u|^2.
		\end{equation*}
		We can proceed as in \cite{ChodoLiStryk2022-CompleteStableMinimal}. Suppose there are two nonparabolic components $E_1, E_2$, we can find a nonconstant harmonic function with finite Dirichelt energy and Neumann boundary condition on $M$ by Theoerem \ref{HarFinEne} . We build the cut-off function based on $\rho(x):$ fix $z\in M$, $\rho$ is a mollification of $d_M(\cdot, z)$ such that $\rho \vert_{\partial B_{R_i}(z)} =R_i$ and $|\nabla \rho| \leq 2$. The cut-off $\phi_i(x)$ is equal to $1$ in $B_{R_1}(z)$, it's equal to $0$ outside $B_{R_i}(z)$ and equal to $\frac{R_i -\rho(x)}{R_i-R_1}$ otherwise (we may assume  $B_{R_1}(z) \subset K$).

		Then using $\phi_i$  we have as $R_i \rightarrow \infty$:
	\begin{align*}
		\frac{1}{3}\int_M \phi_i^2 |\sff|^2 |\nabla u|^2 + \frac{1}{2}\int_M \phi_i^2 |\nabla |\nabla u||^2 \leq \int_M |\nabla \phi_i|^2 |\nabla u|^2 \leq \frac{4\int_M |\nabla u|^2}{(R_i-R_1)^2} \rightarrow 0.
	\end{align*}

	So we get that  $|\sff| |\nabla u|=0=|\nabla |\nabla u||$ over $B_{R_1}(z)$, and letting $R_1 \rightarrow \infty$ gives us the two terms vanish on $M$. So $|\nabla u|$ is constant, and using $u$ has finite Dirichlet energy on $M$ which has infinite volume, we must have $\nabla u=0$, a contradiction since $u$ is nonconstant. 
	\end{proof}

	\section{$\mu$-bubble and Almost Linear Volume Growth} \label{mububble}

	We begin with some background on Caccioppoli sets used in our setting for free boundary $\mu$-bubbles. One can find preliminaries of Caccioppoli sets or $\mu$-bubble in \cite{Maggi2012-SetsFinitePerimeter}, \cite{ChodoLiStryk2022-CompleteStableMinimal}.

	\begin{definition}
		A measurable set $\Omega$ in a compact Riemannian manifold $N^l$ is called a Caccioppoli set (or a set of finite perimeter) if its characteristic function $\chi_{\Omega}$ is a function of bounded variation, i.e. the following is finite:
		\begin{align*}
			P(\Omega):=\sup\Big\{\int_\Omega \dive(\phi), \phi \in C^1_0(N^{\circ},\R^l), \|\phi\|_{C^0} \leq 1\Big\},
		\end{align*}
		We call $P(\Omega)$ the perimeter of $\Omega$ inside $N$(it's also equal to the $BV-$norm of $\chi_{\Omega}$ inside $N$).
	\end{definition}

	Using Riesz Representation theorem, the distributional derivative $\nabla (\chi_{\Omega})$ is a Radon measure and we can find a Borel set (up to change of zero measure) whose topological boundary is equal the support of this measure (see \cite{Maggi2012-SetsFinitePerimeter}). We will always assume $\Omega$ is such a set and use $\p \Omega$ to denote its reduced boundary. We note in \cite{Maggi2012-SetsFinitePerimeter} the reduced boundary is denoted as $\p^{\star} \Omega$ and is contained in the topological boundary, by De Giorgi's structure theorem the $l-1$ dimensional Hausdorff measure of $\p^{\star} \Omega$ is equal to $P(\Omega)$. The next lemma establishes regularity of $\p \Omega$ for minimizers of an appropriate functional.

	Consider a compact Riemannian manifold $N^3$ with boundary $\partial N =\partial_0 N \cup \partial_- N \cup \partial_+ N$ ($\partial_i N$ is nonempty for $i\in\{0,-,+\}$), where $\partial_- N$ and $\partial_+ N$ are disjoint and each of them intersect with $\partial_0 N$ at angles no more than $\pi/8$ inside $N$ . We fix a smooth function $u>0$ on $N$ and a smooth function $h$ on $N\setminus (\partial_- N \cup \partial_+ N)$, with $h\rightarrow \pm\infty$ on $\partial_{\pm} N$. We pick a regular value $c_0$ of $h$ on $N\setminus (\partial_- N \cup \partial_+ N)$ and pick $\Omega_0=h^{-1}((c_0,\infty))$. We want to find a minimizer among Caccioppoli sets for the following functional:
	\begin{equation}
		\mathcal{A}(\Omega):=\int_{\partial \Omega} u-\int_N (\chi_{\Omega}-\chi_{\Omega_0})hu.
	\end{equation}

	\begin{lemma}[Existence of Minimizers]
		There is a minimizer $\Omega$ for the above functional $\mathcal{A}$. The minimizer has smooth boundary which intersects with $\partial_0 N$ orthogonally. Also $\Omega \triangle\Omega_0$ is a compact subset in $N^{\circ} \cup \partial_0 N$.
	\end{lemma}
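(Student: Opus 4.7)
The plan is to run the direct method of the calculus of variations, combined with the Gromov-type barrier argument of the $\mu$-bubble theory in the style of \cite{ChodoLiStryk2022-CompleteStableMinimal}, adapted to this mixed boundary setting. I would take a minimizing sequence $\Omega_i$, use the blow-up of $h$ on $\partial_{\pm} N$ to confine the sequence to a compact region of $N^\circ \cup \partial_0 N$, extract a BV-limit, and then verify interior and free boundary regularity together with orthogonal intersection along $\partial_0 N$.

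Since $\mathcal{A}(\Omega_0) = \int_{\partial \Omega_0} u$ is finite, $\inf \mathcal{A} < \infty$. The key step is the barrier argument. By Sard's theorem applied to $h$ and to $h|_{\partial_0 N}$, I would choose a large regular value $t_0 > 0$ so that $\{h = t_0\}$ is a smooth hypersurface transverse to $\partial_0 N$, contained in a narrow tubular neighborhood of $\partial_+ N$, with $\int_{\{h = t_0\}} u \leq C_0$; analogously pick a regular value $-t_0$ near $\partial_- N$. For any near-minimizer $\Omega$, a direct comparison with $\Omega \cup \{h > t_0\}$ gives
\begin{equation*}
\mathcal{A}(\Omega \cup \{h > t_0\}) - \mathcal{A}(\Omega) \leq C_0 - t_0 \bigl(\min_{\{h \geq t_0\}} u\bigr)\, |\{h > t_0\} \setminus \Omega|,
\end{equation*}
which is negative for $t_0$ large unless $|\{h > t_0\} \setminus \Omega| = 0$. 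A symmetric argument with $\Omega \setminus \{h < -t_0\}$ forces $|\Omega \cap \{h < -t_0\}| = 0$. Thus any near-minimizer satisfies $\{h > t_0\} \subset \Omega \subset \{h > -t_0\}$ modulo null sets, and in particular $\Omega \triangle \Omega_0$ is contained in the fixed compact set $K := \{|h| \leq t_0\}$, which lies in $N^\circ \cup \partial_0 N$ because $h$ blows up on $\partial_\pm N$.

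The confinement provides a uniform BV bound along the minimizing sequence, so BV compactness yields a subsequence with $\chi_{\Omega_i} \to \chi_{\Omega}$ in $L^1$. Lower semicontinuity of the weighted perimeter $\int_{\partial \Omega} u$ under $L^1$ convergence (via its dual characterization involving $\dive(u \phi)$), together with continuity of the volume term on the bounded region $K$ where $hu$ is bounded, gives that $\Omega$ is a minimizer. For regularity, the first variation of $\mathcal{A}$ under compactly supported interior normal deformations $\varphi \nu$ gives the Euler--Lagrange equation $H_{\partial \Omega} = h - \partial_\nu \log u$, and classical regularity for prescribed-mean-curvature minimizers (De Giorgi--Federer) in dimension $3$ shows $\partial \Omega$ is smooth in $N^\circ$. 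Variations tangent to $\partial_0 N$ produce the orthogonality condition at $\partial_0 N$, and free boundary regularity in the spirit of Gr\"uter--Jost upgrades this to smoothness up to $\partial_0 N$; the barrier of the previous step keeps $\partial \Omega$ away from $\partial_\pm N$ and from the corners.

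The main obstacle I anticipate is the barrier step in the presence of the corners: $t_0$ must be simultaneously a regular value of $h$ and of $h|_{\partial_0 N}$ so that $\{h = t_0\}$ is a smooth hypersurface meeting $\partial_0 N$ transversally, ensuring $\Omega \cup \{h > t_0\}$ is a genuine Caccioppoli set with the claimed perimeter estimate. This transversality follows from Sard applied to both $h$ and $h|_{\partial_0 N}$, together with the small corner angle $\leq \pi/8$, but must be verified carefully to make the comparison argument rigorous in the mixed boundary setting.
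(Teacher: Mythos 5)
Your overall strategy (direct method plus a barrier confinement plus Gr\"uter--Jost free boundary regularity) matches the paper's, and the regularity discussion is broadly consistent with the paper's appeal to De Philippis--Maggi for $C^{1,1/2}$ regularity and bootstrap via the first variation. But the barrier step contains a genuine logical gap. From
\begin{equation*}
\mathcal{A}(\Omega \cup \{h > t_0\}) - \mathcal{A}(\Omega) \leq C_0 - t_0 \bigl(\min_{\{h \geq t_0\}} u\bigr)\, |\{h > t_0\} \setminus \Omega|
\end{equation*}
you conclude that for $t_0$ large this is negative unless $|\{h>t_0\}\setminus\Omega|=0$. That implication is false: the right-hand side is negative only when $|\{h>t_0\}\setminus\Omega|>C_0/(t_0\min u)$, so the comparison rules out a \emph{large} symmetric difference, not a positive one. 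What you actually get is a volume bound $|\{h>t\}\setminus\Omega_k|\lesssim 1/t$. That is not strong enough to run the direct method: writing $\int_{\partial\Omega_k}u=\mathcal{A}(\Omega_k)+\int_N(\chi_{\Omega_k}-\chi_{\Omega_0})hu$, the volume integral is taken against $hu$, which is unbounded near $\partial_\pm N$, and a bound $|\{h>t\}\setminus\Omega_k|\lesssim 1/t$ integrated against $h\sim t$ only gives a logarithmically divergent estimate. Without a genuine confinement of $\Omega_k\triangle\Omega_0$ to a fixed compact subset of $N^\circ\cup\partial_0 N$, the uniform BV bound and hence compactness of the minimizing sequence do not follow.

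The fix --- and what the paper delegates to Proposition 12 of Chodosh--Li--Stryker --- is a coarea pigeonhole rather than a one-shot comparison. Integrating the comparison over a slab of slice levels $s\in(t_0,2t_0)$ and applying the coarea formula, one finds some level $s$ at which the weighted slice measure $\int_{\{h=s\}\cap\Omega^{(0)}}u$ is controlled by $|\{h>t_0\}\setminus\Omega|\cdot\sup_{\{t_0<h<2t_0\}}|\nabla h|/t_0$; for the chosen $h=-\tan(\cdot)$ this is dominated by the volume gain, so $\Omega\cup\{h>s\}$ does not increase $\mathcal{A}$. Replacing each $\Omega_k$ this way (and symmetrically at $\partial_- N$) yields a minimizing sequence with $\Omega_k\supset\{h>2t_0\}$ and $\Omega_k\cap\{h<-2t_0\}=\emptyset$ for a \emph{fixed} $t_0$, which is the confinement you need. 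Your concern about choosing $t_0$ to be a simultaneous regular value of $h$ and $h|_{\partial_0 N}$ is legitimate but secondary: once confinement holds, $\partial\Omega$ stays in a fixed compact subset of $N^\circ\cup\partial_0 N$ away from the corners $\partial_0 N\cap\partial_\pm N$, and the free boundary regularity along $\partial_0 N$ is then unaffected by the corners.
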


	\begin{proof}
		We can take $\Omega_0$ as a candidate so the infimum value of $\mathcal{A}$ is finite. Now we take a minimizing sequence $\Omega_k$. 
		Using approximate identity $\varphi_{k_j}$, we have $\chi_{\hat{\Omega}}-\chi_{\Omega_0}:=\chi_{\Omega} \star \varphi_{k_j}-\chi_{\Omega_0}$ converges in $L^p(p\geq 1)$ to $\chi_{\Omega}-\chi_{\Omega_0},$ together with that the $BV-$norm is lower semicontinuous with respect to $L^1-$norm, we can apply mollification to assume each $\chi_{\Omega_k}$ has smooth boundary. 

		Now note that since $h\rightarrow \pm\infty$ on $\partial_{\pm} N$, we may assume each $\Omega_k$ contains some fixed small neighborhood $\Omega_{\tau,+}$ of $\partial_{+} N$ and must not contain some fixed small neighborhood $\Omega_{\tau,-}$ of $\partial_{-} N$ for a $\tau>0$ (this is proved in details in \cite{ChodoLiStryk2022-CompleteStableMinimal}) Proposition 12. 
		
		So the function $(\chi_{\Omega_k}-\chi_{\Omega_0})hu$ is supported on the compact set $N \setminus \Omega_{\tau,\pm}$ and uniformly bounded in $k$, 
		since there is some $\delta>0$ so that $u>\delta>0$ on $N \setminus \Omega_{\tau,\pm}$ and $\Omega_k$ is a minimizing sequence, we get that the BV-norm of $\Omega_k$ is uniformly bounded in k, and so a subsequence converge in the following sense: $\nabla(\chi_{\Omega_k})$ in the weak$^{\star}$ sense as Radon Measures, $\chi_{\Omega_k}$  in the $L^1$ sense, and the limit $\chi_{\Omega_{\infty}}$ is also a BV function. Therefore $\mathcal{A}(\Omega_{\infty})=\lim_k \mathcal{A}(\Omega_{k}),$ and we found a minimizer.

		We note that regularity of free boundary minimal hypersurfaces has been established by Jost-G\"ruter \cite{GruteJost1986-AllardTypeRegularity}, corresponding to the case $u=1$ and $h=0$ for the functional $\mathcal{A}$. For general ellipic integrand and almost minimizers, by De Philippis and Maggi \cite{DePhMaggi2014-RegularityFreeBoundaries} Theorem 1.10 or \cite{DePhMaggi2014-DimensionalEstimatesSingular} Theorem 1.5, we have that $\p \Omega$ is a $C^{1,\frac{1}{2}}$ hypersurface in N interesecting with $\p_0 N$ orthogonally by the first variation formula given below, which also gives us the mean curvature (exists weakly \textit{a priori}) is a smooth function, this gives smoothness of $\p \Omega$.
	\end{proof}

	We now compute the first and second variation for such minimizers.

	\begin{theorem} \label{VarForm}
		Assume $\Omega$ is a minimizer of $\mathcal{A}$ in the settings above, we have the following first  variation formula, writing $\Sigma =\p \Omega$,
		\begin{equation*}
			\nabla_{\nu_{\Sigma}} u-hu+uH_{\Sigma} =0 \text{ on } \Sigma, \quad \nu_{\partial \Sigma} (x) \perp T_x\partial N \quad \text{for }x \in \partial \Sigma \subset \partial_0 N ,
		\end{equation*}
		and the second variation formula,
		\begin{align*}
			&\frac{d^2}{dt^2}\Big\vert_{t=0}(\mathcal{A}(\varphi_t(\Omega)))\\
			=&\int_{\Sigma} |\nabla_{\Sigma} \phi|^2u  -\frac{u\phi^2}{2}(R_{N}-R_{\Sigma}+|\sff|^2+H^2_{\Sigma})+ \phi^2 (\Delta_N u-\Delta_{\Sigma}u-\nabla_{\nu_{\Sigma}} (hu))\\
			& -\int_{\partial \Sigma} u  \phi^2 A(\nu_{\Sigma},\nu_{\Sigma})\\
			\leq & \int_{\Sigma} |\nabla \phi|^2u  -\frac{u\phi^2}{2}(R_{N}-R_{\Sigma})+\phi^2(\Delta_N u-\Delta_{\Sigma}u-u\nabla_{\nu_{\Sigma}} h-\frac{h^2u}{2}-\frac{u^{-1}}{2}(\nabla_{\nu_{\Sigma}}u)^2)\\
			& -\int_{\partial \Sigma} u  \phi^2 A(\nu_{\Sigma},\nu_{\Sigma})
		\end{align*}
	\end{theorem}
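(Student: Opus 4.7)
The approach is to take a smooth 1-parameter family $\{\varphi_t\}$ of diffeomorphisms of $N$ generated by a compactly supported vector field $X$ that is tangent to $\partial_0 N$ along $\partial_0 N$ and whose support is disjoint from $\partial_\pm N$ (this is an admissible class because $\Omega\triangle\Omega_0\Subset N^\circ\cup\partial_0 N$). Write $\Omega_t=\varphi_t(\Omega),\ \Sigma_t=\varphi_t(\Sigma)$ and set $\phi=\langle X,\nu_\Sigma\rangle$ on $\Sigma$.

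\emph{First variation.} The divergence theorem on $\Sigma$ for the weighted area and direct differentiation of the bulk volume integral give
\[
\frac{d}{dt}\Big\vert_{t=0}\mathcal{A}(\Omega_t)=\int_\Sigma (uH_\Sigma+\nabla_{\nu_\Sigma}u-hu)\phi+\int_{\partial\Sigma}u\langle X,\nu_{\partial\Sigma}\rangle,
\]
with no boundary contribution from the volume term because $X\parallel\partial_0 N$. Testing first with $X$ supported in $\Sigma^\circ$ yields the bulk Euler--Lagrange equation $uH_\Sigma+\nabla_{\nu_\Sigma}u-hu=0$; then, allowing $X$ to be an arbitrary vector field tangent to $\partial_0 N$ near $\partial\Sigma$, the vanishing of the boundary integral forces $\nu_{\partial\Sigma}(x)\perp T_x\partial N$ for $x\in\partial\Sigma$.

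\emph{Second variation.} Differentiating the first variation once more at $t=0$: because $uH_\Sigma+\nabla_{\nu_\Sigma}u-hu$ vanishes on $\Sigma$, only the Lagrangian derivatives of that expression along the material point $\varphi_t(p)$ survive. I use the standard identities
\[
\frac{d}{dt}\Big\vert_{0}H_{\Sigma_t}=-\Delta_\Sigma\phi-(|\sff|^2+\Ric_N(\nu,\nu))\phi,\quad \frac{d}{dt}\Big\vert_{0}(\nabla_{\nu_t}u)=\phi\,\nabla^2 u(\nu,\nu)-\langle\nabla_\Sigma u,\nabla_\Sigma\phi\rangle,
\]
together with $\tfrac{d}{dt}|_0(hu)=\phi\nabla_{\nu_\Sigma}(hu)$. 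Substituting and integrating $-u\phi\Delta_\Sigma\phi$ by parts cancels the $\phi\langle\nabla_\Sigma u,\nabla_\Sigma\phi\rangle$ cross-terms and produces $\int_\Sigma u|\nabla_\Sigma\phi|^2$. Then I invoke the Gauss identity $|\sff|^2+\Ric_N(\nu,\nu)=\tfrac{1}{2}(R_N-R_\Sigma+H_\Sigma^2+|\sff|^2)$ and the trace decomposition $\nabla^2 u(\nu,\nu)=\Delta_N u-\Delta_\Sigma u-H_\Sigma\nabla_{\nu_\Sigma}u$, whereby the stray term $\phi^2 H_\Sigma\nabla_{\nu_\Sigma}u$ arising from $\tfrac{d}{dt}(uH_{\Sigma_t})$ combines with $\phi^2\nabla^2 u(\nu,\nu)$ into $\phi^2(\Delta_N u-\Delta_\Sigma u)$. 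The boundary contribution $-\int_{\partial\Sigma}uA(\nu_\Sigma,\nu_\Sigma)\phi^2$ then arises from combining the $-\int_{\partial\Sigma}u\phi\nabla_{\nu_{\partial\Sigma}}\phi$ produced by the integration by parts with the derivative $\tfrac{d}{dt}|_0\int_{\partial\Sigma_t}u\langle X,\nu_{\partial\Sigma_t}\rangle$, where one uses the first-variation orthogonality $\nu_{\partial\Sigma}=\nu_{\partial_0 N}$ and the identity $\langle\nabla_{\nu_\Sigma}\nu_\Sigma,\nu_{\partial_0 N}\rangle=-A(\nu_\Sigma,\nu_\Sigma)$ at $\partial\Sigma$.

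\emph{Upper bound.} Using the Euler--Lagrange relation $\nabla_{\nu_\Sigma}u-hu=-uH_\Sigma$, completing the square gives the identity
\[
-\nabla_{\nu_\Sigma}(hu)=-u\nabla_{\nu_\Sigma}h-\tfrac{1}{2}h^2u-\tfrac{1}{2}u^{-1}(\nabla_{\nu_\Sigma}u)^2+\tfrac{1}{2}uH_\Sigma^2,
\]
because the two sides differ by $\tfrac{1}{2}u^{-1}(\nabla_{\nu_\Sigma}u-hu)^2-\tfrac{1}{2}uH_\Sigma^2=0$. Substituting this into the equality, the new $\tfrac{1}{2}uH_\Sigma^2\phi^2$ exactly cancels the $-\tfrac{1}{2}u\phi^2H_\Sigma^2$ already present, and the only remaining discardable term is $-\tfrac{1}{2}u\phi^2|\sff|^2\leq 0$, which drops out to yield the stated inequality. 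The main technical obstacle is the careful bookkeeping of boundary contributions in the second variation: unlike the closed case, one must track how the free-boundary condition $\nu_{\partial\Sigma}\perp T\partial_0 N$ deforms under $\varphi_t$ so that the second fundamental form $A$ of $\partial_0 N$ appears with exactly the right coefficient and sign; once this matching is done, the rest is routine manipulation of standard identities.
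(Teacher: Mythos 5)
Your proposal is correct and follows essentially the same route as the paper: compute the first variation of $\mathcal{A}$, read off the Euler--Lagrange equation and the free-boundary orthogonality, differentiate once more at the critical point using the standard evolution identities for $H_{\Sigma_t}$, $\nabla_{\nu_t}u$, and $\nu_{\Sigma_t}$, apply the traced Gauss equation and the Hessian trace decomposition, and integrate $-u\phi\Delta_\Sigma\phi$ by parts to produce $\int_\Sigma u|\nabla_\Sigma\phi|^2$ together with the boundary term converting to $-\int_{\partial\Sigma}uA(\nu_\Sigma,\nu_\Sigma)\phi^2$.

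One small stylistic difference worth noting: for the passage from the equality to the inequality, the paper first discards $|\mathring{\sff}|^2$, writing $|\sff|^2\geq \tfrac12 H_\Sigma^2$ to get a $-\tfrac34 uH_\Sigma^2\phi^2$ term, and then uses the Euler--Lagrange relation together with a slightly slack algebraic inequality. You instead drop $|\sff|^2\geq 0$ entirely and observe that the Euler--Lagrange relation makes the remaining substitution of $-\nabla_{\nu_\Sigma}(hu)$ an exact identity (the $\tfrac12 uH_\Sigma^2\phi^2$ terms cancel precisely). Your version is cleaner and makes transparent that the only thing discarded is $-\tfrac12 u\phi^2|\sff|^2\leq 0$; both reach the same stated bound. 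Regarding the boundary term: you phrase it as "combining" the integration-by-parts term $-\int_{\partial\Sigma}u\phi\nabla_{\nu_{\partial\Sigma}}\phi$ with the derivative of the first-variation boundary integral $\int_{\partial\Sigma_t}u\langle X,\nu_{\partial\Sigma_t}\rangle$, whereas for the exponential-map normal variation the paper uses, that second integrand is identically zero (since $\nu_{\Sigma_t}\perp T\Sigma_t\ni\nu_{\partial\Sigma_t}$), so the full contribution comes from the integration by parts alone; this is only a difference in bookkeeping, and the identity $\langle\nabla_{\nu_\Sigma}\nu_\Sigma,\nu_{\partial_0 N}\rangle=-A(\nu_\Sigma,\nu_\Sigma)$ you invoke is exactly the one the paper uses in its final line.
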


	\begin{proof}
		The computation follows similarly from first and second variation formula of free boundary minimal hypersurfaces.
		We consider a family of diffeomorphism $\varphi_t$ of $N$ with vector field $X_t$, notice that if $x \in \partial N$ then $X_t \in T_x \partial N$. Let $\partial \Omega_t =:\Sigma_t$, the first variation is given as:
		\begin{align*}
			\frac{d}{dt}(\mathcal{A}(\varphi_t(\Omega)))&=\frac{d}{dt} \int_{\partial \Omega_t} u - \frac{d}{dt}\int_{\Omega_t} hu\\
			&=\int (\frac{d}{dt}u)dvol_{\partial \Omega_t}+ \int u \frac{d}{dt} dvol_{\partial_{\Omega_t}} - \int hu \frac{d}{dt} dvol_{\Omega_t}\\
			&= \int (\nabla_{X_t} u) dvol_{\partial \Omega_t} -\int (hu) \langle X_t, \nu_{\partial \Omega_t}\rangle dvol_{\partial \Omega_t} + \int (u \dive_{\partial \Omega_t} X_t) dvol_{\partial \Omega_t} \\
			&= \int (\nabla_{X_t} u) dvol_{\partial \Omega_t}  -\int (hu)\langle X_t, \nu_{\partial \Omega_t}\rangle  dvol_{\partial \Omega_t}+\int (u \dive_{\partial \Omega_t} X_t) dvol_{\partial \Omega_t}\\
			&=\int (\nabla_{X_t} u) dvol_{\Sigma_t}  -\int (hu) \langle X_t, \nu_{\Sigma_t}\rangle dvol_{\Sigma_t}+ \int (u \dive_{\Sigma_t} (X^{\perp}_t+X^{\top}_t)) dvol_{\Sigma_t}\\
			&=\int (\langle \nabla u, (X_t-X^{\top}_t)\rangle - hu\langle X_t, \nu_{\Sigma_t}\rangle-u\vec{H_{\Sigma_t}}\cdot X^{\perp}_t ) dvol_{\Sigma_t}+\int_{\partial \Sigma_t} u \langle X_t, \nu_{\partial \Sigma_t}\rangle \\
			&= \int (\nabla u \cdot X^{\perp}_t -hu\langle X_t, \nu_{\Sigma_t}\rangle-u\vec{H_{\Sigma_t}}\cdot X^{\perp}_t ) dvol_{\Sigma_t} + \int_{\partial \Sigma_t} u \langle X_t, \nu_{\partial \Sigma_t}\rangle
		\end{align*}

		Note we used the convention that mean curvature $H$ is defined as the trace of the second fundamental form and hence $H_{\Sigma}=-\langle\nabla_{e_i} e_i,\nu_{\Sigma}\rangle=-\vec{H_{\Sigma}}\cdot \nu_{\Sigma}$.
		So at $t=0$ we have $\nabla_{\nu_{\Sigma}} u-hu+uH_{\Sigma} =0$ on $\Sigma,$ and that $\nu_{\partial \Sigma} (x) \perp T_x\partial N$ for $x \in \partial \Sigma \subset \partial N.$

		Now we continue with the second variation :
		\begin{align*}
			&\frac{d}{dt}\Big\vert_{t=0}(\mathcal{A}'(\varphi_t(\Omega))-\int_{\partial \Sigma_t} u \langle X_t, \nu_{\partial \Sigma_t}\rangle)\\
			=& \int \frac{d}{dt}\Big\vert_{t=0}(\nabla u \cdot X^{\perp}_t -hu\langle X_t, \nu_{\Sigma_t}\rangle-u\vec{H_{\Sigma_t}}\cdot X^{\perp}_t ) dvol_{\Sigma}\\
			=&\int \phi_t \frac{d}{dt} \Big\vert_{t=0}(\nabla u \cdot \nu_{\Sigma_t} -hu+uH_{\Sigma_t})dvol_{\Sigma}\\
			=& \int \phi_t (\partial_t \langle\nabla u,  \nu_{\Sigma_t}\rangle-\nabla_{X_t}(hu)+ (\nabla_{X_t} u) H_{\Sigma_t}+ u \partial_t H_{\Sigma_t}) dvol_{\Sigma},  \quad \text{at $t=0$}.\\
		\end{align*}
			Since at $t=0, \partial \Sigma \cap \partial N$ orthogonally, using the exponential map near $\partial \Sigma$, for any smooth function $\phi_t$, the diffeomorphism near $\Sigma$ given by $\Sigma \times (-\epsilon,\epsilon) \ni (x,t) \rightarrow \exp_x(t\nu_x)$ is admissible and produce a normal variation near $\Sigma$.
			We will also use $\Delta_N u-\Delta_{\Sigma}u=\nabla^2 u(\nu_{\Sigma},\nu_{\Sigma})+H_{\Sigma}\nabla_{\nu_{\Sigma}}u$:
		\begin{align*}
			&\frac{d}{dt}\Big\vert_{t=0}(\mathcal{A}'(\varphi_t(\Omega))-\int_{\partial \Sigma_t} u \langle X_t, \nu_{\partial \Sigma_t}\rangle)\\
			=&\frac{d}{dt}\Big\vert_{t=0}(\mathcal{A}'(\varphi_t(\Omega)))\\
			=& \int \phi_t^2(\nabla^2u(\nu_{\Sigma_t},\nu_{\Sigma_t})-\nabla_{ \nu_{\Sigma_t}}(hu)+H_{\Sigma_t}\nabla_{ \nu_{\Sigma_t}}u) +\phi_t \langle\nabla u,\p_t \nu_{\Sigma_t}\rangle dvol_{\Sigma}\\
			&+ \int u \phi_t(-\Delta_{\Sigma_t} \phi_t-\phi_t(|\sff_{\Sigma_t}|^2+\Ric_N(\nu_{\Sigma_t},\nu_{\Sigma_t})))dvol_{\Sigma}  ,  \quad \text{at $t=0$} \\
			=& \int_{\Sigma} |\nabla_{\Sigma} \phi|^2u  -u\phi^2(|\sff_{\Sigma}|^2+\Ric_N(\nu_{\Sigma},\nu_{\Sigma}))+ \phi^2 \nabla^2 u(\nu_{\Sigma},\nu_{\Sigma})+\phi_t \langle\nabla u,\p_t \nu_{\Sigma_t}\rangle\\
			&-\int_{\Sigma} \phi^2(\nabla_{ \nu_{\Sigma}}(hu)-H_{\Sigma}\nabla_{ \nu_{\Sigma}}u)+\int_{\Sigma} \langle \nabla_{\Sigma} u, \nabla_{\Sigma} \phi \rangle \phi   -\int_{\partial \Sigma} u\phi \nabla_{\nu_{\partial \Sigma}} \phi\\
			=&\int_{\Sigma} |\nabla_{\Sigma} \phi|^2u  -u\phi^2(|\sff_{\Sigma}|^2+\Ric_N(\nu_{\Sigma},\nu_{\Sigma}))+ \phi^2 (\Delta_N u-\Delta_{\Sigma}u)-\phi^2\nabla_{\nu_{\Sigma}} (hu)\\
			&+ \int_{\Sigma} \phi_t \langle\nabla u,\p_t \nu_{\Sigma_t}\rangle +\langle \nabla_{\Sigma} u, \nabla_{\Sigma} \phi \rangle \phi   -\int_{\partial \Sigma} u\phi \nabla_{\nu_{\partial \Sigma}} \phi
		\end{align*}
		Now we use that for a family of evolution of hypersurfaces using a normal vector field we have $-\nabla_{\Sigma} \phi=\partial_t \nu_{\Sigma}$ and from the Gauss equation we have $R_N=R_{\Sigma}+2\Ric(\nu,\nu)+|\sff|^2-H^2_{\Sigma}$ to get,
		\begin{align*}
			&\frac{d^2}{dt^2}\Big\vert_{t=0}(\mathcal{A}(\varphi_t(\Omega)))\\
			=& \int_{\Sigma} |\nabla_{\Sigma} \phi|^2u  -\frac{u\phi^2}{2}(R_{N}-R_{\Sigma}+|\sff|^2+H^2_{\Sigma})+ \phi^2 (\Delta_N u-\Delta_{\Sigma}u-\nabla_{\nu_{\Sigma}} (hu))\\
			&-\int_{\partial \Sigma} u\phi \langle \nabla_{\Sigma}\phi, \nu_{\partial \Sigma}\rangle\\
			=&\int_{\Sigma} |\nabla_{\Sigma} \phi|^2u  -\frac{u\phi^2}{2}(R_{N}-R_{\Sigma}+|\sff|^2+H^2_{\Sigma})+ \phi^2 (\Delta_N u-\Delta_{\Sigma}u-\nabla_{\nu_{\Sigma}} (hu))\\
			&+ \int_{\partial \Sigma} u\phi \langle \partial_t \nu_{\Sigma}, \nu_{\p \Sigma} \rangle ,\quad \text{at } t=0, \\
			=&\int_{\Sigma} |\nabla_{\Sigma} \phi|^2u  -\frac{u\phi^2}{2}(R_{N}-R_{\Sigma}+|\sff|^2+H^2_{\Sigma})+ \phi^2 (\Delta_N u-\Delta_{\Sigma}u-\nabla_{\nu_{\Sigma}} (hu))\\
			& + \int_{\partial \Sigma} u  \langle \nabla_{X_t} X_t, \nu_{\p \Sigma} \rangle ,\quad \text{at } t=0, \\
			=&\int_{\Sigma} |\nabla_{\Sigma} \phi|^2u  -\frac{u\phi^2}{2}(R_{N}-R_{\Sigma}+|\sff|^2+H^2_{\Sigma})+ \phi^2 (\Delta_N u-\Delta_{\Sigma}u-\nabla_{\nu_{\Sigma}} (hu))\\
			& - \int_{\partial \Sigma} u  \phi^2 A(\nu_{\Sigma},\nu_{\Sigma})
		\end{align*}
		
		We now write $|\sff|^2=|\mathring{\sff}|^2+\frac{H^2_{\Sigma}}{2}\geq  \frac{H^2_{\Sigma}}{2}$ 
		and notice that according to the first variation and $u>0$ we have $\frac{u^{-1}}{2}(uH_{\Sigma})^2=\frac{u^{-1}}{2}(\nabla_{\nu_{\Sigma}}u)^2+\frac{h^2u}{2}-h\nabla_{\nu_{\Sigma}}u$, so in total:

		\begin{align*}
			0\leq &\frac{d^2}{dt^2}\Big\vert_{t=0}(\mathcal{A}(\varphi_t(\Omega)))\\
			\leq& \int_{\Sigma} |\nabla \phi|^2u  -\frac{u\phi^2}{2}(R_{N}-R_{\Sigma})+\phi^2(-\frac{3H^2_{\Sigma}}{4}u+ \Delta_N u-\Delta_{\Sigma}u-\nabla_{\nu_{\Sigma}} (hu))\\
			& -\int_{\partial \Sigma} u  \phi^2 A(\nu_{\Sigma},\nu_{\Sigma})\\
			\leq & \int_{\Sigma} |\nabla \phi|^2u  -\frac{u\phi^2}{2}(R_{N}-R_{\Sigma})+\phi^2(\Delta_N u-\Delta_{\Sigma}u-u\nabla_{\nu_{\Sigma}} h-\frac{h^2u}{2}-\frac{u^{-1}}{2}(\nabla_{\nu_{\Sigma}}u)^2)\\
			& -\int_{\partial \Sigma} u  \phi^2 A(\nu_{\Sigma},\nu_{\Sigma})\\
		\end{align*}
		
	\end{proof}

	Combining the second variation of free boundary minmal hypersurface and that of $\mu-$bubble, we can produce a diameter bound as follows (see \cite{ChodoLiStryk2022-CompleteStableMinimal} for the case without boundary).

	\begin{theorem} \label{width}
		Consider $(X^4,\p X)$ a complete manifold with scalar curvature $R\geq 2, H_{\partial X \geq 0}$, and $(M,\p M) \hookrightarrow (X,\p X)$ a two-sided stable immersed free boundary minimal hypersurface.
		Let $N$ be a component of $\overline{M \setminus K}$ for some compact set $K$, with $\p N =\p_0 N \cup \p_1 N, \p_0 N \subset \p M$ and $\p_1 N \subset K$. If there is $p\in N$ with $d_N(p,\p_1 N)>2\pi,$ then we can find a Caccioppoli set $\Omega \subset B_{2\pi}(\p_1N)$ whose reduced boundary is smooth, so that any component $\Sigma$ of the reduced boundary $\p^{*}\Omega$ will have diameter at most $2\pi$ and intersect with $\p_0 N$ orthogonally.
	\end{theorem}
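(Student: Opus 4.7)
The plan is to adapt the warped $\mu$-bubble construction of Chodosh-Li-Stryker \cite{ChodoLiStryk2022-CompleteStableMinimal} to the free-boundary setting. Smooth the distance function $d_N(\cdot, \partial_1 N)$ to a function $\rho$ with $|\nabla \rho| \leq 1 + \epsilon$, and by Theorem \ref{IntersectionAngle} pick a regular value $s$ slightly less than $2\pi$ so that $\{\rho = s\}$ meets $\partial_0 N$ transversally at a constant angle in $(0, \pi/8)$; this level set serves as $\partial_- N$, with $\partial_+ N := \partial_1 N$ and initial set $\Omega_0 := \{\rho < s/2\}$. Prescribe a warping $h = h(\rho)$ with $h \to \pm\infty$ at $\partial_{\mp} N$ satisfying an ODE tuned to the scalar curvature bound (the canonical choice $h(\rho) = \cot(\rho/2)$ gives $h' + h^2/2 \equiv -1/2$). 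For the positive weight $u$ in the functional $\mathcal{A}$, take the first Robin eigenfunction of the stability operator $-\Delta - |\sff|^2 - \Ric_X(\nu,\nu)$ on a slightly larger domain, with boundary condition $\nabla_{n_{\partial M}} u = A(\eta,\eta) u$ coming from free-boundary stability; since $M$ is stable, $\lambda_1 \geq 0$, so $u > 0$ is uniformly bounded above and below and satisfies $\Delta_N u \leq -(|\sff|^2 + \Ric_X(\nu,\nu))u$ in the interior. Apply the existence lemma of Section \ref{mububble} to obtain a minimizer $\Omega \subset B_{2\pi}(\partial_1 N)$ of $\mathcal{A}$ whose reduced boundary is smooth and meets $\partial_0 N$ orthogonally.

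Fix a component $\Sigma \subset \partial^*\Omega \setminus \partial N$ and insert $\phi \equiv 1$ into the second variation formula of Theorem \ref{VarForm}. The key algebraic reduction combines Gauss's equation $R_X = R_N + 2\Ric_X(\nu,\nu) + |\sff|^2$ (minimal), the hypothesis $R_X \geq 2$, and the supersolution property for $u$; these together collapse $-\tfrac{u}{2} R_N + \Delta_N u$ into $\leq -u - \tfrac{u |\sff|^2}{2}$. Integrating $-\Delta_\Sigma u$ by parts on $\Sigma$ uses the geometric identification that, since $\Sigma \perp \partial M$, the conormal $\nu_{\partial \Sigma}$ agrees with the outward normal $n_{\partial M}$ of $\partial M$ in $M$, which by the free-boundary condition equals $\eta_{\partial X}$; so the Robin datum for $u$ converts this boundary integral into $-\int_{\partial \Sigma} A(\eta, \eta) u$. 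Along $\partial \Sigma$ the frame $\{\tau, \nu_\Sigma, \eta\}$ is orthonormal in $T\partial X$, so $H_{\partial X} = A(\tau,\tau) + A(\nu_\Sigma,\nu_\Sigma) + A(\eta,\eta) \geq 0$, and a direct computation in the paper's sign convention gives the geodesic curvature $\kappa_g = -A(\tau, \tau)$; the mean-convex hypothesis thereby controls the combined boundary term by $\int_{\partial \Sigma} u\, \kappa_g$. In the interior, the ODE for $h$ together with $|\nabla_{\nu_\Sigma} \rho| \leq 1$ yields $\nabla_{\nu_\Sigma} h + h^2/2 \geq -1/2$, which is absorbed by the $+u$ coming from $R_X \geq 2$.

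The resulting integrated inequality takes the form
\[
\int_\Sigma \frac{u R_\Sigma}{2} \geq \int_\Sigma \frac{u}{2} + \int_{\partial \Sigma} u\, \kappa_g + (\text{nonnegative terms}).
\]
To extract the diameter bound, follow the conformal rescaling trick of \cite{ChodoLiStryk2022-CompleteStableMinimal}: rescaling $g_\Sigma$ by an appropriate power of $u$ produces a metric on $\Sigma$ whose Gauss curvature is bounded below by a positive constant, whence a Bonnet-Myers argument together with the uniform two-sided bounds on $u$ yields $\mathrm{diam}(\Sigma) \leq 2\pi$. The main obstacle will be the free-boundary bookkeeping: the conormal identification $\nu_{\partial \Sigma} = \eta_{\partial X}$ that matches the Robin datum for $u$ to the boundary integration by parts on $\Sigma$, the exploitation of $H_{\partial X} \geq 0$ through the identity $\kappa_g = -A(\tau, \tau)$ (which is precisely why mean convexity of $\partial X$ suffices here rather than full convexity), and justifying regularity and integration by parts at the corners where $\partial_- N$ meets $\partial_0 N$ at the prescribed small angle.
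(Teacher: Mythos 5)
Your setup phase closely matches the paper: you mollify $d(\cdot,\partial_1 N)$, choose the level set $\partial_- N$ to be a regular value with the small intersection angle from Theorem \ref{IntersectionAngle}, use the warping $h=\cot(\rho_0/2)$ (the paper writes the same function as $-\tan(\rho_0/2-\pi/2)$), construct the positive weight $u$ as a limit of solutions to the Robin problem for the stability operator on a compact exhaustion, minimize $\mathcal{A}$, and insert Theorem \ref{VarForm}. The identification of the conormal $\nu_{\partial\Sigma}$ with $\eta_{\partial X}$ at the orthogonal free boundary and the use of $H_{\partial X}\geq 0$ via the frame $\{\tau,\nu_\Sigma,\eta\}$ are also the right geometric observations.

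The genuine gap is in how you close the argument. You propose to take $\phi\equiv 1$, conformally rescale $g_\Sigma$ by a power of $u$, get positive Gauss curvature, and then apply Bonnet--Myers ``together with the uniform two-sided bounds on $u$.'' This does not work as stated. First, there are no uniform two-sided bounds on $u$: the paper only normalizes $u|_{\partial_1 N}=1$ and produces a positive solution via Harnack, with no lower bound away from the compact exhaustion and no upper bound claimed. Second, even with bounds, a conformal change plus Bonnet--Myers estimates the diameter in the rescaled metric, and converting back costs a factor depending on $\sup u/\inf u$ --- so the sharp constant $2\pi$ is not recoverable this way. Third, taking $\phi\equiv 1$ throws away the freedom needed for the warped comparison. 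The paper's route is different and essential: after the $\mu$-bubble second variation, it constructs a \emph{second} positive function $w$ on $\Sigma$ solving a Robin problem
\[
\dive(u\nabla_\Sigma w)+\bigl(\tfrac12-K_\Sigma\bigr)uw+w\,\Delta_\Sigma u=0,\qquad \langle\nabla_\Sigma w,\eta\rangle=A(\nu_\Sigma,\nu_\Sigma)w\ \text{on}\ \partial\Sigma,
\]
sets $f=uw$, and derives the scalar differential inequality $\Delta_\Sigma f\leq -(\tfrac12-K_\Sigma)f+\tfrac{1}{2f}|\nabla_\Sigma f|^2$ together with the boundary inequality $\partial_\eta f=(A(\nu_\Sigma,\nu_\Sigma)+A(\nu_N,\nu_N))f=(H_{\partial X}-A(\tau,\tau))f\geq -k_{\partial\Sigma}f$. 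The sharp diameter bound then comes from the free-boundary generalization of Chodosh--Li's Lemma 17, proved in the paper's Appendix by a direct argument with free-boundary curves in $\Sigma$, not by Bonnet--Myers on a conformally changed metric. (Also note the sign: the paper uses $A(\tau,\tau)=k_{\partial\Sigma}$, not $\kappa_g=-A(\tau,\tau)$; this sign is what makes $H_{\partial X}\geq0$ give the correct inequality $\partial_\eta f\geq -k_{\partial\Sigma}f$.) Your proposal is therefore missing the construction of $w$, the formation of $f=uw$, and the precise free-boundary diameter lemma; without these, the sharp $2\pi$ bound does not follow.
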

	
	\begin{remark}
		For convenience we also assume $\partial_1 N\cap \partial_0 N$ at angle $\theta \in (0,\pi/8)$ within $N$ due to similar regularity considerations as in secition \ref{pnp}. This  can be arranged by purturbing $N$ near an arbitrary small neighborhood, so will not influence the final bound for the diameter.
	\end{remark}

	\begin{proof}

		We again use $\sff$ for $N\hookrightarrow X $ and $A$ for $\partial X \hookrightarrow X$. We write  $\nu$ for the outward normal of $\partial N \subset N$ (the same for $\partial X \subset X$). 
		For any variation $\varphi_t$ of $(N,\partial N)$ compactly supported away from $\partial_1 N,$ writing $\frac{d}{dt} \big|_{t=0} \varphi_t= f\nu_{N} $, with $\nu_N$  a unit normal of $N \hookrightarrow X,$ we have by the second variation formula for stable free boundary minimal hypersurfaces:

		\begin{equation*}
			0\leq \frac{d^2}{dt^2}\big|_{t=0} \text{Area}(\varphi_t(N))=\int_N |\nabla_N f|^2 -(|\sff|^2+\Ric(\nu_N,\nu_N) )f^2-\int_{\partial_0 N} A(\nu_N,\nu_N)f^2.
		\end{equation*}

		Integration by parts gives us,
		\begin{equation*}
			0\leq \int_N-(f\Delta_N f+|\sff|^2f^2+\Ric(\nu_N,\nu_N)f^2) + \int_{\partial_0 N} f(\nabla_{\nu}f-A(\nu_N,\nu_N)f).
		\end{equation*}

		Using Lemma \ref{appen1} in Appendix, we can find a positive solution $u\in C^2(N)$ to $$\Delta_N u+(|\sff|^2+\Ric(\nu_N,\nu_N))u=0, \quad \nabla_{\nu}u-A(\nu_N,\nu_N)u=0 \text{ along } \partial_0 N.$$

		Now we follow Chodosh-Li-Stryer \cite{ChodoLiStryk2022-CompleteStableMinimal} and apply the free boundary $\mu$ bubble to the above $u$ and a proper $h$. 

		Consider a mollification of $d(\cdot,\partial_1 N)$ with Lipschitz constant less than 2, denoted as $\rho_0,$ 
		we may assume that $\rho_0(x)=0$ for all $x\in \partial_1 N$, and the level set $\{\rho_0(x)=2\pi\}$ is a smooth submanifold in $N$.
%		Choose $\epsilon \in (0,1/2)$ and that $\epsilon, 8\pi+2\epsilon$ are regular values of $\rho_0,$ we define $\rho$ with Lipschitz constant less than $1/4$,
%		\begin{equation*}
%			\rho=\frac{\rho_0-\epsilon}{8\pi+\epsilon/\pi}-\frac{\pi}{2.}
%		\end{equation*}

		Define $\Omega_1:=\{x\in N,0  <\rho_0<2\pi\}$, $\Omega_0:=\{0  <\rho_0<\pi\}$, and set $$h(x):=-\tan(\frac{1}{2}\rho_0(x)-\frac{\pi}{2})=-\tan(\rho(x)).$$ We solve the $\mu-$bubble problem among Caccioppoli sets whose symmetric difference with $\Omega_0$ is compact in $\Omega_1,$ i.e. we minimize the functional $\mathcal{A}(\Omega)$ using the given $h$ and $u>0$ obtained above.  We obtain a minimizer $\Omega$,  and for any component $\Sigma$ of $\partial^{*} \Omega$, we have $\partial \Sigma \cap \partial_0 N$ orthogonally and from the second variation formula in Theorem \ref{VarForm} we get for any compactly supported smooth function $\phi$ on $\Sigma$ (Lemma 15 of \cite{ChodoLi2020-GeneralizedSoapBubbles}),
		\begin{align*}
			0 \leq &\int_{\Sigma} |\nabla_{\Sigma} \phi|^2u - \frac{1}{2}(R_N-1-R_{\Sigma})\phi^2 u +(\Delta_N u-\Delta_{\Sigma}u) \phi^2 -\frac{1}{2u} (\nabla_{\nu_{\Sigma}}u)^2 \phi^2 \\
			&\int_{\Sigma} -\frac{1}{2}(1+h^2+2\nabla_{\nu_{\Sigma}}h)\phi^2 u-\int_{\partial \Sigma} A (\nu_{\Sigma},\nu_{\Sigma}) \phi^2 u,
		\end{align*}
		and now we have that $1+h^2+2\nabla_{\nu_{\Sigma}}h \geq 1+\tan^2(\rho)-\sec^2(\rho) =1-1=0.$ So in total we have:
		\begin{align*}
			0 \leq &\int_{\Sigma} |\nabla_{\Sigma} \phi|^2u - \frac{1}{2}(R_N-1-R_{\Sigma})\phi^2 u +(\Delta_N u-\Delta_{\Sigma}u) \phi^2 -\frac{1}{2u} (\nabla_{\nu_{\Sigma}}u)^2 \phi^2 \\
			&-\int_{\partial \Sigma} A(\nu_{\Sigma},\nu_{\Sigma}) \phi^2 u.
		\end{align*}

		We can plug in the equation (\ref{eigen1}) for $u$, using $R_g \geq 2$ and Gauss Equation $R_X=R_N +2\Ric_X(\nu_N,\nu_N)+|\sff_N|^2-H^2_N$ to get:
		\begin{align*}
			0 \leq &\int_\Sigma |\nabla_\Sigma \phi|^2 u -\frac{1}{2}(1-R_{\Sigma})\phi^2u-\Delta_{\Sigma}u \phi^2 -\frac{1}{2u} (\nabla_{\nu_{\Sigma}}u)^2 \phi^2 -\int_{\partial \Sigma} A (\nu_{\Sigma},\nu_{\Sigma}) \phi^2 u \\
			\leq & \int_{\Sigma} |\nabla_{\Sigma} \phi|^2 u -(\frac{1}{2}-K_{\Sigma}) \phi^2 u -\Delta_{\Sigma} u \phi^2 -\int_{\partial \Sigma} A (\nu_{\Sigma},\nu_{\Sigma}) \phi^2 u\\
			\leq & \int_{\Sigma} -\dive(u\nabla_{\Sigma} \phi) \phi -(\frac{1}{2}-K_{\Sigma})  \phi^2 u  -\Delta_{\Sigma} u \phi^2 - \int_{\partial \Sigma} (A (\nu_{\Sigma},\nu_{\Sigma}) \phi-\langle \nabla_{\Sigma}\phi, \eta\rangle) \phi u
		\end{align*}

		By the same argument we used above to obtain $u,$ we can find a function  $w$ with $A (\nu_{\Sigma},\nu_{\Sigma})w-\langle \nabla_{\Sigma}w, \eta\rangle=0$ so that on $\Sigma$,
		\begin{equation*}
			\dive(u\nabla_{\Sigma} w)  +(\frac{1}{2}-K_{\Sigma})uw+ w\Delta_{\Sigma} u = 0
		\end{equation*}

		We let $f=uw$ and by combining the equation above we have over $\Sigma$: 
		\begin{align*}
			\Delta_{\Sigma} f & = w\Delta_{\Sigma} u +\dive_{\Sigma}(u\nabla_{\Sigma} w)+\nabla_{\Sigma} u \cdot \nabla_{\Sigma} w \\
			&= -(\frac{1}{2}-K_{\Sigma})uw+ \nabla u \cdot \nabla w \\
			&= -(\frac{1}{2}-K_{\Sigma})uw+ \frac{1}{2uw}(|\nabla f|^2-u|\nabla w|^2-w|\nabla u|^2)\\
			&\leq -(\frac{1}{2}-K_{\Sigma}) f+\frac{1}{2f} |\nabla f|^2.
		\end{align*}
		 
	Lemma 17 in Chodosh-Li \cite{ChodoLi2020-GeneralizedSoapBubbles} also holds under the following condition (a short proof is obtained in the Appendix, see Lemma \ref{appen2}):
		 \begin{equation*}
		 	\partial_{\eta}f=u\partial_{\eta}w+w\partial_{\eta}u=A(\nu_{\Sigma},\nu_{\Sigma})uw+A(\nu_N,\nu_N)uw=H_{\partial X}f-k_{\partial\Sigma}f\geq -k_{\partial \Sigma}f.
		 \end{equation*}
		 So diam$(\Sigma) \leq 2\pi$.
	\end{proof}

	\begin{theorem}[Almost Linear Growth of An End] \label{linearvol}
		Let $(X^4, \partial X)$ be a complete manifold with weakly bounded geometry, $H_{\partial X}\geq 0, \Ric_2 \geq 0$ and $R_g \geq 2.$ Let $(M^3,\partial M) \hookrightarrow (X^4,\partial X)$ be a complete simply connected two-sided stable free boundary minimal immersion. Let $(E_k)_{k \in \N}$ be an end of $M$ given by $E_k =M \setminus B_{kL}(x)$ for some fixed point $x \in M$ and let $M_k:= E_k \cap \overline{B_{(k+1)L}(x)},$ here $L=20\pi$ (determined by the constant in the lemma above). Then there is a constant $C_0=C(X,L)$ and $k_0$ such that for $k \geq k_0$, 
		\begin{equation*}
			\text{Vol}_M(M_k) \leq C_0.
		\end{equation*} 
	\end{theorem}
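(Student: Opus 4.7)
The plan is to apply the free boundary $\mu$-bubble construction (Theorem \ref{width}) inside each $E_k$ to produce a cross-sectional surface of diameter at most $2\pi$, show that every point of $M_k$ lies within intrinsic distance $L$ of this surface, and then use the local volume control of Lemma \ref{WBGvol} to cover $M_k$ by finitely many balls of fixed radius. Concretely, for each $k$ I would first (after perturbing $\p_1 E_k$ via Theorem \ref{IntersectionAngle} to meet $\p M$ at a small angle) apply Theorem \ref{width} with $N = E_k$. Since $L = 20\pi$, the hypothesis $d_N(p,\p_1 N) > 2\pi$ holds for every $p \in M_k$, so one obtains a Caccioppoli set $\Omega_k \subset B^{E_k}_{2\pi}(\p_1 E_k)$ whose reduced boundary components $\Sigma_k^{(j)}$ in the interior of $E_k$ each have diameter at most $2\pi$ and meet $\p_0 E_k$ orthogonally. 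Because the barrier $h$ diverges to $+\infty$ toward $\p_1 E_k$ in the $\mu$-bubble setup, $\Omega_k$ contains a collar of $\p_1 E_k$, so $\Sigma_k := \bigcup_j \Sigma_k^{(j)}$ genuinely separates $\p_1 E_k$ from infinity inside $E_k$.

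Next, for any $q \in M_k \setminus \Omega_k$, I would take a minimizing path $\gamma$ in $M$ from $q$ to the center $x$. Since $q \notin \Omega_k$ while $x \notin E_k$, and $\Omega_k$ contains a collar of $\p_1 E_k$, $\gamma$ must cross $\Sigma_k$ at a first point $q'$; since $q' \in E_k$ one has $d_M(q',x) \geq kL$. Combined with $d_M(q,x) \leq (k+1)L$, this yields $d_M(q,q') \leq L$. Points of $M_k \cap \Omega_k$ lie within $2\pi$ of $\p_1 E_k$ and, since $\Omega_k$ is a collar bounded by $\Sigma_k$, also within $2\pi$ of $\Sigma_k$. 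Consequently every $q \in M_k$ sits within distance $L + 2\pi$ of some component $\Sigma_k^{(j)}$. Since $\Sigma_k^{(j)}$ has diameter at most $2\pi$, it is contained in a single ball $B^M_{2\pi}(p_k^{(j)})$, so the associated portion of $M_k$ is covered by $B^M_{L + 4\pi}(p_k^{(j)})$, which by Lemma \ref{WBGvol} has volume at most $C(L + 4\pi, Q)$.

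The main obstacle I anticipate is bounding the number of components of $\Sigma_k$ uniformly in $k$. The key ingredient should be Theorem \ref{1npend} together with the simple connectedness of $M$: for $k \geq k_0$ sufficiently large, only one component of $E_k$ is non-parabolic, and the separation property together with connectedness forces it to contribute a single cross-sectional component; parabolic components of $E_k$ are either already swallowed by $B_{(k+1)L}(x)$ (hence contribute nothing to $M_k$) or contribute only a bounded number of additional cross-sections, controlled by the uniform comparison $\mathcal{A}(\Omega_k) \leq \mathcal{A}(\Omega_0)$ and a definite lower area bound on any non-trivial component of $\Sigma_k$ (coming from the uniform curvature estimates of Lemma \ref{blowup}). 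Given such a uniform bound $N_0$ on the number of components, the required estimate
\begin{equation*}
	\text{Vol}_M(M_k) \leq N_0 \cdot C(L + 4\pi, Q) =: C_0
\end{equation*}
follows at once.
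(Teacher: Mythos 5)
Your overall architecture (apply Theorem~\ref{width} to $E_k$, use diameter control of the cross-section, invoke Lemma~\ref{WBGvol}) is the right one, but you have correctly identified a real gap in your own argument, and your proposed repair is not the route the paper takes and is unlikely to close the gap as stated. The paper does not need to bound the number of components of $\partial\Omega_k$ at all. Instead it first uses simple connectedness of $M$ (via \cite{ChodoLiStryk2022-CompleteStableMinimal} Prop.~3.2) to show $M_k$ is \emph{connected} for all $k\geq k_0$, and then (via \cite{ChodoLiStryk2022-CompleteStableMinimal} Lemma~5.4) that a \emph{single} component $\Sigma_k$ of $\partial\Omega_k$ separates $\partial E_k$ from $\partial E_{k+1}$. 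With that, for any two points $z_1,z_2\in M_k$ one takes minimizing geodesics from $x$ to each $z_i$; each must cross $\Sigma_k$ at some $y_i$, and because $z_i,y_i$ both lie in $\overline{B_{(k+1)L}(x)}\setminus B_{kL}(x)$ while the geodesic emanates from $x$, one gets $d(y_i,z_i)\leq 2L$. Combined with $\mathrm{diam}(\Sigma_k)\leq c=2\pi$, this gives $\mathrm{diam}(M_k)\leq 4L+c$, so $M_k$ is swallowed by one ball $B^M_{4L+c}(p)$ and Lemma~\ref{WBGvol} applies directly. No counting of components is needed.

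Your proposed fix — uniform comparison $\mathcal{A}(\Omega_k)\leq\mathcal{A}(\Omega_0)$ plus a per-component lower area bound — is dubious as stated: the functional $\mathcal{A}$ is not pure perimeter (it has the bulk term $\int(\chi_\Omega-\chi_{\Omega_0})hu$), the reference set $\Omega_0$ and the eigenfunction $u$ change with $k$, so the comparison constant is not manifestly $k$-uniform, and $u$ can decay, so a lower bound on perimeter components measured with weight $u$ does not obviously translate into an unweighted area lower bound. You would need substantial additional work to make that go through. The cleaner fix, and the one the paper uses, is to establish connectedness of $M_k$ and pass to a diameter bound for $M_k$ itself rather than covering it piecewise.
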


	\begin{proof}
		The proof that there is a large $k_0$ so that for all $k\geq k_0$, $M_k$ is connected is the same as \cite{ChodoLiStryk2022-CompleteStableMinimal} Proposition 3.2 (this uses the simply-connectedness). For each $E_k$ we can purturb the boundary so that it intersects with $\partial M$ with an interior angle $\theta \in (0,\pi/8)$ and we can apply Theorem (\ref{width}) to $E_k \hookrightarrow X$, so we obtain $\Omega_k \subset B_{\frac{L}{2}}(\partial E_k)$. Also with the same proof as \cite{ChodoLiStryk2022-CompleteStableMinimal} Lemma 5.4, there is some component $\Sigma_k$ of $\partial \Omega_k$ that separates $\partial E_k$ and $\partial E_{k+1}$, then Theorem (\ref{width}) implies that diam($\Sigma_k$) $\leq c $ for $(c=2\pi)$ and diam$(M_k)  \leq 4L+c.$ We can show this last inequality by taking any two points $z_1, z_2$ in $M_k$, for each $z_i$ there is a minimizing path connecting $x$ and $z_i$ and intersecting $\Sigma_k$ at some point $y_i$, the arc connecting $y_i,z_i$ is at most $2L$ and combining with diam$(\Sigma_k) \leq c$ we get $d(z_1,z_2) \leq 4L+c$.

		Now by curvature estimates Lemma \ref{blowup} we can apply the volume control Lemma \ref{WBGvol}, to get a constant $C_0=C(X,g, L, c)$ such that,
		\begin{equation*}
			\text{Vol}(B_{4L+c}(p)) \leq C_0,
		\end{equation*}
		for all $p \in M$. Since diam$(M_k)\leq 4L+c$, we get Vol$(M_k) \leq C_0$ as desired.
	\end{proof}

	\section{Proof of Main Theorem and Necessity of Convexity Assumption}

	Now we are ready to prove the main theorem. We first explain some set up. 
	
	We first assume $M$ is simply connected and has infinte volume (otherwise the proof is the same as assuming $M$ is compact as described in the introduction), and by section \ref{1NonparaEnd} we know $M$ has at most 1 nonparabolic end $(E_k)_{k \in \N}$ which we can apply Theorem \ref{linearvol} to obtain $M_k$ and $k_0, L,c$ following the notation in Theorem (\ref{linearvol}). 

	We write $M$ as a decomposition of the following components, fixing $x \in M$ and write $B_R(x)$ as $B_R$, 
	\begin{align*}
		M &=\overline{B_{k_0 L}} \cup E_{k_0} \cup (M\setminus (B_{k_0L} \cup E_{k_0}))\\
		&=:\overline{B_{k_0 L}} \cup E_{k_0} \cup P_{k_0}\\
	\end{align*}

	We also have inductively,for each $i\geq 1$:
	\begin{align*}
		E_{k_0}&=M_{k_0} \cup P_{k_0+1} \cup E_{k_0+1}\\
		&=M_{k_0} \cup P_{k_0+1} \cup (M_{k_0+1} \cup P_{k_0+2}  \cup E_{k_0+2})\\
		&=\left(\bigcup \limits_{k=k_0}^{k_0+i-1} M_k \right)\cup \left(\bigcup \limits_{k=k_0+1}^{k_0+i} P_k\right) \cup E_{k_0+i}
	\end{align*}
	where each $P_k$ when $k > k_0$ is defined as $E_{k} \setminus (E_{k+1} \cup B_{(k+1)L})$, and each component of $P_k$ for $k \geq k_0$ is parabolic.

	We restate the main theorem for convenience of reader:

	\begin{theorem}\label{ThmMain}
		Let $(X^4, \partial X)$ be complete  with $R_g \geq 2$, $Ric_2 \geq 0$, weakly bounded geometry and weakly convex boundary. Then any complete stable two-sided free boundary minimal hypersurface $(M^3,\partial M)$ is totally geodesic and $Ric(\eta,\eta)=0$ along $M$ and $A(\eta,\eta)=0$ along $\p M$, for $\eta$ a choice of normal bundle over $M$.
	\end{theorem}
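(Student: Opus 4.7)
The plan is to plug a logarithmically decaying cutoff function, adapted to the almost-linear volume growth from Theorem \ref{linearvol}, into the stability inequality, and then exploit the fact that under the given curvature hypotheses every term on the right-hand side is nonnegative.

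As a preliminary reduction I would lift to the universal cover $\pi:\widetilde M \to M$: since $\pi$ is a local isometry, the lifted immersion $\widetilde M \hookrightarrow X$ remains a complete two-sided stable free boundary minimal immersion (all three conditions are local), the ambient $X$ and its curvature bounds are untouched, and the conclusions (totally geodesic, $\Ric(\eta,\eta)=0$, $A(\eta,\eta)=0$) descend from $\widetilde M$ to $M$. Crucially $\widetilde M$ is simply connected, which is the hypothesis of Theorem \ref{linearvol}, while in the compact case $\widetilde M = M$ is still compact and no lift is needed.

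For compact $M$, substituting $\phi\equiv 1$ into the stability inequality gives
\begin{equation*}
\int_M (|\sff|^2+\Ric(\eta,\eta)) + \int_{\partial M} A(\eta,\eta) \leq 0;
\end{equation*}
since $|\sff|^2\geq 0$, since $\Ric_2\geq 0$ forces $\Ric\geq 0$ (Remark 2.4), and since weak convexity of $\partial X$ forces $A\geq 0$ so that $A(\eta,\eta)\geq 0$ along $\partial M$ (the free boundary condition places $\eta$ in $T\partial X$ there), each integrand must vanish and the theorem holds. For noncompact $\widetilde M$, Theorem \ref{linearvol} yields a uniform bound $\operatorname{Vol}(M_k)\leq C_0$ on the annular shells $M_k = \overline{B_{(k+1)L}(x_0)}\setminus B_{kL}(x_0)$. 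I would then construct logarithmic cutoffs $\phi_N$ from a mollified distance $\rho\approx d_M(\cdot,x_0)$, with $\phi_N\equiv 1$ on $B_{k_0L}(x_0)$, $\phi_N = 1-\log(\rho/(k_0L))/\log(N/k_0)$ on the transition annulus $B_{NL}\setminus B_{k_0L}$, and $\phi_N\equiv 0$ outside $B_{NL}(x_0)$, so that $|\nabla \phi_N|^2 \leq 4/(\rho^2\log^2(N/k_0))$ and hence
\begin{equation*}
\int_M |\nabla \phi_N|^2 \leq \sum_{k=k_0}^{N-1}\frac{4 C_0}{k^2L^2\log^2(N/k_0)} \leq \frac{C}{\log^2(N/k_0)} \xrightarrow{N\to\infty} 0.
\end{equation*}
Plugging $\phi_N$ into the stability inequality,
\begin{equation*}
\int_M (|\sff|^2+\Ric(\eta,\eta))\phi_N^2 + \int_{\partial M} A(\eta,\eta)\phi_N^2 \leq \int_M |\nabla \phi_N|^2 \longrightarrow 0,
\end{equation*}
and since both integrands are nonnegative while $\phi_N^2 \nearrow 1$ pointwise, monotone convergence forces each integrand to vanish identically.

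The main obstacle I foresee is the legitimacy of the universal-cover reduction: one must verify that the free boundary structure on $\partial \widetilde M$ lifts properly and that the proof of Theorem \ref{linearvol} genuinely goes through on $\widetilde M$, where the weakly bounded geometry hypothesis is preserved because $X$ itself is unchanged. A secondary technical point is the smoothing of $d_M(\cdot,x_0)$ near the corners $\partial M \cap \partial B_{kL}(x_0)$, needed to make $\phi_N$ Lipschitz with the stated gradient bound and for the level sets to meet $\partial \widetilde M$ transversally; this is precisely the intersection-angle perturbation of Theorem \ref{IntersectionAngle}.
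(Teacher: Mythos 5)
Your universal-cover reduction and the compact case are both fine (stability lifts to covers via the positive Jacobi field characterization, and the free boundary condition and curvature hypotheses are local, so $\widetilde M$ satisfies the hypotheses of Theorem \ref{linearvol}), and in the case when all of $M \setminus B_{k_0 L}$ is the nonparabolic end your logarithmic cutoff would close the argument. However, there is a genuine gap in the noncompact case: you have misidentified the annular shells $M_k$. In Theorem \ref{linearvol} the piece $M_k := E_k \cap \overline{B_{(k+1)L}(x)}$ is only the slice of the annulus lying inside the single nonparabolic end $(E_k)$; it is \emph{not} the full annular region $\overline{B_{(k+1)L}(x)} \setminus B_{kL}(x)$. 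The remaining pieces of $M \setminus K$ at each scale are parabolic components (the sets the paper labels $P_k$), and the $\mu$-bubble diameter/volume machinery of Section \ref{mububble} says nothing about them --- a priori they can have arbitrarily large volume. Consequently your estimate
\begin{equation*}
\int_M |\nabla \phi_N|^2 \leq \sum_{k=k_0}^{N-1}\frac{4 C_0}{k^2L^2\log^2(N/k_0)}
\end{equation*}
is not justified, because $|\nabla \phi_N|$ is supported on the whole annulus, not only on the nonparabolic slices, and you have no volume bound on the parabolic remainder. This is precisely the difficulty that forces the paper to use a hybrid test function: on each nonparabolic shell $M_k$ a linear ramp in the (mollified) distance, exactly as in your construction; but on each parabolic component $P_k$ a harmonic-type function $u_k$ with $\int_{P_k} |\nabla u_k|^2 < 1/i^2$, supplied by Lemma \ref{parabolic0energy}, glued to match the ramp value $\phi(kL)$ on $\partial_1 P_k$. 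To repair your argument you would need to (a) establish that $M$ has at most one nonparabolic end (Theorem \ref{1npend}, which you never invoke), and (b) replace $\phi_N$ on the parabolic pieces by the small-Dirichlet-energy barrier functions; without (b) the gradient integral cannot be shown to vanish.
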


	\begin{proof}
		Following the set up above, fix $x \in M$, $i \geq 1$ and obtain $k_0, L,c, E_k, M_k, P_k$.

		For each $k \geq k_0$, $P_k$ is made of disjoint parabolic components. $P_{k_1}$ and $P_{k_2}$ are also disjoint if $k_1 \neq k_2$. So we can apply Lemma (\ref{parabolic0energy}) to each of these component, and obtain a compactly supported function $u_k$ on each $P_k$, with $\int_{P_k} |\nabla u_{k}|^2 < \frac{1}{i^2}$ and with the boundary condition $u_k \vert_{\partial P_k \setminus \partial M}=1, \nabla_{\nu}u\vert_{\partial M \cap P_k}=0$.

		We let $\rho$ a mollification of the distance function to $x$, with $|\nabla \rho|\leq 2$ and
		\begin{equation*}
			\rho\vert_{\partial E_k} = kL, \rho \vert_{\partial M_k \setminus \partial E_k}=(k+1)L.
		\end{equation*}
		Consider $\phi(x)=\frac{(k_0+i)L-x}{iL},$ then we can define a compactly supported Lipschitz function $f_i$ as follows. When $x\in \overline{M_k}$ for some $k_0 \leq k \leq k_0+i-1$, then $f_i(x)= \phi(\rho(x))$, and when $x\in \overline{P_k}$ for some $k_0 \leq k \leq k_0+i$ we define $f(x)=\phi(kL)u_k.$ One can check that this definition agrees on the intersection, and we can define $f(x)=1$ when $x\in \overline{B_{k_0L}},$ and $f(x)=0$ when $x \in E_{k_0+i}.$ Now we can apply this test function into the stability inequality for free boundary minimal hypersurface, together with $A\geq 0$:
		\begin{align*}
			\int_M (\Ric(\eta,\eta)+|\sff|^2)f_i^2&\leq \int_M |\nabla f_i|^2-\int_{\partial M} A(\eta,\eta)f^2\\
			&\leq\sum_{k=k_0}^{k_0+i-1} \int_{M_k} \phi'(\rho)^2|\nabla \rho|^2+\sum_{k=k_0}^{k_0+i} \phi^2(kL) \int_{P_k} |\nabla u_k|^2  \\
			&\leq \frac{4iC_0}{i^2L^2}+ \frac{i+1}{i^2} \leq \frac{C'}{i} \rightarrow 0 \quad \text{as } i \rightarrow \infty.
		\end{align*}
		Since $f_i \rightarrow 1$ on $M$ as we let $i\rightarrow \infty,$ 
		we get that everywhere on $M$, $\Ric(\eta,\eta)=0$ and $\sff=0$, and $A(\eta,\eta)=0$ along $\p M$. 
	\end{proof}

	We note that until the last step, $A_2 \geq 0$ is sufficient. We now provide a counterexample to Theorem \ref{ThmMain} if one replace $A \geq 0$ by $A_2 \geq 0$.

	Consider $\Sph^4 \subset \R^5$ with induced metric, and any closed curve $\gamma \subset \Sph^4$, we look at the intrinsic neighborhood $X=B_{\epsilon}(\gamma):=\{x \in \Sph^4, d(x,\gamma)\leq \epsilon\}$. We can choose $\gamma$ so that $A_2 \geq 0$ everywhere but $A(e_1,e_1)<0$ for some nonzero $e_1$ at a point in $X$. We can minimize area among all hypersurfaces with (nonempty) boundary and nontrivial homology class contained in $\p X$, then we have a stable free boundary minimal immersion.

\section{Appendix}
\begin{lemma}\label{appen1}
		Assume as in Theorem \ref{width}, we have over $(N,\p N=\p_0 N\cup \p_1 N)$ the following second variation formula for any $\phi\in C^{\infty}_c(\overline{\p N \setminus \p_1 N})$,
		\begin{equation*}
			0\leq \int_N -(\Delta_N \phi+(|\sff|^2+\Ric(\nu_N,\nu_N))\phi)\phi+\int_{\p_0 N}(\nabla_{\nu}\phi-A(\nu_N,\nu_N)\phi)\phi,
		\end{equation*}
		then there is a $C^{2}$ solution to $\Delta_N u+(|\sff|^2+\Ric(\nu_N,\nu_N))u=0$, $\nabla_{\nu} u-A(\nu_N,\nu_N)u=0$ along $\partial_0 N$,  and $u\rvert_{N^{\circ}}>0$. 
\end{lemma}

\begin{proof}
		We denote the first eigenvalue as:
		\begin{equation*}
			\lambda_1(N):=\inf_S \frac{\int_N |\nabla \phi|^2-(|\sff_N|^2 +\Ric(\nu_N,\nu_N))\phi^2-\int_{\p_0 N}A(\nu_N,\nu_N)\phi^2}{\int_N \phi^2},
		\end{equation*}
		where $S=\{\phi \in C^{\infty}_c({ N \setminus \p_1 N}), \phi\neq 0\}$.
		
		Note since each test function $\phi$ is taken to be compactly supported, $\lambda_1(N)\geq 0$ is well-defined by domain monotonicity property for compact sets (that is, $\lambda_1(B_1) > \lambda_1(B_2) $ when $\overline{B_1}\subset B_2 \Subset N $) from Fischer-Colbrie and Schoen \cite{FischSchoe1980-StructureCompleteStable}.

		Following Theorem 6.15 in \cite{GilbaTrudi2001-EllipticPartialDifferential}, we consider the following problem over a compact exhaustion $(\Omega_l)_{l\in \N}$ of $N$, each containing the boundary $\partial_1 N$:
		\begin{align} \label{appen3}
			(\Delta_N +|\sff_N|^2+\Ric(\nu_N,\nu_N))\varphi=0, \quad & \Omega_l^{\circ}  \\
			\nabla_{\nu}\varphi-A(\nu_N,\nu_N)\varphi=0, \quad & \partial_0 N \cap  \Omega_l\nonumber\\ 
			\varphi=1, \quad & \partial^{*} \Omega_l\nonumber\\
			\varphi=0, \quad & \partial_1 N.\nonumber
		\end{align}
		We can make the boundary condition homogeneous by subtracting a function $g_l\in C^3(N)$ satisfying the boundary conditions in (\ref{appen3}), then we claim the following problem has a unique solution $\varphi'=\varphi-g_l\in C^3(N)$, 
	\begin{align} \label{appen4}
			(\Delta_N +|\sff_N|^2+\Ric(\nu_N,\nu_N))\varphi'=g_l', \quad & \Omega_l^{\circ}  \\
			\nabla_{\nu}\varphi-A(\nu_N,\nu_N)\varphi'=0, \quad & \partial_0 N \cap  \Omega_l\nonumber\\ 
			\varphi'=0, \quad & \partial^{*} \Omega_l\nonumber\\
			\varphi'=0, \quad & \partial_1 N,\nonumber
		\end{align}
		with $g'_l:=(\Delta_N+|\sff_N|^2+\Ric(\nu_N,\nu_N))g_l.$

	Indeed, consider the operator $L_l:=\Delta_N+|\sff_N|^2+\Ric(\nu_N,\nu_N)-c_0$ over $\Omega_l$ where $c_0 \gg \max\{\sup_{\Omega_l}(|\sff_N|^2+\Ric(\nu_N,\nu_N)),\sup_{\Omega_l} A(\nu_N,\nu_N),0\}$ and the corresponding bilinear form, 
	\begin{equation*}
		B_l(h,k):=\int_{\Omega_l} \nabla h\cdot \nabla k+(c_0-|\sff_N|^2-\Ric(\nu_N,\nu_N))hk-\int_{\p \Omega_l}A(\nu_N,\nu_N)hk.
	\end{equation*}

	By domain monotonicity, $\lambda_1(N)\geq 0$ implies $\lambda_1(\Omega_l)>0$, so that  $B_l(h,h)\geq \alpha_1\int_{\Omega_l}h^2$ for any $h\in S\cap C^{\infty}_c(\Omega_l)$, for some $\alpha_1 >0$. By the choice of $c_0$ and the multiplicative trace inequality (see \cite{brenner2008mathematical} Theorem 1.6.6), we also have $B_l(h,h)\geq \alpha_2 \int_{\Omega_l}|\nabla h|^2 $ for any $h\in S\cap C^{\infty}_c(\Omega_l)$, for some $\alpha_2 >0$. Together this implies that $B(\cdot, \cdot)$ is coercive in the $W^{1,2}(\Omega_l)$ norm for functions in $S$. Then by Lax-Milgram and Schauder regularity theory, there is a unique solution $f_l\in C^{2,\alpha}(\Omega_l)$ to $L_lf_l=g''_l$ for any $g''_l \in S\cap C^{\infty}_c(\Omega_l)$, with the boundary condition, $\nabla_{\nu_{\p_0 N}}f_l=A(\nu_N,\nu_N)f_l$ along $\p_0 N \cap \Omega_l$. 
	
	We denote $S'=\{f\in C^{2,\alpha}(\Omega_l), f\rvert_{\p_1 N}=0, \nabla_{\nu_{\p_0N}}f=A(\nu_N,\nu_N)f\text{ along } \p_0N\}$ then $L_l$ is invertible from $S'$ to $C^{\alpha}(\Omega_l)$ and the inverse $L^{-1}_l$ is a compact operator from $C^{\alpha}(\Omega_l)$ to $C^{\alpha}(\Omega_l)$. By Theorem 5.3 in \cite{GilbaTrudi2001-EllipticPartialDifferential}, we have that the operator $T:=\text{Id}+c_0L_l^{-1}$ satisfies the Fredholm alternative on the normed vector space $C^{\alpha}(\Omega_l)$.
	
	Now we have,
	\begin{equation*}
		Tu:=(\text{Id}+c_0L^{-1}_l)u=L^{-1}_lf \iff L_N u:=\Delta_N u+(|\sff|^2+\Ric(\nu_N,\nu_N))u=f,
	\end{equation*}
	and note $u=Tu-c_0L^{-1}_lu=L^{-1}_lf-c_0L^{-1}_lu\in S'$.
	
	So using $\lambda_1(\Omega_l)>0$, we have that there is no non-trivial solution to $L_Nu=0$ in $S'$, hence by Fredholm alternative, there is a unique solution to (\ref{appen4}) given $g_l'$ and the claim is proved. We denote the unique solution to (\ref{appen3}) as $v_l$.

	We claim that each $v_l>0$ on $\Omega_l^{\circ}$. By Hopf Lemma (\cite{GilbaTrudi2001-EllipticPartialDifferential} Theorem 3.5), it's enough to show $v_l \geq 0$.

	Note since $v_l\in H^1(\Omega_l)$ and $|\cdot|$ is $1$-Lipschitz, $|v|\in H^1(\Omega_l)$ with $\nabla |v_l|=\text{sign}(v_l)\nabla v_l$ almost everywhere, so both $v_l$ and $|v_l|$ minimizes the first eigenvalue problem and $|v_l|\in C^{2,\alpha}(\Omega_l)$. Now assume $\{x\in \Omega^{\circ}_l, v_l(x) <0\} \neq \emptyset$, then by maximum principle (\cite{GilbaTrudi2001-EllipticPartialDifferential} Theorem 3.5) $|v_l|$ obtain minimum at some point $x_0\in \Omega_l$ with $v_l(x_0)=0$, then $|v_l|\equiv 0$ on $\Omega_l$, a contradiction to the boundary conditions in (\ref{appen3}).

	Now we have that $v_l\rvert_{\Omega^{\circ}_l}>0$, fix a point $p\in \Omega_1^{\circ} \subset N\setminus \p N$,  and denote $u_l(x) :=\frac{v_l(x)}{v_l(p)}$ then we can proceed as in \cite{FischSchoe1980-StructureCompleteStable},  Harnack inequality gives $u_l$ subsequentially converge in $C^{2}_{\loc}(N)$ to a nonzero function on $N,$ with $u>0$ on $N^{\circ}$, $u(p)=1, u\rvert_{\p_1 N}=0$ and,
		\begin{equation}\label{eigen1}
			(\Delta_N +Ric_X(\nu_N,\nu_N)+|\sff_N|^2)u = 0 \quad \text{on } N^{\circ}, \quad \nabla^N_{\nu}u-A(\nu_N,\nu_N)u=0 \quad \text{on } \partial_0 N.
		\end{equation}
\end{proof}

\begin{lemma}\label{appen2}
		If $(\Sigma^2,\partial \Sigma,g)$ is a compact Riemannian manifold with Gauss curvature $K_{\Sigma}$ and,
		\begin{equation}\label{appen}
			\Delta_{\Sigma} \lambda \leq -(K_0-K_{\Sigma})\lambda+\frac{|\nabla_{\Sigma}\lambda|^2}{2\lambda}, \quad\nabla_{\eta}\lambda +k_{\partial \Sigma}\lambda \geq 0
		\end{equation}
		for some smooth $\lambda>0$, $\eta$ the outward unit normal of  $\partial\Sigma \subset \Sigma$, $k_{\partial \Sigma}$ the corresponding geodesic curvature and $K_{0}\in(0,\infty)$. Then $\text{diam}_g\Sigma\leq \sqrt{\frac{2}{K_0}}\pi.$
\end{lemma}

\begin{proof}
	We follow the proof of Lemma 16 and Lemma 17 in \cite{ChodoLi2020-GeneralizedSoapBubbles} and track the sign of the boundary terms carefully. If not, then we can find a free boundary curve $\gamma:[a,b]\rightarrow \Sigma$ with $\partial \gamma\subset \partial \Sigma$ and the following (from Proposition 15 in \cite{ChodoLi2020-GeneralizedSoapBubbles}), take $u=\lambda$ and $\psi^2u=1$,
	\begin{align*}
		0&\leq \int_{\gamma}|\nabla_{\gamma} \psi|^2u-\frac{1}{2}(R_{\Sigma}-2K_0)\psi^2 u+(\Delta_{\Sigma} u-\Delta_{\gamma}u)\psi^2-\frac{1}{2}(2K_0+h^2+2\nabla_{\nu_{\gamma}}h)\psi^2u\\
		&-\int_{\gamma}\frac{|\nabla_{\nu_{\gamma}}u|^2}{2u^2} -\int_{\partial\gamma}\sff_{\partial \Sigma}(\nu_{\gamma},\nu_{\gamma})\psi^2 u \\
		&=\int_{\gamma}\frac{1}{4u^2}|\nabla_{\gamma}u|^2-\frac{1}{2}(R_{\Sigma}-2K_0)+u^{-1}(\Delta_{\Sigma}u-\Delta_{\gamma}u)-\frac{1}{2}(2K_0+h^2+2\nabla_{\nu_{\gamma}}h)\\
		&-\int_{\gamma}\frac{|\nabla_{\nu_{\gamma}}u|^2}{2u^2} -\int_{\partial\gamma}\sff_{\partial \Sigma}(\nu_{\gamma},\nu_{\gamma})\\
		&\stackrel{(\star 1)}{<}\int_{\gamma} \frac{1}{4u^2}|\nabla_{\gamma}u|^2+\frac{|\nabla_{\Sigma}u|^2-|\nabla_{\nu_{\gamma}}u|^2}{2u^2}-u^{-1}\Delta_{\gamma}u-\int_{\partial\gamma}\sff_{\partial \Sigma}(\nu_{\gamma},\nu_{\gamma})\\
		&\stackrel{(\star 2)}{=}\int_{\gamma}\frac{3}{4u^2}|\nabla_{\gamma}u|^2+\nabla_{\gamma}(u^{-1})\nabla_{\gamma}u-\int_{\partial \gamma}u^{-1}\nabla_{\nu_{\partial\gamma}}u+\sff_{\partial \Sigma}(\nu_{\gamma},\nu_{\gamma})\\
		&\stackrel{(\star 3)}{=}\int_{\gamma}\frac{-1}{4u^2}|\nabla_{\gamma}u|^2- \int_{\partial \gamma}u^{-1}\nabla_{\eta}u+k_{\partial \Sigma}\leq  0,
	\end{align*}
	where in $(\star 1)$ we used the assumption (\ref{appen}) and that $(K_0+\frac{1}{2}h^2+\nabla_{\nu_{\gamma}}h)>0$ as in Lemma 16 of \cite{ChodoLi2020-GeneralizedSoapBubbles}; in $(\star 2)$ we used integration by parts; in $(\star 3)$ we used $\nu_{\partial \gamma}=\eta$ by free boundary. The strict inequality gives a contradiction.  
\end{proof}

\bibliographystyle{alpha}
\bibliography{FBMHPSC.bib}

\end{document}